\documentclass[11pt,a4paper]{article}
\usepackage{caption}
\usepackage{epsf,epsfig,amsfonts,amsgen,amsmath,amstext,amsbsy,amsopn,amsthm}
\usepackage{color}
\usepackage{mathrsfs}
\usepackage{tikz}
\usepackage{enumerate}
\usepackage{url}
\usepackage{enumitem}
\usepackage{appendix}
\usepackage{booktabs}
\usepackage{setspace}
\usepackage{comment}
\usepackage{pict2e,color}
\usepackage{tikz}
\usepackage{tkz-graph}
\usetikzlibrary{arrows,shapes,chains}
\usepackage {verbatim}
\usepackage{tikz-cd}
\usepackage{tablefootnote}

\addtolength{\topmargin}{-.7in} \addtolength{\textheight}{1.5in}
\addtolength{\oddsidemargin}{-.8in}
\addtolength{\evensidemargin}{-.8in} \addtolength{\textwidth}{1.7in}

\usepackage{subfigure}
\usepackage[english]{babel}
\usepackage{url}

\newtheorem{thm}{Theorem}

\newtheorem{lem}[thm]{Lemma}

\newtheorem{prop}[thm]{Proposition}

\newtheorem{prob}[thm]{Problem}
\newtheorem{myDef}[thm]{Definition}

\newtheorem{gad}{Good Gadget}

\newcommand{\cp}{\vec{\operatorname{\bf{cp}}}}

\newcommand{\bfS}{\mathbf{S}}
\newcommand{\bfi}{{\bf C_i}}
\newcommand{\bfj}{{\bf C_j}}
\newcommand{\bfl}{{\bf C_\ell}}
\newcommand{\bfm}{{\bf C_m}}

\newcommand{\cH}{\mathcal{H}}

\begin{document}

\title{Discrepancies of perfect matchings in hypergraphs}
\author{
Hongliang Lu\footnote{School of Mathematics and Statistics, Xi'an Jiaotong University, Xi'an, Shaanxi 710049. Research supported by National Key Research and Development Program of China 2023YFA1010200. Email: luhongliang@mail.xjtu.edu.cn}~~~~~~~
Jie Ma\footnote{School of Mathematical Sciences, University of Science and Technology of China, Hefei, Anhui, 230026, China. Research supported by National Key Research and Development Program of China 2023YFA1010200 and National Natural Science Foundation of China grant 12125106. Email: jiema@ustc.edu.cn.}~~~~~~~
Shengjie Xie\footnote{School of Mathematical Sciences, University of Science and Technology of China, Hefei, Anhui, 230026, China. Email: jeff\_776532@mail.ustc.edu.cn.}
}

\date{}

\maketitle

\begin{abstract}
In this paper, we determine the minimum degree threshold of perfect matchings with high discrepancy in $r$-edge-colored $k$-uniform hypergraphs for all $k\geq 3$ and $r\geq 2$, thereby completing the investigation into discrepancies of perfect matchings that has recently attracted significant attention.
Our approach identifies this discrepancy threshold with a novel family of multicolored uniform hypergraphs and reveals new phenomena not covered in previous studies.
In particular, our results address a question of Balogh, Treglown and Z\'arate-Guer\'en concerning 3-uniform hypergraphs.
\end{abstract}



\section{Introduction}	
A {\it hypergraph} $H$ consists of a vertex set $V(H)$ and an edge set $E(H)$ whose members are subsets of $V(H)$.
For a positive integer $k$ and a set $S$, let $\binom{S}{k}:=\{T\subseteq S: |T|=k\}$.
A hypergraph $H$ is {\it $k$-uniform} if $E(H)\subseteq \binom{V(H)}{k}$,
and we often refer to a $k$-uniform hypergraph as a {\it $k$-graph}.
Let $H$ be a $k$-graph and $T\subseteq V(H)$.
The {\it neighbourhood} $N_H(T)$ of $T$ in $H$ denotes the family of subsets $S\subseteq V(H)\backslash T$ such that $T\cup S\in E(H)$.
When $T=\{x\}$, we express $N_{H}(\{x\})$ by $N_H(x)$.
The {\it degree} $d_H(T)$ of $T$ in $H$ is defined as $d_H(T)=|N_H(T)|$.
For an integer $\ell$ with $1\leq \ell\leq k-1$, let $\delta_\ell(H)=
\min\{d_H(T): T\in \binom{V(H)}{\ell}\}$ denote the {\it minimum $\ell$-degree} of $H$.
For simplicity, we often write $\delta_1(H)$ as $\delta(H)$ and term it the {\it minimum vertex degree} of $H$.
Throughout this paper, we often identify $E(H)$ with $H$ when there is no confusion.
A \emph{matching} in a hypergraph $H$ denotes a set of pairwise disjoint edges in $H$.
A matching in $H$  is {\it perfect} if it covers all vertices of $H$.

\subsection{Background}
The study of {\it discrepancy} of hypergraphs $\cH$ is a fundamental subject in combinatorics,
with the aim of estimating the maximum imbalance guaranteed to occur on some edge $e\in E(\cH)$ in every 2-coloring of $V(\cH)$.
For historical context, we reference Chapter 13 of \cite{AS} and Chapter 4 of \cite{M99}.
A natural multicolor extension can be formally described as follows.
For any integer $r\geq 2$, let $\cH$ be a hypergraph and $f: V(\cH)\rightarrow [r]$ be a $r$-coloring of its vertices.
For any color $c\in [r]$ and any edge $e\in E(\cH)$, let $c(e)$ denote the number of vertices in $e$ colored with $c$ under $f$,
and let $D_f(e)=\max_{c\in [r]} \left(c(e)-\frac{|e|}{r}\right)$.
The {\it $r$-color discrepancy} of a hypergraph $\cH$ is then defined as $D_r(\cH)=\min_{f:V(\cH)\to [r]}\max_{e\in E(\cH)} D_f(e).$
Therefore, the $2$-color discrepancy aligns with the initial definition of the discrepancy.

Initially introduced by Erd\H{o}s in the 1960s,
a well-explored discrepancy problem in the context of graphs considers $V(\cH)$ as the edge set of a graph $G$,
with $E(\cH)$ representing a collection of subgraphs of $G$ possessing specific properties such as spanning trees, Hamilton cycles and perfect matchings.
Two notable early results include Erd\H{o}s-Spencer \cite{ES71} on the discrepancy of cliques and Erd\H{o}s-F\"uredi-Loebl-S\'os \cite{EFLS} on the discrepancy of a given spanning tree in complete graphs.
In 2020, Balogh-Csaba-Jing-Pluh\'ar \cite{BCJP20} revisited this problem for general graphs $G$,
and since then, there has been extensive research investigating the $r$-color discrepancy of various properties of subgraphs in both graphs and hypergraphs, including \cite{BCPT,BTZ,Brad,BML,FHLT,FL,GGS,GKM1,GKM2,GKM3,MB23} in the literature.

In these recent studies, a significant focus has been on exploring the minimum ($\ell$-)degree threshold for perfect matchings with high discrepancy in graphs and hypergraphs.\footnote{Here and throughout, high discrepancy in a hypergraph $H$ always means a discrepancy with linear size $\Omega(|V(H)|)$.}
This exploration stems from the original problem of determining the minimum $\ell$-degree threshold for the existence of perfect matchings in uniform hypergraphs, a subject that has garnered considerable interest in recent decades.
Formally speaking, we are given integers $k,\ell,n$ with $1\leq \ell\leq k-1$ and $n\equiv0\pmod k$.
Let $m_\ell(k, n)$ denote the smallest integer $m$ such that every $n$-vertex $k$-graph $H$ with $\delta_\ell(H)\geq m$ contains a perfect matching.
We have the following general asymptotic lower bound on $m_\ell(k, n)$, where the explicit construction of the two families of $k$-graphs can be found in \cite{KOT14,TZ2}:
\begin{equation}\label{equ:f(k)_ell}
\mbox{ By setting } f^\ell(k):=\limsup_{n\rightarrow \infty} \frac{m_\ell(k,n)}{\binom{n-\ell}{k-\ell}}, \mbox{ it holds that } f^\ell(k)\geq \max\left\{\frac12, 1-\left(1-\frac1k\right)^{k-\ell}\right\}.
\end{equation}
For brevity, we abbreviate $m_1(k,n)$ as $m(k,n)$ and $f^1(k)$ as $f(k)$.
Note that the well-known Dirac's theorem implies that $f(2)=1/2$ for graphs.
It is conjectured (see \cite{HPS09,KO09}) that the above lower bound should hold as an equality for all integers $k > \ell \geq 1$.
This conjecture remains open in general, although significant progress has been achieved,
including the confirmation of $f(3) = 5/9$ in \cite{HPS09,Kh13,KOT13}, the confirmation of $f(4) = 1-(3/4)^3$ in \cite{AFH12,K16},
the confirmation of $f(5) = 1 - (4/5)^4$ in \cite{AFH12}, the confirmation of $f^{k-1}(k) = 1/2$ in \cite{KO06,RRS1,RRS2},
as well as the verification of $f^\ell(k) = 1/2$ for $\ell \geq k/2$ in \cite{P08,TZ1} and towards better bounds $\ell\geq 0.4k$ in \cite{Han16,LY22}.
For further discussions, we refer to \cite{KOT14,RR10,Zhao}.

Now we formalize the problem of determining the minimum $\ell$-degree threshold for perfect matchings with high discrepancy in $r$-edge-colored $k$-uniform hypergraphs.
Fix integers $r, k,\ell$ with $r\geq 2$ and $1\leq \ell\leq k-1$.
Let $h_{r}^\ell(k)$ denote the smallest constant $h>0$ satisfying that for any $\epsilon>0$, there exists a constant $\delta=\delta(\epsilon,r,k,\ell)>0$ so that the following holds:
for every $n$-vertex $k$-graph $H$ and every $r$-coloring of $E(H)$, where $n\equiv0\pmod k$ is sufficiently large, if $\delta_\ell(H)\geq (h+\epsilon)\cdot \binom{n-\ell}{k-\ell},$
then there exists a perfect matching of $H$ with at least $\frac{n}{rk}+\delta\cdot n$ edges with the same color.
For brevity, we will write $h_{r}^1(k)$ as $h_{r}(k)$.
It is evident from the definition that
\begin{equation}\label{equ:h>=f}
h_{r}^\ell(k)\geq f^\ell(k) \mbox{ holds for all $r\geq 2$ and $1\leq \ell\leq k-1$.}
\end{equation}
Balogh, Csaba, Jing and Pluh\'ar \cite{BCJP20} first established the minimum degree threshold for perfect matchings with high discrepancy in graphs by showing that $h_2(2)=\frac34$.
Later, this was generalized to the multicolor version in Freschi-Hyde-Lada-Treglown \cite{FHLT} and Gishboliner-Krivelevich-Michaeli \cite{GKM2}, where it was proven that $h_r(2)=\frac{r+1}{2r}$ holds for all $r\geq 2$.
This bound is attained by the following $r$-edge-colored graph $G$: $V(G)$ consists of subsets $V_1,\ldots, V_r$ with $|V_i|=\frac{n}{2r}$ for all $i<r$ and $|V_r|=\frac{(r+1)n}{2r}$;
the edges of $G$ comprises all pairs intersecting with $V_r$, where for each $i\in [r]$ the edges between $V_i$ and $V_r$ are colored by color $i$.
The same problem was also explored in random graphs in \cite{GKM1}.
Recently, Gishboliner, Glock and Sgueglia \cite{GGS} determined the minimum co-degree threshold by deriving from their main result that $h_r^{k-1}(k)=\frac12=f^{k-1}(k)$ holds for all $r\geq 2$ and $k\geq 3$.
Simultaneously and independently, Balogh, Treglown and Z\'arate-Guer\'en \cite{BTZ} demonstrated that $h_r^{\ell}(k)=f^\ell(k)$ holds for all $r\geq 2$ and $2\leq \ell\leq k-1$.
This is remarkable given that generally the exact value of $f^\ell(k)$ remains unknown.
Now we are left with the only remaining cases when $\ell=1$, namely, the minimum vertex degree threshold for perfect matchings with high $r$-color discrepancy in $k$-graphs.
This is summarized in the following problem statement.

\begin{prob}\label{prob}
Determine $h_r(k)$ for all $k\geq 3$ and $r\geq 2$.
\end{prob}

It is worth mentioning that there is a notable difference in the known cases between graphs and hypergraphs.
In graphs, we observe that $h_r(2)>f(2)$ for all $r\geq 2$,
whereas in $k$-uniform hypergraphs for every $k\geq 3$,
all known cases indicate that $h_r^{\ell}(k)=f^\ell(k)$ for all $r\geq 2$, where $2\leq \ell\leq k-1$.
As inquired by Gishboliner, Glock, and Sgueglia \cite{GGS}, it would be interesting to know whether there exists some case in $k$-uniform hypergraphs with $k\geq 3$, where the discrepancy threshold is strictly larger than the corresponding existence threshold (namely, $h_r(k)>f(k)$ in the remaining cases).
In particular, Balogh, Treglown and Z\'arate-Guer\'en \cite{BTZ} considered the discrepancy threshold in 3-graphs.
They observed $h_2(3)\geq \frac{3}{4}$ from the following 3-graph $H$:
The vertex set $V(H)$ consists of two subsets $A$ and $B$, both of size $n/2$,
while $E(H)$ comprises all red-colored edges with two endpoints in $A$ and one endpoint in $B$,
along with all blue-colored edges having one endpoint in $A$ and two endpoints in $B$.
The authors of \cite{BTZ} (see Question 4.2) posed the question of whether $h_2(3)=\frac{3}{4}$ holds true.

\subsection{Our results}
In this paper, we advance and complete the aforementioned line of research by establishing the minimum vertex degree threshold of perfect matchings with high $r$-color discrepancy in $k$-graphs for all $r,k\geq 2$.
To achieve this, we identify this discrepancy threshold using a novel family of $k$-graphs and unveil new phenomena that have not been explored in previous results.

To set the stage for our findings, we initiate our presentation by examining the lower bound with a detailed characterization of the extremal $k$-graphs as follows.
For integers $k\geq 3$ and $r\geq 2$,
let $\mathbb{N}^r_{k-1}$ denote the set consisting of all vectors $\vec{\mathbf{a}} = (a_1,a_2,\ldots,a_r)\in \mathbb{N}^r$ such that $a_1 + a_2+\ldots + a_r = k-1$ and $0\leq a_1\leq a_2\leq \ldots \leq a_r$. (See Figure~\ref{fig} for illustration.)

\begin{myDef}\label{Def:graphs}
For any integer $n$ divisible by $rk$, $\vec{\mathbf{a}} = (a_1,a_2,\ldots,a_r)\in \mathbb{N}^r_{k-1}$ and $\vec{\mathbf{b}}=(b_1,b_2,\ldots,b_r)\in\mathbb{N}_r^{k+1}$ such that $b_1\geq 1$.
Let $H(n,\vec{\mathbf{a}})$ denote the $k$-graph on vertex set $V_1 \cup V_2 \cup \ldots \cup V_r$ with $|V_i|=\frac{ra_i+1}{rk}n$ for each $i\in [r]$ and
consisting of all edges $e$ with $|V(e)\cap V_i|=a_i+1$ and $|V(e)\cap V_j|=a_j$ for every $i\in [r]$ and all $j\in [r]\backslash \{i\}$.
\end{myDef}
For any $k\geq 3$ and $r\geq 2$, we define $\mathcal{H}_{r,k}=\{H(n,\vec{\mathbf{a}})\ |\ \vec{\mathbf{a}}\in \mathbb{N}^r_{k-1}\}$
and
\[g_r(k):=\max_{\vec{\mathbf{a}}\in \mathbb{N}^r_{k-1}} \left\{\lim_{n\rightarrow \infty}\frac{\delta(H(n,\vec{\mathbf{a}}))}{\binom{n-1}{k-1}},\  ~ \bigg\vert ~ n \mbox{ is divisible by } rk \right\}.\footnote{We refer to Lemma~\ref{lem:smallest_deg} for a precise formulation of $g_r(k)$.}\]
It is notable that each $H(n,\vec{\mathbf{a}})$ contains exactly $r$ types of edges:
for every $i\in [r]$, let $E_i$ denote the edge set containing all edges $e$ with $|V(e)\cap V_i|=a_i+1$ and $|V(e)\cap V_j|=a_j$ for each $j\in [r]\backslash \{i\}$.
Let $H^*(n,\vec{\mathbf{a}})$ be the $r$-edge-colored $k$-graph obtained from $H(n,\vec{\mathbf{a}})$ by assigning color $i$ to each edge in $E_i$ for all $i\in [r]$. 
It is observed that every perfect matching in $H^*(n,\vec{\mathbf{a}})$ has exactly $n/rk$ edges with color $i$ for every $i\in [r]$,
thereby indicating that $h_r(k)\geq g_r(k)$.
Combining this with \eqref{equ:h>=f}, we derive the following lower bound for $h_r(k)$.
\begin{prop}\label{prop:main}
For any $k\geq 3$ and $r\geq 2$, it holds that $h_r(k)\geq \max\{f(k), g_r(k)\}.$
\end{prop}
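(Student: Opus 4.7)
The plan is to prove the two lower bounds $h_r(k) \geq f(k)$ and $h_r(k) \geq g_r(k)$ separately. The first is immediate from~\eqref{equ:h>=f} specialized to $\ell = 1$, since every perfect matching with high discrepancy is in particular a perfect matching. Hence the substance of the argument lies entirely in verifying $h_r(k) \geq g_r(k)$ by analyzing the multicolored extremal construction $H^*(n, \vec{\mathbf{a}})$.

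First I would fix an arbitrary $\vec{\mathbf{a}} = (a_1, \ldots, a_r) \in \mathbb{N}^r_{k-1}$ and establish the key rigidity property: every perfect matching $M$ of $H^*(n, \vec{\mathbf{a}})$ contains exactly $n/(rk)$ edges of each color. To see this, let $m_i$ denote the number of color-$i$ edges in $M$, so $\sum_{i=1}^r m_i = n/k$. By Definition~\ref{Def:graphs}, each color-$i$ edge uses $a_i+1$ vertices of $V_i$, while each color-$j$ edge with $j\neq i$ uses $a_i$ vertices of $V_i$. Counting vertices covered inside $V_i$ gives
\[
(a_i+1)\, m_i + a_i \sum_{j \neq i} m_j \;=\; |V_i| \;=\; \frac{(r a_i + 1)\, n}{rk},
\]
and substituting $\sum_j m_j = n/k$ yields $m_i + a_i \cdot n/k = a_i n/k + n/(rk)$, i.e.\ $m_i = n/(rk)$ for each $i \in [r]$. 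Consequently, for every $\delta > 0$, no perfect matching of $H^*(n, \vec{\mathbf{a}})$ contains $n/(rk) + \delta n$ edges of any single color.

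Armed with this, I would conclude $h_r(k) \geq g_r(k)$ by contradiction. Suppose $h_r(k) < g_r(k)$, and choose $\vec{\mathbf{a}} \in \mathbb{N}^r_{k-1}$ attaining the limit defining $g_r(k)$ together with $\epsilon > 0$ small enough that $h_r(k) + \epsilon < \lim_{n \to \infty} \delta(H(n, \vec{\mathbf{a}})) / \binom{n-1}{k-1}$. Then, for all sufficiently large $n$ divisible by $rk$, the $k$-graph $H(n, \vec{\mathbf{a}})$ satisfies the degree condition $\delta(H(n, \vec{\mathbf{a}})) \geq (h_r(k) + \epsilon)\binom{n-1}{k-1}$, while the natural coloring from $H^*(n, \vec{\mathbf{a}})$ admits no perfect matching with discrepancy $\delta n$ for any $\delta > 0$, contradicting the definition of $h_r(k)$. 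Combining this with $h_r(k) \geq f(k)$ gives the desired bound $h_r(k) \geq \max\{f(k), g_r(k)\}$.

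Since the sketch is already outlined in the paragraph preceding the proposition, I do not anticipate any genuine obstacle: the only care needed is in the elementary vertex-counting identity that forces $m_i = n/(rk)$ and in matching the divisibility conventions (requiring $rk \mid n$ rather than just $k \mid n$, which is fine since we are only asserting a lim sup-type lower bound). The difficult work of the paper should lie in the matching upper bound, not in this proposition.
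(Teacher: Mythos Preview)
Your proposal is correct and follows essentially the same approach as the paper: the paper likewise deduces $h_r(k)\geq f(k)$ directly from~\eqref{equ:h>=f} and obtains $h_r(k)\geq g_r(k)$ from the observation (stated just before the proposition, and recorded again as~\eqref{equ:obs2}) that every perfect matching of $H^*(n,\vec{\mathbf{a}})$ has exactly $n/(rk)$ edges of each color. Your vertex-counting derivation of this observation is exactly the intended justification.
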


\begin{figure}[ht]
	\centering
		\begin{tikzpicture}[thin][node distance=1cm,on grid]
		\draw[fill = lightgray!30, fill opacity=0.5] (-3.2,-2) arc(180:270:0.5)--(-2.1,-2.5) arc(-90:0:0.5)--(-1.6,1.5) arc(0:90:0.5)--(-2.7,2) arc(90:180:0.5)--cycle;
		\draw (-2.4,-2.8) node{$V_1$};
		\draw[fill = lightgray!30, fill opacity=0.5] (-0.8,-2) arc(180:270:0.5)--(0.3,-2.5) arc(-90:0:0.5)--(0.8,1.5) arc(0:90:0.5)--(-0.3,2) arc(90:180:0.5)--cycle;
		\draw (0,-2.8) node{$V_2$};
		\draw[fill = lightgray!30, fill opacity=0.5] (2.4,-2) arc(180:270:0.5)--(3.5,-2.5) arc(-90:0:0.5)--(4.0,1.5) arc(0:90:0.5)--(2.9,2) arc(90:180:0.5)--cycle;
		\draw (3.2,-2.8) node{$V_r$};
		\draw (1.6,-2.8) node{ $\cdots$};
		
		\draw[fill=blue!80, fill opacity=0.3] (3.2,1.0) arc(-90:90:0.3) -- (-2.4,1.8) arc(90:270:0.5) -- cycle;
		\node[circle,inner sep=0.4mm,draw=black,fill=black](x1)at(-2.4,1.5) {};
		\node[circle,inner sep=0.4mm,draw=black,fill=black](x2)at(-2.4,1.1) {};
		\node[circle,inner sep=0.4mm,draw=black,fill=black](y1)at(0,1.3) {};
		\node[circle,inner sep=0.4mm,draw=black,fill=black](z1)at(3.2,1.3) {};
		
		\draw[fill=red!80, fill opacity=0.3] (3.242,-0.266) arc(-80:80:0.25) -- (0.083,0.49) arc(80:100:0.5) -- (-2.442,0.266) arc(100:260:0.25)-- (-0.083,-0.49) arc(260:280:0.5) --cycle;
		\node[circle,inner sep=0.4mm,draw=black,fill=black](x3)at(-2.2,0) {};
		\node[circle,inner sep=0.4mm,draw=black,fill=black](y2)at(0,0.2) {};
		\node[circle,inner sep=0.4mm,draw=black,fill=black](y3)at(0,-0.2) {};
		\node[circle,inner sep=0.4mm,draw=black,fill=black](z2)at(3.0,0) {};
		
		\draw[fill=green!100, fill opacity=0.3] (-2.4,-1.5) arc(90:270:0.3) -- (3.2,-2.3) arc(-90:90:0.5) -- cycle;
		\node[circle,inner sep=0.4mm,draw=black,fill=black](x4)at(-2.4,-1.8) {};
		\node[circle,inner sep=0.4mm,draw=black,fill=black](y4)at(0,-1.8) {};
		\node[circle,inner sep=0.4mm,draw=black,fill=black](z3)at(3.2,-1.6) {};
		\node[circle,inner sep=0.4mm,draw=black,fill=black](z4)at(3.2,-2.0) {};
		
		\draw (-4,1.3) node{\small $e_1\in E_1$};
		\draw (-4,0) node{\small $e_2\in E_2$};
		\draw (-4,-0.9) node{\small $\vdots$};
		\draw (-4,-1.8) node{\small $e_r\in E_r$};		
	\end{tikzpicture}
	\caption{The $(r+1)$-graph $H(n,\vec{\mathbf{a}})\in  \mathcal{H}_{r,r+1}$ for $\vec{\mathbf{a}}=(1,1,\ldots,1)\in \mathbb{N}^r_{r}$}
\label{fig}
\end{figure}
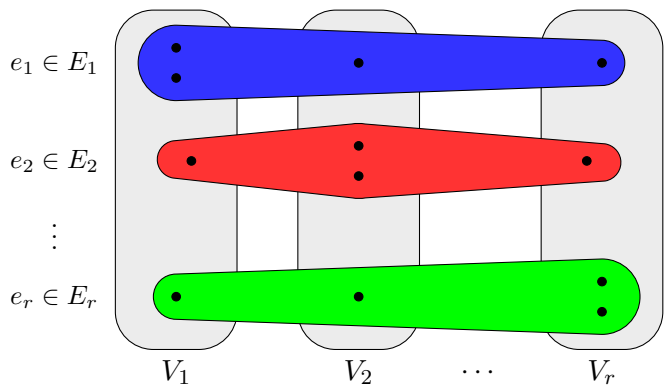

Our main result is stated below, demonstrating that the above lower bound is actually optimal.

\begin{thm}\label{main}
For any integers $k\geq 3$ and $r\geq 2$, there exists a real $\gamma=\gamma(k,r)> 0$ such that the following holds.
For any $\eta>0$ and any $k$-uniform hypergraph $H$ with a sufficiently large number of vertices $n\geq n_0(k,r,\eta)$, where $n\equiv 0\pmod k$,
if
\[\delta(H)>(\max\{f(k), g_r(k)\}+\eta)\cdot \binom{n-1}{k-1}\]
then for any $r$-coloring of the edges of $H$,
there is a perfect matching in $H$ with at least $\frac{n}{rk}(1+\gamma\cdot\eta)$ edges of the same color.
\end{thm}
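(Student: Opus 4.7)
The plan is to combine the absorption method with a stability-based case analysis. Given an $r$-edge-colored $k$-graph $H$ satisfying the degree condition, I would first produce a perfect matching via the usual perfect-matching machinery, then either directly boost its discrepancy via local switches, or prove that $(H,\chi)$ must be close to $H^*(n,\vec{\mathbf{a}})$ for some $\vec{\mathbf{a}}\in\mathbb{N}^r_{k-1}$, in which case the strict inequality beyond $g_r(k)$ forces the existence of ``non-canonical'' edges that can be exploited.

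The first step is to construct an absorbing matching $M_{\mathrm{abs}}\subseteq E(H)$ of size $o(n)$, using a standard R\"odl--Ruci\'nski--Szemer\'edi--type argument. Since $\delta(H)>f(k)\binom{n-1}{k-1}$ is essentially the perfect-matching threshold, this is well within reach; what I would additionally arrange is that $M_{\mathrm{abs}}$ is \emph{color-flexible}, in the sense that small perturbations of the color profile on the absorbed part are realisable. After removing $V(M_{\mathrm{abs}})$, the residual hypergraph $H'$ still satisfies a comparable degree condition, and any near-perfect matching of $H'$ with a prescribed approximate color profile can be completed to a perfect matching of $H$ with controlled colour composition.

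The heart of the argument is a stability dichotomy. Say that $(H,\chi)$ is $\varepsilon$-\emph{close} to $H^*(n,\vec{\mathbf{a}})$ if there is a partition $V(H)=V_1\cup\ldots\cup V_r$ with $|V_i|=\frac{ra_i+1}{rk}\,n\pm\varepsilon n$ such that all but at most $\varepsilon n^k$ edges are \emph{canonical}, meaning $|e\cap V_i|=a_i+1$ for exactly one $i$ and $|e\cap V_j|=a_j$ for all $j\neq i$, and carry the canonical color $i$. In the non-extremal case, where $(H,\chi)$ is $\varepsilon$-far from every member of $\mathcal{H}_{r,k}$, I would argue via a direct counting (or random-greedy) analysis that $H$ admits $\Omega(n)$ vertex-disjoint ``switching gadgets''---bounded-size configurations allowing a color-rebalancing swap inside a matching---and apply $\Omega(\eta n)$ of these swaps to a perfect matching obtained in the first step, shifting one color count by $\Omega(\eta n)$.

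The extremal case is the main technical obstacle. Here the colors are essentially determined by the partition, and every perfect matching close to the canonical structure has the near-exact balance $n/(rk)$ per color. However, $g_r(k)$ is the \emph{maximum} of the limit degree densities of the $H(n,\vec{\mathbf{a}})$, attained at vertices in the smallest class $V_i$; the strict inequality $\delta(H)>g_r(k)\binom{n-1}{k-1}$ combined with $\varepsilon$-closeness to $H(n,\vec{\mathbf{a}})$ then forces every vertex to lie in $\Omega(\eta\, n^{k-1})$ \emph{non-canonical} edges---edges with the wrong intersection pattern or the wrong color relative to the pattern. A careful case analysis shows that each such edge, swapped into a perfect matching in place of the canonical edges on the same vertex set, perturbs the color profile in a definite direction; performing $\Omega(\eta n)$ vertex-disjoint swaps of this kind and absorbing the residue via $M_{\mathrm{abs}}$ yields a perfect matching with at least $\frac{n}{rk}(1+\gamma\eta)$ edges of some color, for an appropriate $\gamma=\gamma(k,r)>0$. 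The main difficulty is to make the argument uniform across all $\vec{\mathbf{a}}\in\mathbb{N}^r_{k-1}$: the direction in which a non-canonical edge perturbs the color count depends on which $\vec{\mathbf{a}}$ the structure is close to, and when several vectors simultaneously nearly maximise $g_r(k)$, the canonical partition must be chosen so that the resulting swap directions are consistent. This interplay among the multiple extremal structures parametrised by $\mathcal{H}_{r,k}$ is precisely the new phenomenon highlighted in the abstract, and is where essentially all of the technical work lies.
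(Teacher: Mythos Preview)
Your plan is workable in spirit but takes a substantially more involved route than the paper, and the machinery you propose (color-flexible absorbers, approximate stability, non-canonical edge swaps in the extremal case) is unnecessary.

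The paper's dichotomy is not ``close to vs.\ far from $\mathcal{H}_{r,k}$'' but simply ``many vs.\ few vertex-disjoint good gadgets.'' If $H$ contains $\Theta(\eta n)$ disjoint gadgets, one removes them, takes any perfect matching of the remainder (which exists since $\delta> f(k)\binom{n-1}{k-1}$), and pigeonholes to find a color whose count can be shifted by $\Theta(\eta n/r)$ across the gadgets---no absorption is needed. If instead a maximal collection of disjoint gadgets has size $o(\eta n)$, the paper removes them and works with the residual $H'$, which is \emph{exactly} gadget-free. The crucial point is Lemma~\ref{lem:key1}: a gadget-free $r$-edge-colored $k$-graph with $\delta>\tfrac12\binom{n-1}{k-1}+O\bigl(\binom{n-2}{k-2}\bigr)$ and all $r$ colors present is not merely close to, but is literally an edge-colored subgraph of some $H^*(\vec{V}',\vec{\mathbf{a}})$. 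Under the standing assumption that no perfect matching of $H$ is color-biased, any perfect matching of $H'$ has nearly balanced colors, which via \eqref{equ:obs2} pins down $|V_i'|$ to within $O(\eta n)$ of the extremal sizes; Lemma~\ref{lem:smallest_deg}(B) then bounds $\delta(H')\le (g_r(\vec{\mathbf{a}})+O(\eta))\binom{n-1}{k-1}\le (g_r(k)+\eta/3)\binom{n-1}{k-1}$, directly contradicting the hypothesis. There is no swap analysis in the extremal case at all---the degree inequality is violated outright.

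Two consequences for your outline: first, your concern about several $\vec{\mathbf{a}}$ nearly maximising $g_r(k)$ evaporates, since Lemma~\ref{lem:key1} hands you a \emph{specific} $\vec{\mathbf{a}}$ and the bound $g_r(\vec{\mathbf{a}})\le g_r(k)$ suffices; second, your extremal-case strategy of locating $\Omega(\eta n^{k-1})$ non-canonical edges and swapping them in would have to be built from scratch, whereas the paper sidesteps it entirely. The real technical content lies in proving the exact structure lemma for gadget-free hypergraphs (Section~3), not in any stability or absorption argument.
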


Now combining Proposition~\ref{prop:main} with Theorem~\ref{main}, we can express the solution to Problem~\ref{prob} as $$h_r(k)= \max\{f(k), g_r(k)\} \mbox{ for all } k\geq 3 \mbox{ and } r\geq 2.$$
For any given values of $k$ and $r$, this involves a finite optimization process,
however determining whether $h_r(k)>f(k)$ is not a straightforward task.
In the subsequent result,
we further identify the unique extremal configurations for all instances, with the exception of finite specific cases, namely when $r=2$ and $6\leq k\leq 16$.

\begin{thm}\label{thm:main2}
For any integers $k\geq 3$ and $r\geq 2$, it holds that $$h_r(k)= \max\{f(k), g_r(k)\},$$
where we have
\begin{itemize}
\item $f(k)>g_r(k)$ if one of the following holds:

(1) $r=2$ and $k\geq 17$;\quad (2) $r=3$ and $k\geq 5$;\quad (3) $r=4$ and $k\geq 4$;\quad (4) $r\geq 5$.
\item $g_r(k)>f(k)$ if one of the following holds:
\begin{itemize}
\item [(1)] $r=2$ and $k=3$, which is attained by the 3-graph $H(n,(1,1))$;
\item [(2)] $r=2$ and $k=4$, which is attained by the 4-graph $H(n,(1,2))$;
\item [(3)] $r=2$ and $k=5$, which is attained by the 5-graph $H(n,(0,4))$;
\item [(4)] $r=3$ and $k=3$, which is attained by the 3-graph $H(n,(0,0,2))$;
\item [(5)] $r=3$ and $k=4$, which is attained by the 4-graph $H(n,(0,0,3))$;
\item [(6)] $r=4$ and $k=3$, which is attained by the 3-graph $H(n,(0,0,0,2))$.
\end{itemize}
\end{itemize}
\end{thm}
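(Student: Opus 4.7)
The identity $h_r(k) = \max\{f(k), g_r(k)\}$ is immediate from Proposition~\ref{prop:main} (lower bound) and Theorem~\ref{main} (upper bound), so the substantive content of Theorem~\ref{thm:main2} is the comparison between $f(k)$ and $g_r(k)$ for each $(r,k)$, together with the identification of the unique extremal $\vec{\mathbf{a}} \in \mathbb{N}^r_{k-1}$ in the cases where $g_r(k) > f(k)$. Since $\mathbb{N}^r_{k-1}$ is a finite set, the plan is to reduce $g_r(k)$ to an explicit finite max-min optimization and then to match it against $f(k)$, for which either the exact value ($k \leq 5$) or the lower bound $f(k) \geq \max\{1/2,\, 1-(1-1/k)^{k-1}\}$ is available.

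For fixed $\vec{\mathbf{a}} \in \mathbb{N}^r_{k-1}$, write $\alpha_i = (ra_i+1)/(rk)$ so that $|V_i|/n \to \alpha_i$. Counting edges of $H(n,\vec{\mathbf{a}})$ through a vertex $v \in V_i$, split by edge type $E_i$ and $E_j$ for $j \neq i$, yields as $n \to \infty$
$$\frac{d_H(v)}{\binom{n-1}{k-1}} \to \frac{(k-1)!}{\prod_\ell a_\ell!}\, \prod_\ell \alpha_\ell^{a_\ell}\, \left(1 + \sum_{j \neq i} \frac{a_i\,\alpha_j}{(a_j + 1)\,\alpha_i}\right),$$
with the sum vacuous when $a_i = 0$. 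Lemma~\ref{lem:smallest_deg} then extracts $g_r(k)$ as the maximum over $\vec{\mathbf{a}}$ of the minimum over $i$ of this expression. For each of the six cases (1)--(6) where the theorem asserts $g_r(k) > f(k)$, I would enumerate the (small) set $\mathbb{N}^r_{k-1}$, compute the asymptotic minimum degree for each candidate via the formula above, and compare with the known values $f(3)=5/9$, $f(4)=37/64$, $f(5)=369/625$ from \cite{HPS09,Kh13,KOT13,AFH12,K16}. For example in case (1) the candidates are $(0,2)$ and $(1,1)$, giving $25/36$ and $3/4$ respectively; hence $g_2(3)=3/4 > 5/9 = f(3)$, uniquely attained at $(1,1)$. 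The remaining five cases are handled by analogous enumerations.

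For the asymptotic cases where $f(k) > g_r(k)$, the strategy is a uniform upper bound on $g_r(k)$ coupled with the bound $f(k) \geq 1-(1-1/k)^{k-1} \to 1-1/e$. The formula above shows that, for any $\vec{\mathbf{a}}$, the normalized degree of a vertex in a part $V_i$ with $a_i = 0$ equals $\frac{(k-1)!}{\prod_\ell a_\ell!}\prod_\ell \alpha_\ell^{a_\ell}$. A short convexity/AM--GM estimate, using $\sum_\ell a_\ell = k-1$ and $\sum_\ell \alpha_\ell = 1$, bounds this (and hence $g_r(k)$) asymptotically by $(1-(r-1)/(rk))^{k-1} \to e^{-(r-1)/r}$, which is strictly below $1-1/e$ for every $r \geq 2$. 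The main obstacle will be the borderline small-$k$ regime (in particular $r \geq 5$ with $k \in \{3,4,5\}$) where the asymptotic estimate $1-1/e$ is not yet effective; these reduce to finite case analyses in which the specific cutoffs in the theorem emerge from numerical inequalities. For instance the $k=3$ cutoff $r \geq 5$ comes from the concentrated vector $(0,\ldots,0,2)$ yielding $g_r(3) = ((2r+1)/(3r))^2$, and $((2r+1)/(3r))^2 < 5/9 = f(3)$ precisely when $r \geq 5$.
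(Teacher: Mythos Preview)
Your overall plan mirrors the paper's: derive $h_r(k)=\max\{f(k),g_r(k)\}$ from Proposition~\ref{prop:main} and Theorem~\ref{main}, then compare $g_r(k)$ against $f(k)$ (or against the lower bound $f_0(k)=1-(1-1/k)^{k-1}$) via the explicit formula of Lemma~\ref{lem:smallest_deg}, handling large $k$ analytically and small $k$ by finite enumeration. Your enumeration for the six cases with $g_r(k)>f(k)$, and your $k=3$ cutoff computation, are exactly right.

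There is, however, a genuine gap in your asymptotic step. You bound $g_r(\vec{\mathbf a})$ by observing that a vertex in a part $V_i$ with $a_i=0$ has normalized degree equal to the bare product $\binom{k-1}{a_1,\ldots,a_r}\prod_\ell\alpha_\ell^{a_\ell}$, and then write ``and hence $g_r(k)$''. But $g_r(k)=\max_{\vec{\mathbf a}}g_r(\vec{\mathbf a})$, and vectors with \emph{all} $a_i\ge 1$ have no such part; indeed for $r=2,\,k=3$ the maximiser is $(1,1)$, giving $g_2(3)=3/4$, strictly larger than your putative bound $(5/6)^2=25/36$. For such $\vec{\mathbf a}$ the minimum-degree expression carries the extra factor $1+\frac{a_1}{ra_1+1}\sum_{j\ge 2}\frac{ra_j+1}{a_j+1}$, which can be as large as $r$, and an AM--GM bound on the product alone cannot absorb it. The paper treats the two regimes separately (Lemma~\ref{lem:g<k}): when $a_1\ge 1$ it bounds the factor crudely by $r$ and controls the multinomial product via Stirling, obtaining essentially $C\,r^{3/2}(2\pi)^{-(r-1)/2}$, which is enough for $r\ge 3$; for $r=2$ a further split on $a_1\in\{0,1,2,\ge 3\}$ is needed and the analytic bound only takes effect at $k\ge 20$, leaving $3\le k\le 19$ to numerical tables. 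So the ``borderline small-$k$'' regime is considerably wider than you anticipate --- for $r=2$ it runs up to $k=19$, and in fact the theorem leaves $6\le k\le 16$ undetermined precisely because only $f(k)\ge f_0(k)$ is available there.
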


The proof of this theorem relies on established results on $f(k)$ and utilizes analytical optimization techniques on $g_r(k)$.
It indicates that apart from a finite number of cases,
the $r$-color discrepancy threshold aligns with the corresponding existence threshold for perfect matchings in $k$-uniform hypergraphs for $k\geq 3$ and $r\geq 2$.
Conversely, notably, there exist instances where the discrepancy threshold exceeds the corresponding existence threshold, thus offering an affirmative answer to the query posed by Gishboliner, Glock, and Sgueglia \cite{GGS}.
Furthermore, observing that the 3-graph $H(n,(1,1))$ coincides with the example provided by Balogh, Treglown, and Z\'arate-Guer\'en \cite{BTZ},
this shows that $h_2(3)=\frac{3}{4}$ indeed holds, confirming the question posted in \cite{BTZ} for the case $k=3$ and $r=2$.

The rest of the paper is organized as follows.
In Section~2, we introduce some necessary notations and preliminary results.
In Section 3, we establish Lemma~\ref{lem:key1}, offering the fundamental structure for the extremal hypergraphs we consider.
In Section~4 and Section~5, we prove Theorem \ref{main} and Theorem~\ref{thm:main2}, respectively.
Finally, in Section~6 we discuss some remarks and questions for further consideration.

\section{Preliminaries}
In this section, we introduce notations and some preliminary results that will be used in the forthcoming proof.
Throughout, let $r\geq 2$ be an integer, $H$ be a $k$-graph and $c$ be an $r$-edge-coloring of $H$ such that $c: E(H)\rightarrow\{C_1,C_2,\ldots,C_r\}$.

We primarily adhere to the terminologies used in \cite{BTZ}.
Given $x\in V(H)$ and $A\subseteq V(H)$, we write $xA$ or $Ax$ to denote the set $\{x\}\cup A$.
By $H-A$, we mean the $k$-graph obtained from $H$ by deleting all vertices in $A$.
An edge of $H$ is called a $C_i$-edge if it is colored by $C_i$ in $c$.
For a subgraph $F$ of $H$, the {\it color profile} $\cp(F)$ of $F$ denotes the $r$-tuple $(x_1,x_2,\ldots, x_r)$,
where $x_i$ denotes the number of $C_i$-edges in $F$ for each $i\in [r]$.
The following definitions are crucial in the coming proofs.

\begin{myDef}\label{Def:2vtx}
Let $\{u, v\}\in \binom{V(H)}{2}$ and $T\in N_H(u)\cap N_H(v)$.
We say $uTv$ is
\begin{itemize}
  \item $\bfS$, if $c(T\cup \{u\}) = c(T\cup \{v\})$, or
  \item $\bfi\bfj$, if $c(T\cup \{u\}) = C_i$ and $c(T\cup \{v\}) = C_j$ where $i\neq j$.
\end{itemize}
\end{myDef}

\begin{myDef}\label{Def:type}
Let $\{u, v\}\in \binom{V(H)}{2}$. We say that an ordered pair $(u,v)$ is
\begin{itemize}
\item type $\bfS$ if there are at least $k^2\binom{n-2}{k-2}$ sets $T\in N_H(u)\cap N_H(v)$ such that $uTv$ is $\bfS$, or
\item type $\bfi\bfj$ if there are at least $k^2\binom{n-2}{k-2}$ sets $T\in N_H(u)\cap N_H(v)$ such that $uTv$ is $\bfi\bfj$.
\end{itemize}
\end{myDef}

We point out that $(u,v)$ is type $\bfi\bfj$ if and only if $(v,u)$ is type $\bfj\bfi$, and it is possible that $(u,v)$ has multiple types.
In the coming proofs, we will frequently use the following simply observations:
If $(u,v)$ is type $\bfS$ (or $\bfi\bfj$),
then for any subset $A$ of size less than $k^2$, there exists a set $T$ in $N_H(u)\cap N_H(v)$ such that $T\cap A=\emptyset$ and $uTv$ is $\bfS$ (or $\bfi\bfj$);
in particular, one can find at least $k$ disjoint sets $T_i$ in $N_H(u)\cap N_H(v)$ such that each $uT_iv$ is $\bfS$ (or $\bfi\bfj$).

A subgraph $F$ of $H$ is called {\it good} (with respect to a given coloring $c$), if $F$ contains two perfect matchings that have different color profiles.
As we will see later, it becomes evident that good subgraphs are essential in the construction of color-biased perfect matchings of $H$.

In what follows, we introduce three kinds of good subgraphs (referred to as {\it gadgets}) that will be heavily used in the proofs.

\begin{gad}\label{gad1}
Let $u, v\in V(H)$ be distinct vertices.
A good gadget $G$ of the first kind is a subgraph of $H$ on $2k$ vertices such that
\begin{itemize}
  \item $G:=H[\{u,v\}\cup T_1\cup T_2]$, where $T_1,T_2\in N_H(u)\cap N_H(v)$ are disjoint, and
  \item $\{T_1u,T_2v\}$ and $\{T_1v,T_2u\}$ are two perfect matchings of $G$ with different color profiles.
\end{itemize}
\end{gad}

\begin{gad}\label{gad2}
Let $\{u_1,u_2,u_3\}\in \binom{V(H)}{3}$. A good gadget $G$ of the second kind is a subgraph of $H$ on $3k$ vertices such that
\begin{itemize}
  \item  $G:=H[\{u_1,u_2,u_3\}\cup T_1\cup T_2\cup T_3]$, where $T_1\in N_H(u_2)\cap N_H(u_3)$, $T_2\in N_H(u_1)\cap N_H(u_3)$ and $T_3\in N_H(u_1)\cap N_H(u_2)$ are pairwise disjoint, and
  \item $\{T_1u_2,T_2u_3,T_3u_1\}$ and $\{T_1u_3,T_2u_1,T_3u_2\}$ are two perfect matchings of $G$ with two different color profiles.
\end{itemize}
\end{gad}

\begin{gad}\label{gad3}
Let $e,f$ be two edges of $H$ such that $V(e)\backslash V(f)=\{u_1,\ldots,u_\ell\}$ and $V(f)\backslash V(e)=\{v_1,\ldots,v_\ell\}$ for some $1\leq \ell\leq k$.
A good gadget $G$ of the third kind is a subgraph of $H$ on $(\ell+1)k$ vertices such that
\begin{itemize}
  \item  $G:=H[V(e)\cup V(f)\cup T_1\cup \ldots \cup T_\ell]$, where $T_i\in N_H(u_i)\cap N_H(v_i)$ for all $i\in [\ell]$ are disjoint, and
  \item $\{u_1T_1,\ldots,u_\ell T_\ell,f\}$ and $\{v_1T_1,\ldots,v_\ell T_\ell,e\}$ are two perfect matchings of $G$ with different color profiles.
\end{itemize}
\end{gad}

Finally, we need the following classic result of Hilton and Milner \cite{HM}.
We say that two families $\mathcal{A},\mathcal{B}$ of sets are {\it cross-intersecting}, if $A\cap B\neq \emptyset$ for every $A\in \mathcal{A}$ and every $B\in \mathcal{B}$.

\begin{thm}[Hilton and Milner \cite{HM}]\label{cross}
	Suppose that  $m> 2\ell>0$ and $\mathcal{A},\mathcal{B}\subseteq \binom{[m]}{\ell}$ are two non-empty and cross-intersecting families. Then we have
	\begin{equation*}
		|\mathcal{A}|+|\mathcal{B}|\leq\binom{m}{\ell}-\binom{m-\ell}{\ell}+1.
	\end{equation*}
\end{thm}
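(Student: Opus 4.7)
The plan is to prove this Hilton--Milner-type sum bound for cross-intersecting uniform families using the classical shifting (compression) technique, which is the standard tool for extremal set-family problems of this flavor.

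\textbf{Step 1: Reduction to shifted families.} For $1 \leq i < j \leq m$, recall the shift operator $S_{ij}$ which replaces each set $F$ containing $j$ but not $i$ by $(F \setminus \{j\}) \cup \{i\}$, provided the latter is not already present. It is well known that $|S_{ij}(\mathcal{F})| = |\mathcal{F}|$, and a standard lemma says that if $\mathcal{A},\mathcal{B}$ are cross-intersecting then so are $S_{ij}(\mathcal{A})$ and $S_{ij}(\mathcal{B})$, provided the \emph{same} operator is applied to both families. Iterating over all pairs $(i,j)$, we may assume both $\mathcal{A}$ and $\mathcal{B}$ are shifted (left-compressed).

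\textbf{Step 2: Locating a common element.} A non-empty shifted family in $\binom{[m]}{\ell}$ always contains the lexicographically smallest $\ell$-set $[\ell]=\{1,2,\ldots,\ell\}$. Hence $[\ell]\in\mathcal{A}\cap\mathcal{B}$, and cross-intersection with $[\ell]$ forces every $A\in\mathcal{A}$ and every $B\in\mathcal{B}$ to meet $[\ell]$. In particular, both families lie inside the star at $[\ell]$, which yields the crude individual bound
\[
|\mathcal{A}|,\, |\mathcal{B}| \;\leq\; \binom{m}{\ell} - \binom{m-\ell}{\ell}.
\]

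\textbf{Step 3: Sharpening the sum bound.} To capture the extra $+1$, I would pass to the complementary family $\mathcal{B}^{c} = \{[m]\setminus B : B \in \mathcal{B}\} \subseteq \binom{[m]}{m-\ell}$. Cross-intersection of $\mathcal{A}$ and $\mathcal{B}$ is equivalent to saying that no $A\in\mathcal{A}$ is contained in any set of $\mathcal{B}^{c}$; equivalently, $\mathcal{A}$ is disjoint from the iterated lower shadow $\partial^{(m-2\ell)}(\mathcal{B}^{c})\subseteq\binom{[m]}{\ell}$. This gives
\[
|\mathcal{A}| + \bigl|\partial^{(m-2\ell)}(\mathcal{B}^{c})\bigr| \;\leq\; \binom{m}{\ell}.
\]
The Kruskal--Katona theorem then supplies a lower bound on the shadow in terms of $|\mathcal{B}^{c}|=|\mathcal{B}|$, reducing the question to a single-variable inequality in $|\mathcal{B}|$ whose worst case matches the claimed bound, with equality occurring precisely at the two natural extremal configurations (a singleton $\mathcal{A}=\{A_0\}$ paired with all $\ell$-sets meeting $A_0$, or a common vertex-star $\mathcal{A}=\mathcal{B}$).

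The main obstacle will be the final estimate in Step 3: one has to check that the Kruskal--Katona bound is tight enough to extract the additive $+1$, which requires unpacking the cascade representation of $|\mathcal{B}|$ and using $m>2\ell$ to ensure the shadow argument is non-degenerate. An alternative route, avoiding Kruskal--Katona, is to exploit the shifted structure more directly: if $|\mathcal{A}|\geq 2$, then by shiftedness $\mathcal{A}$ must contain a second set $A'$ sharing $\ell-1$ elements with $[\ell]$, and the set of $\ell$-sets meeting both $[\ell]$ and $A'$ can be counted by inclusion-exclusion to yield $|\mathcal{B}|\leq \binom{m}{\ell}-\binom{m-\ell}{\ell}-\binom{m-\ell-1}{\ell-1}$, which combined with $|\mathcal{A}|\geq 2$ and $\binom{m-\ell-1}{\ell-1}\geq 1$ (valid since $m>2\ell$) gives the desired bound. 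Pushing this casework to larger $|\mathcal{A}|$ by an inductive or shifting refinement would then complete the argument.
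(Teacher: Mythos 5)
The paper does not prove this statement: it is quoted verbatim as a classical theorem of Hilton and Milner from \cite{HM} and used as a black box, so there is no in-paper argument to compare yours against. Judged on its own, your setup is sound --- applying the same shift to both families preserves cross-intersection, a non-empty shifted family in $\binom{[m]}{\ell}$ contains $[\ell]$, and the reformulation of cross-intersection as ``$\mathcal{A}$ is disjoint from the $(m-2\ell)$-fold lower shadow of $\mathcal{B}^{c}$'' is correct --- but both of your routes stop exactly at the step where the content of the theorem lies, and in both cases the missing step is the nontrivial one.

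In Route A you need $|\partial^{(m-2\ell)}(\mathcal{B}^{c})| \geq |\mathcal{B}| + \binom{m-\ell}{\ell} - 1$. As stated this is false without a cap on $|\mathcal{B}|$ (take $\mathcal{B}=\binom{[m]}{\ell}$: the shadow is only $\binom{m}{\ell}$, which is smaller since $\binom{m-\ell}{\ell}\geq 2$ when $m>2\ell$), so you must feed in the constraint coming from $\mathcal{A}\neq\emptyset$ together with the star bound of Step 2 and then actually carry out the Kruskal--Katona cascade computation; asserting that ``the worst case matches the claimed bound'' is precisely the theorem, not a reduction of it. (This route can be completed --- it is essentially Frankl and Tokushige's proof of the inequality --- but the completion is the whole proof.) In Route B the arithmetic does not close: from $[\ell]$ and $[\ell-1]\cup\{\ell+1\}$ both lying in $\mathcal{A}$ you correctly derive $|\mathcal{B}|\leq\binom{m}{\ell}-\binom{m-\ell}{\ell}-\binom{m-\ell-1}{\ell-1}$, but adding $|\mathcal{A}|$ then yields the desired bound only when $|\mathcal{A}|\leq\binom{m-\ell-1}{\ell-1}+1$; for larger $\mathcal{A}$ you must show that each further member of $\mathcal{A}$ excludes at least one further set from $\mathcal{B}$, and that injection (or the induction replacing it) is the crux you have deferred. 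Since the result is classical and is invoked in the paper purely by citation, the clean fix is simply to cite it; if you want a self-contained proof, Route A with the explicit Kruskal--Katona estimate is the shortest complete path.
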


\section{Extremal structure and minimum degree}
In this section, we prove our key technical lemma -- Lemma~\ref{lem:key1},
which forms the structural foundation of the main theorem.

Before introducing this lemma,
we define $k$-graphs that extend Definition~\ref{Def:graphs} and highlight a crucial observation.
Let $n, k\geq 3$ and $r\geq 2$ be integers such that $n$ is dividable by $rk$.
For any $\vec{\mathbf{a}}=(a_1,\ldots,a_r)\in \mathbb{N}^r_{k-1}$ and any $r$-partition $\vec{V}=(V_1,\ldots, V_r)$ of $n$ vertices,
let $H(\vec{V},\vec{\mathbf{a}})$ denote the $k$-graph with vertex set $V_1\cup V_2\cup ...\cup V_r$ consisting of all edges $e$ with $|V(e)\cap V_i|=a_i+1$ and $|V(e)\cap V_j|=a_j$ for all $1\leq i\neq j\leq r$.
It is not hard to see that $H(\vec{V},\vec{\mathbf{a}})$ contains a perfect matching if and only if $|V_i|\geq \frac{a_i}{k}n$ holds for every $i\in [r]$.
Let $H^*(\vec{V},\vec{\mathbf{a}})$ be the $r$-edge-colored $k$-graph obtained from $H(\vec{V},\vec{\mathbf{a}})$ by coloring each edge $e$, satisfying $|V(e)\cap V_i|=a_i+1$, with the color $C_i$ for every $i\in [r]$.
For a subgraph $F$, we write $C_i(F)$ as the number of $C_i$-edges contained in $F$.
Then one can observe that for every perfect matching $\mathcal{M}$ in $H^*(\vec{V},\vec{\mathbf{a}})$ and for every $i\in [r]$,
\begin{equation}\label{equ:obs2}
\mbox{it holds that $|V_i|=a_i\frac{n}{k}+C_i(\mathcal{M})$.}
\end{equation}

There is still another type of $k$-graphs such that there does not exist any kind of good gadgets for some $r$-coloring.
Indeed, it has similar structure as $H(n,\vec{\mathbf{a}})$.
Let $n, k\geq 3$ and $r\geq 2$ be integers such that $n$ is dividable by $rk$.
For any $\vec{\mathbf{b}}\in\mathbb{N}_{k+1}^r$ such that $b_1\geq 1$,
let $\tilde{H}(n,\vec{\mathbf{b}})$ denote the the $k$-graph on vertex set $V_1 \cup V_2 \cup \ldots \cup V_r$ with $|V_i|=\frac{rb_i-1}{rk}n$ for each $i\in [r]$ and
consisting of all edges $e$ with $|V(e)\cap V_i|=b_i-1$ and $|V(e)\cap V_j|=b_j$ for every $i\in [r]$ and all $j\in [r]\backslash \{i\}$.
Let $\tilde{H}^*(n,\vec{\mathbf{b}})$ be the $r$-edge-colored $k$-graph obtained from $H(n,\vec{\mathbf{b}})$ by assigning color $i$ to each edge in $E_i$ for all $i\in [r]$, where $E_i$ consists of all edges $e$ in $H(n,\vec{\mathbf{b}})$ such that $|V(e)\cap V_i|=b_i-1$ and $|V(e)\cap V_j|= b_j$ for each $j\in [r]\backslash\{i\}$.
It is easy to check that every perfect matching of $\tilde{H}^*(n,\vec{\mathbf{b}})$ has exactly $n/rk$ edges with color $i$ for every $i\in[r]$.

Under the same setting of $n,k$ and $r$, for any $\vec{\mathbf{b}}=(b_1,\ldots, b_r)\in\mathbb{N}_{k+1}^r$ satisfying $b_1\geq 1$ and any $r$-partition $\vec{V}=(V_1,\ldots, V_r)$ of $n$ vertices, 
let $\tilde{H}(\vec{V},\vec{\mathbf{b}})$ denote the $k$-graph with vertex set $V_1\cup V_2\cup ...\cup V_r$ consisting of all edges $e$ with $|V(e)\cap V_i|=b_i-1$ and $|V(e)\cap V_j|=b_j$ for all $1\leq i\neq j\leq r$.
Let $\tilde{H}^*(\vec{V},\vec{\mathbf{b}})$ be the $r$-edge-colored $k$-graph obtained from $\tilde{H}(\vec{V},\vec{\mathbf{b}})$ by coloring each edge $e$, satisfying $|V(e)\cap V_i|=b_i-1$, with the color $C_i$ for every $i\in [r]$.
Then one can observe that for every perfect matching $\mathcal{M}$ in $\tilde{H}^*(\vec{V},\vec{\mathbf{b}})$ and for every $i\in [r]$,
\begin{equation}\label{equ:obs2}
\mbox{it holds that $|V_i|=b_i\frac{n}{k}-C_i(\mathcal{M})$.}
\end{equation}

Our key lemma reads as follows.
It establishes the necessity of the presence of $H^*(\vec{V},\vec{\mathbf{a}})$ or $\tilde{H}^*(\vec{V},\vec{\mathbf{b}})$ for some specific $\vec{V}$ and $\vec{\mathbf{a}}\in\mathbb{N}^r_{k-1}$ or $\vec{\mathbf{b}}\in\mathbb{N}^r_{k+1}$ satisfying $b_1\geq 1$
in determining the extremal structure of $k$-graphs.

\begin{lem}\label{lem:key1}
Let $n,k, r$ be positive integers with $n\geq 2k^2$ and $r\geq 2$.
Let $H$ be an $r$-edge-colored $n$-vertex $k$-graph with the edge-coloring $c: E(H)\rightarrow\{C_1,C_2,\ldots,C_r\}$ such that $H$ has at least one $C_i$-edge for each $i\in [r]$.
Suppose the following conditions hold:
\begin{itemize}
\item $\delta(H)>\frac{1}{2}\binom{n-1}{k-1}+\frac{k^2+1}{2}\binom{n-2}{k-2}$, and
\item $H$ does not contain any good gadgets of any kind.
\end{itemize}
Then by possibly renaming the colors, there exists an $r$-partition $\vec{V}$ of $V(H)$ and a vector $\vec{\mathbf{a}}\in\mathbb{N}^r_{k-1}$ or a vector $\vec{\mathbf{b}}\in \mathbb{N}^r_{k+1}$ satisfying $b_1\geq 1$ such that $H$ is an $r$-edge-colored subgraph of $H^*(\vec{V},\vec{\mathbf{a}})$ or $\tilde{H}^*(\vec{V},\vec{\mathbf{b}})$.
\end{lem}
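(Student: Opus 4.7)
\emph{Pairs and parts.} I first exploit the minimum-degree hypothesis with inclusion--exclusion to get $|N_H(u)\cap N_H(v)|>k^2\binom{n-2}{k-2}$ for every pair $(u,v)$, and classify each witness $T$ by the ordered color pair $(c(Tu),c(Tv))$. For two disjoint witnesses $T,T'$ with configurations $(\alpha,\beta)$ and $(\gamma,\delta)$, the matchings $\{Tu,T'v\}$ and $\{Tv,T'u\}$ of the induced $2k$-vertex subgraph carry color multisets $\{\alpha,\delta\}$ and $\{\beta,\gamma\}$, which agree only if $\alpha=\beta$ and $\gamma=\delta$ (both $\mathbf{S}$-configurations) or if $(\alpha,\beta)=(\gamma,\delta)$; otherwise they form a good gadget of the first kind. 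So each pair has a canonical type: either $\mathbf{S}$ or $\mathbf{C_iC_j}$ for a unique ordered pair $i\neq j$. Setting $u\sim v$ iff $(u,v)$ is of type $\mathbf{S}$ yields an equivalence (transitivity failing would exhibit a good gadget of the second kind whose two matchings differ only in the $C_i$-vs-$C_j$ coordinate), producing parts $V_1,\ldots,V_p$. A parallel argument with a good gadget of the second kind, inserting an intra-part $\mathbf{S}$-witness between two representatives of the same part, shows that for $a\neq b$ all cross-pairs in $V_a\times V_b$ share one ordered type $\mathbf{C_{f(a,b)}C_{f(b,a)}}$, producing a function $f\colon\{(a,b)\colon a\neq b\}\to[r]$ with $f(a,b)\neq f(b,a)$.

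\emph{Global color function.} Next, applying a good gadget of the second kind to representatives from three distinct parts $V_a,V_b,V_c$ forces the multiset identity $\{f(a,b),f(b,c),f(c,a)\}=\{f(b,a),f(c,b),f(a,c)\}$; enumerating the six bijections and using $f(x,y)\neq f(y,x)$ leaves exactly two possibilities: $f(a,b)=\psi(a)$ (depends only on the first argument) or $f(a,b)=\phi(b)$ (depends only on the second). Excluding the latter ``$\phi$-scenario'' will be the main obstacle. To dispose of it, I would examine, for a $C_m$-edge $e$ and $u\in V(e)\cap V_a$, the cross-part swaps $v\in V_b\setminus V(e)$ (with $b\neq a$) satisfying $e-u+v\in E(H)$: in the $\phi$-scenario, a good gadget of the third kind with $\ell=1$ forces $\phi(b)=m$ for every admissible swap, confining the swap targets to the single part $V_{\phi^{-1}(m)}$; the minimum-degree bound, combined with a careful counting (using Hilton--Milner to control the codegree families), should exhibit admissible swaps into two distinct parts, contradicting the injectivity of $\phi$. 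In the surviving $\psi$-scenario, $\psi$ is injective (since $f(a,b)\neq f(b,a)$) and surjective (since every color appears on some edge), so $p=r$; relabelling the colors so that $\psi$ becomes the identity, every cross-pair $(u,v)\in V_s\times V_t$ has type $\mathbf{C_sC_t}$.

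\emph{Edge profiles.} Finally, to read off $\vec{\mathbf{a}}$, I compare two $C_i$-edges $e,e'$ via a good gadget of the third kind with $\ell=|V(e)\setminus V(e')|$: pair $V(e)\setminus V(e')=\{u_1,\ldots,u_\ell\}$ with $V(e')\setminus V(e)=\{v_1,\ldots,v_\ell\}$ so as to match intra-part vertices whenever possible, and choose pairwise-disjoint witnesses $T_j\in N_H(u_j)\cap N_H(v_j)$ avoiding $V(e)\cup V(e')$. The absence of such a gadget forces the color multisets $\{c(u_jT_j)\colon j\in[\ell]\}\cup\{C_i\}$ and $\{c(v_jT_j)\colon j\in[\ell]\}\cup\{C_i\}$ to coincide; using the previous step's identity $c(xT_j)=C_{\mathrm{part}(x)}$, this says exactly that the part-profiles of $e$ and $e'$ coincide. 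Hence every $C_i$-edge has a common profile $(a_1,\ldots,a_i+1,\ldots,a_r)$, and cross-color consistency of the $a_\ell$'s is obtained the same way by comparing $C_i$- with $C_{i'}$-edges. Reindexing $\vec{V}$ so that $(a_1,\ldots,a_r)$ is weakly increasing yields $\vec{\mathbf{a}}\in\mathbb{N}^r_{k-1}$ and makes $H$ an $r$-edge-colored subgraph of $H^*(\vec{V},\vec{\mathbf{a}})$, as required.
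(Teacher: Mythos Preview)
Your architecture matches the paper's: first-kind gadgets give each ordered pair a unique type; second-kind gadgets make $\mathbf S$ an equivalence relation and pin down the cross-part types; third-kind gadgets yield an invariant fixing the edge profiles. Where the paper's Lemma~\ref{lem:3vtx}(B) simply asserts ``it is easy to see'' that the cross-types take the form $\mathbf{C_{\psi(a)}C_{\psi(b)}}$, you correctly observe that the second-kind gadget alone leaves \emph{two} possibilities, $f(a,b)=\psi(a)$ or $f(a,b)=\phi(b)$, and then try to eliminate $\phi$ separately. On this point you are more careful than the paper.

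That elimination, however, is where your proposal breaks, and the gap is genuine. Your claim that an $\ell=1$ third-kind gadget forces every admissible cross-part swap $e-u+v$ of a $C_m$-edge to land in $V_{\phi^{-1}(m)}$ is correct, but the next assertion---that the minimum-degree hypothesis ``should exhibit admissible swaps into two distinct parts''---does not follow: the hypothesis bounds vertex degrees, not the codegree $d_H(e\setminus\{u\})$, and nothing prevents every $(k-1)$-set from extending into at most two parts. Concretely, for $k=r=3$ take $V=V_1\cup V_2\cup V_3$ and let $H$ consist of all triples with profile $(1,1,1)$ colored $C_1$, profile $(2,0,1)$ colored $C_2$, and profile $(2,1,0)$ colored $C_3$. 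Every ordered pair then has a unique type and the cross-types are exactly the $\phi$-scenario with $\phi=\mathrm{id}$ (e.g.\ for $(u,v)\in V_1\times V_2$ the only common neighbours have profile $(1,0,1)$, giving type $\mathbf{C_2C_1}$); moreover $\vec{\#}(e)+\vec 1(e)=(2,1,1)$ for every edge, so the two matchings in any third-kind configuration have equal color profiles, and together with the unique-type property this rules out all three gadget kinds. With $|V_1|=\tfrac{5n}{9}$, $|V_2|=|V_3|=\tfrac{2n}{9}$ one finds $\delta(H)\sim\tfrac{5}{9}\binom{n-1}{2}$, well above $\tfrac12\binom{n-1}{2}+5(n-2)$. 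Yet $H$ is not an edge-colored subgraph of any $H^*(\vec W,\vec b)$: in $H^*$ the three color-profiles differ from a common $\vec b$ by a single $+1$ in pairwise distinct coordinates, so in each coordinate the larger of the two occurring values appears exactly once, whereas in $\{(1,1,1),(2,0,1),(2,1,0)\}$ the value $2$ appears \emph{twice} in the first coordinate---and no relabeling of parts or colors repairs this. Thus the $\phi$-scenario survives under the stated hypotheses, the lemma as stated is false, and neither your route nor the paper's can be completed without either strengthening the degree hypothesis or enlarging the target family to include the ``dual'' structures governed by the invariant $\vec{\#}(e)+\vec 1(e)$.
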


In the remainder of this section, we start by establishing several lemmas that describe the local structures of $k$-graphs with high minimum degrees but without any good gadgets.
Following that, we present the proof of Lemma~\ref{lem:key1} in the next subsection.
Finally, we conclude this section with a lemma that describes the minimum vertex degree of the extremal $k$-graphs under consideration.

\subsection{Local structures}\label{subsec:local}
Now, we proceed to prove a series of three lemmas, with each lemma building upon the previous one.
These lemmas are aimed at describing the local structure of hypergraphs using the terminologies provided in Definitions~\ref{Def:2vtx} and \ref{Def:type}.

\begin{lem}\label{lem1}
Let $n,k, r$ be positive integers with $n>2k$ and $r\geq 2$.
Let $H$ be an $n$-vertex $k$-graph and let $c: E(H)\rightarrow\{C_1,C_2,\ldots,C_r\}$.
Assume that $H$ does not contain any good gadgets of any kind.
Let $\{u,v\}\in \binom{V(H)}{2}$ satisfy
\begin{equation}\label{equ:NuCAPNv}
|N_H(u)\cap N_H(v)|> \binom{n-2}{k-1}-\binom{(n-2)-(k-1)}{k-1}+1.
\end{equation}
Then there exists $\mathbf{U}=\bfS$ or $\bfi\bfj$ for some $i\neq j$ such that
$uTv$ is $\mathbf{U}$ for all $T\in N_H(u)\cap N_H(v)$.
\end{lem}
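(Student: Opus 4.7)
The plan is to partition the common-neighborhood family $\mathcal{N}:=N_H(u)\cap N_H(v)$ according to the types of Definition~\ref{Def:2vtx}: let $\mathcal{F}_{\bfS}$ collect those $T\in\mathcal{N}$ for which $uTv$ is $\bfS$, and for each ordered pair $(i,j)$ with $i\neq j$ let $\mathcal{F}_{(i,j)}$ collect those $T\in\mathcal{N}$ with $c(T\cup\{u\})=C_i$ and $c(T\cup\{v\})=C_j$. Each $T\in\mathcal{N}$ belongs to exactly one such class, so they partition $\mathcal{N}$, and the lemma amounts to showing that precisely one class is non-empty. I will argue by contradiction.

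Suppose at least two classes are non-empty. Pick any such class and call it $\mathcal{A}$, and let $\mathcal{B}$ be the union of all the other non-empty classes, so that $\mathcal{A},\mathcal{B}$ are disjoint non-empty subfamilies of $\binom{V(H)\backslash\{u,v\}}{k-1}$ with $|\mathcal{A}|+|\mathcal{B}|=|\mathcal{N}|$. Since $n>2k$, I may invoke Theorem~\ref{cross} with $m=n-2$ and $\ell=k-1$: if $\mathcal{A}$ and $\mathcal{B}$ were cross-intersecting, then $|\mathcal{A}|+|\mathcal{B}|\leq \binom{n-2}{k-1}-\binom{n-k-1}{k-1}+1$, directly contradicting~(\ref{equ:NuCAPNv}). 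Hence there must exist disjoint $T_1\in\mathcal{A}$ and $T_2\in\mathcal{B}$ lying in two distinct classes.

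The final step is to verify that such a pair $(T_1,T_2)$ forms a Good Gadget of the first kind, contradicting the hypothesis on $H$. Writing $a_p:=c(T_p\cup\{u\})$ and $b_p:=c(T_p\cup\{v\})$ for $p\in\{1,2\}$, the two perfect matchings $\{T_1u,T_2v\}$ and $\{T_1v,T_2u\}$ of $H[\{u,v\}\cup T_1\cup T_2]$ have color profiles encoded by the multisets $\{a_1,b_2\}$ and $\{b_1,a_2\}$. A short case analysis shows that these multisets coincide only when either both $T_p\in\mathcal{F}_{\bfS}$ (so $a_p=b_p$ for each $p$, yielding $\{a_1,b_2\}=\{a_1,a_2\}=\{b_1,a_2\}$) or $T_1,T_2$ both lie in a common class $\mathcal{F}_{(i,j)}$ (so $(a_1,b_1)=(a_2,b_2)=(C_i,C_j)$) --- precisely the cases excluded by our choice of $T_1,T_2$ from different classes. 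Thus the color profiles differ, producing Good Gadget~\ref{gad1} and the desired contradiction.

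I anticipate the only delicate step to be the multiset case analysis in the last paragraph, where one must rule out every mixing of $\mathcal{F}_{\bfS}$ with some $\mathcal{F}_{(i,j)}$ and every pairing of two distinct classes $\mathcal{F}_{(i,j)}$ and $\mathcal{F}_{(\ell,m)}$; in the latter case, multiset equality $\{C_i,C_m\}=\{C_j,C_\ell\}$ forces $(i,j)=(\ell,m)$ once $i\neq j$ is used, so different classes always yield different profiles. The Hilton--Milner application itself is routine, since~(\ref{equ:NuCAPNv}) is calibrated precisely to strictly exceed the cross-intersecting ceiling and leaves no slack.
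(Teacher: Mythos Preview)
Your proposal is correct and follows essentially the same approach as the paper: partition $N_H(u)\cap N_H(v)$ by type, use the Hilton--Milner bound (Theorem~\ref{cross}) to extract a disjoint pair from distinct classes, and exhibit a Good Gadget of the first kind. The only difference is organizational: the paper splits the argument into two stages (first $\mathcal{F}_{\bfS}$ versus its complement, then among the $\mathcal{F}_{(i,j)}$), whereas you handle all class pairs in a single unified case analysis; your multiset verification $\{a_1,b_2\}=\{b_1,a_2\}\Rightarrow$ same class is the cleaner packaging of the paper's two separate color-profile checks.
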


\begin{proof}
We first claim that if there exists $T_1\in N_H(u)\cap N_H(v)$ such that $uT_1v$ is $\bfS$,
then for all $T\in N_H(u)\cap N_H(v)$, $uTv$ is $\bfS$.
Suppose this is not the case.
Then $\mathcal{A}:=\{T\in N_H(u)\cap N_H(v): uTv \mbox{ is } \bfS\}$ and $\mathcal{B}:=\{T\in N_H(u)\cap N_H(v): uTv \mbox{ is not } \bfS\}$ both are non-empty families contained in $\binom{V(H)\backslash \{u,v\}}{k-1}$.
So $|\mathcal{A}|+|\mathcal{B}|=|N_H(u)\cap N_H(v)|$ satisfies \eqref{equ:NuCAPNv}.
By Lemma~\ref{cross}, there exist two disjoint $(k-1)$-subsets $R_1, R_2\in N_H(u)\cap N_H(v)$ such that $uR_1v$ is $\bfS$ and $uR_2v$ is not $\bfS$.
So $uR_2v$ is $\bfi\bfj$ for some $1\leq i\neq j\leq r$.
Then $\{uR_1,vR_2\}$ and $\{vR_1,uR_2\}$ are two perfect matchings of $H[R_1\cup R_2\cup \{u,v\}]$ with different color profiles.
This gives a good gadget of the first kind, a contradiction.

By the this claim, we may assume that for every $T\in N_H(u)\cap N_H(v)$, $uTv$ is not $\bfS$.
We need to show they must belong to the same kind, say $\bfi\bfj$ for some $i\neq j$.
Suppose for a contradiction that this is not the case.
Then using Lemma~\ref{cross} (and the argument in the previous paragraph) again,
we see that there exist two disjoint $(k-1)$-subsets $R_1, R_2\in N_H(u)\cap N_H(v)$ such that $uR_1v$ is $\bfi\bfj$ and $uR_2v$ is not $\bfi\bfj$.
We may assume that $uR_2v$ is $\bfl\bfm$, where either $i\neq \ell$ or $j\neq m$.
It is evident to see that in either case, $\{uR_1,vR_2\}$ and $\{vR_1,uR_2\}$ are two perfect matchings of $H[T_1\cup T_2\cup \{u,v\}]$ with different color profiles, again a contradiction.
This completes the proof.
\end{proof}

The subsequent lemma shows that under some mild assumptions on hypergraphs, each ordered pair of vertices possesses a unique type.

\begin{lem}\label{lem:2vtx}
Let $n, k, r$ be integers with $n\geq 2k^2$ and $r\geq 2$.
Let $H$ be an $n$-vertex $k$-graph and let $c: E(H)\rightarrow\{C_1,C_2,\ldots,C_r\}$.
Assume that $H$ does not contain any good gadgets of any kind and
\begin{equation}\label{equ:min-deg}
\delta(H)>\frac{1}{2}\binom{n-1}{k-1}+\frac{k^2+1}{2}\binom{n-2}{k-2}.
\end{equation}
Then the following hold:
\begin{itemize}
\item[(A).] Every ordered pair $(u,v)$ of $V(H)$ has a unique type, which is either type $\bfS$ or type $\bfi\bfj$ for some $1\leq i\neq j\leq r$;
\item[(B).] If $c$ is not monochromatic, then there exists $\{u,v\}\in \binom{V(H)}{2}$ such that $(u,v)$ is not type $\bfS$.
\end{itemize}
\end{lem}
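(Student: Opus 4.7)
The plan is to deduce (A) directly from Lemma~\ref{lem1} once we verify that every pair has a large common neighborhood, and to establish (B) by contradiction via a good gadget of the third kind (Good Gadget~\ref{gad3}).

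For (A), I would first show that $|N_H(u)\cap N_H(v)|>k^2\binom{n-2}{k-2}$ for every distinct $u,v\in V(H)$. Indeed, since $N_H(u)\cap N_H(v)\subseteq\binom{V(H)\setminus\{u,v\}}{k-1}$ while at most $\binom{n-2}{k-2}$ sets in each of $N_H(u), N_H(v)$ contain the other endpoint, inclusion--exclusion inside $\binom{V(H)\setminus\{u,v\}}{k-1}$ yields the lower bound $2\delta(H)-2\binom{n-2}{k-2}-\binom{n-2}{k-1}$, which by \eqref{equ:min-deg} and $\binom{n-1}{k-1}=\binom{n-2}{k-1}+\binom{n-2}{k-2}$ exceeds $k^2\binom{n-2}{k-2}$. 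For $n\geq 2k^2$ this comfortably dominates the right-hand side of \eqref{equ:NuCAPNv}, so Lemma~\ref{lem1} applies and produces a unique kind $\mathbf{U}$ (either $\bfS$ or $\bfi\bfj$ for some $i\neq j$) such that $uTv$ is $\mathbf{U}$ for \emph{every} $T\in N_H(u)\cap N_H(v)$. Since there are more than $k^2\binom{n-2}{k-2}$ such $T$, the pair $(u,v)$ has type $\mathbf{U}$ by Definition~\ref{Def:type}; and because no $T$ produces any other kind, no other type is possible, giving (A).

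For (B), I would argue by contradiction. Assume every ordered pair of $V(H)$ has type $\bfS$ while $c$ is not monochromatic, and fix edges $e,f\in E(H)$ with $c(e)\neq c(f)$. Put $|V(e)\cap V(f)|=k-\ell$ for some $1\leq\ell\leq k$ and list $V(e)\setminus V(f)=\{u_1,\ldots,u_\ell\}$, $V(f)\setminus V(e)=\{v_1,\ldots,v_\ell\}$. I would then pick pairwise disjoint $(k-1)$-sets $T_1,\ldots,T_\ell$ greedily so that $T_i\in N_H(u_i)\cap N_H(v_i)$, the configuration $u_iT_iv_i$ is $\bfS$, and $T_i$ avoids $V(e)\cup V(f)\cup T_1\cup\cdots\cup T_{i-1}$. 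At step $i$, the forbidden set has size at most $(k+\ell)+(i-1)(k-1)\leq\ell k+1$; since $u_i\in V(e)$ and $v_i\in V(f)$ cannot belong to $T_i$ in any case, the effective portion outside $\{u_i,v_i\}$ has size at most $\ell k-1\leq k^2-1<k^2$, so the observation following Definition~\ref{Def:type} (applied to the type-$\bfS$ pair $(u_i,v_i)$) supplies the required $T_i$. Because $u_iT_iv_i$ is $\bfS$, we have $c(u_iT_i)=c(v_iT_i)$ for every $i$, and hence the two perfect matchings
\[M_1:=\{u_1T_1,\ldots,u_\ell T_\ell,f\}\quad\text{and}\quad M_2:=\{v_1T_1,\ldots,v_\ell T_\ell,e\}\]
of $G:=H[V(e)\cup V(f)\cup T_1\cup\cdots\cup T_\ell]$ have color profiles that agree on the $\ell$ paired contributions but differ exactly by the swap of $c(f)$ for $c(e)$, so $\cp(M_1)\neq\cp(M_2)$. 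This exhibits $G$ as a good gadget of the third kind, contradicting the assumption on $H$.

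The main bookkeeping task lies in the $k^2$ slack required in both parts. In (A), the additive term $\frac{k^2+1}{2}\binom{n-2}{k-2}$ in \eqref{equ:min-deg} is calibrated precisely so that the common-neighborhood estimate clears the Hilton--Milner threshold in \eqref{equ:NuCAPNv}; in (B), the delicate observation is that the boundary vertices $u_i,v_i$ already lie in $V(e)\cup V(f)$, which is exactly what keeps the effective forbidden set at step $i$ below $k^2$ and lets the standard observation after Definition~\ref{Def:type} supply a suitable $T_i$. Once these two size computations are in place, both parts of the lemma fall out of the structural results in Section~\ref{subsec:local} with little further work.
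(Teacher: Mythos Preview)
Your proof of (A) is essentially identical to the paper's: both establish $|N_H(u)\cap N_H(v)|>k^2\binom{n-2}{k-2}$ by inclusion--exclusion, observe this exceeds the Hilton--Milner threshold of \eqref{equ:NuCAPNv}, and invoke Lemma~\ref{lem1}.

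Your proof of (B) is correct but takes a slightly different (and in fact cleaner) route. The paper first applies Theorem~\ref{cross} to the families of $C_1$-edges and non-$C_1$-edges, using an edge-count estimate (which is where $n\geq 2k^2$ enters) to find two \emph{disjoint} edges $e,f$ of different colors, and then builds the third-kind gadget with $\ell=k$. You instead take any two edges of different colors, allow them to overlap, and build the gadget for the resulting $\ell\leq k$; your careful bookkeeping showing the effective forbidden set has size at most $\ell k-1<k^2$ is precisely what is needed to invoke the observation after Definition~\ref{Def:type}. This bypasses the second use of Hilton--Milner and the associated numerical check on $e(H)$, at the cost of spelling out the greedy selection more explicitly. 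Both arguments hit the same $k^2$ ceiling in the final step, so neither improves the hypothesis quantitatively.
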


\begin{proof}
For (A), consider any $\{u,v\}\in \binom{V(H)}{2}$.
Since $N_H(u)\cap N_H(v)$ is contained in $\binom{V(H)\backslash \{u,v\}}{k-1}$, using \eqref{equ:min-deg} we can derive that $|N_H(u)\cap N_H(v)|$ is at least
\begin{align*}
&\left|N_H(u)-\{v\}\right|+\left|N_H(v)-\{u\}\right|-\binom{n-2}{k-1}\geq 2\left(\delta(H)-\binom{n-2}{k-2}\right)-\binom{n-2}{k-1}\\
> & k^2\binom{n-2}{k-2}>\sum_{i=1}^{k-1}\binom{n-i-2}{k-2}+1= \binom{n-2}{k-1}-\binom{(n-2)-(k-1)}{k-1}+1.
\end{align*}
That is, $|N_H(u)\cap N_H(v)|> k^2\binom{n-2}{k-2}$ and it satisfies \eqref{equ:NuCAPNv}.
By Lemma~\ref{lem1}, there exists $\mathbf{U}=\bfS$ or $\bfi\bfj$ for some $i\neq j$ such that
$uTv$ is $\mathbf{U}$ for all $T\in N_H(u)\cap N_H(v)$.
Hence, $(u,v)$ has a unique type, which is either type $\bfS$ or type $\bfi\bfj$.

To show (B), suppose for a contradiction that any pair $(u,v)$ of $V(H)$ is type $\bfS$.
Assume that $H$ contains a $C_1$-edge.
Let $\mathcal{A}$ be the family consisting of all $C_1$-edges in $H$ and $\mathcal{B}=E(H)\backslash \mathcal{A}$.
Since $c$ is not monochromatic, $\mathcal{A}$ and $\mathcal{B}$ are non-empty.
By \eqref{equ:min-deg} and since $n\geq 2k^2$, it is easy to see that
$$|\mathcal{A}|+|\mathcal{B}|=e(H)>\left(\frac12\binom{n-1}{k-1}+1\right)\frac{n}{k}\geq k\binom{n-1}{k-1}+2k\geq \binom{n}{k}-\binom{n-k}{k}+2.$$
By Lemma~\ref{cross}, there exist two disjoint edges $e,f$ in $H$ such that $e$ is a $C_1$-edge and $f$ is not.
Write $V(e)=\{u_1,\ldots,u_k\}$ and $V(f)=\{v_1,\ldots,v_k\}$.
Since every $(u_i,v_i)$ is type $\bfS$ for $i\in [k]$,
by the observations after Definition~\ref{Def:type},
there exist $T_i\in N_H(u_i)\cap N_H(v_i)$ for $i\in [k]$ which are pairwise disjoint.
Then $\{u_1T_1,\ldots,u_kT_k,f\}$ and $\{v_1T_1,\ldots,v_kT_k,e\}$ are two perfect matchings of $H_0=H[V(e)\cup V(f)\cup (\bigcup_{i=1}^k T_i)]$ with different color profiles,
which shows that $H_0$ is a good gadget of the third kind, a contradiction.
This finishes the proof.
\end{proof}

The final lemma in this subsection characterizes all possible pairwise type situations between any three vertices.

\begin{lem}\label{lem:3vtx}
Let $n,k, r$ be positive integers with $n\geq 2k^2$ and $r\geq 2$.
Let $H$ be an $n$-vertex $k$-graph and let $c: E(H)\rightarrow\{C_1,C_2,\ldots,C_r\}$.
Assume that $H$ satisfies \eqref{equ:min-deg} and does not contain any good gadgets of any kind.
Then the following hold for any $\{u_1,u_2,u_3\}\in \binom{V(H)}{3}$.
\begin{itemize}
\item[(A).] If $(u_1,u_2)$ is type $\bfS$ and $(u_1,u_3)$ is type $\bfi\bfj$, then $(u_2,u_3)$ is also type $\bfi\bfj$;
\item[(B).] If every $(u_i,u_j)$ is not type $\bfS$ for $1\leq i<j\leq 3$, then there exist distinct $\alpha, \beta, \gamma\in [r]$
such that one of the following happens: 
\begin{itemize}
    \item[(a)] $(u_1,u_2)$ is type ${\bf C_\alpha C_\beta}$, $(u_2,u_3)$ is type ${\bf C_\beta C_\gamma}$, and $(u_1,u_3)$ is type ${\bf C_\alpha C_\gamma}$;
    \item[(b)] $(u_1,u_2)$ is type ${\bf C_\beta C_\alpha}$, $(u_2,u_3)$ is type ${\bf C_\gamma C_\beta}$, and $(u_1,u_3)$ is type ${\bf C_\gamma C_\alpha}$;
\end{itemize}
\end{itemize}
\end{lem}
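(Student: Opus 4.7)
The plan is to prove both (A) and (B) by assuming the conclusion fails and constructing a good gadget, contradicting the no-gadget hypothesis. Throughout, Lemma~\ref{lem:2vtx}(A) guarantees every pair in $\{u_1,u_2,u_3\}$ has a unique type (either $\bfS$ or some $\mathbf{C_pC_q}$); and the observations after Definition~\ref{Def:type}, together with the lower bound $|N_H(u)\cap N_H(v)|>k^2\binom{n-2}{k-2}$ forced by \eqref{equ:min-deg}, let us always pick pairwise disjoint $(k-1)$-set witnesses of any prescribed type.

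For (A), suppose toward contradiction that $(u_2,u_3)$ is not type $\bfi\bfj$; by Lemma~\ref{lem:2vtx}(A) its unique type is then either $\bfS$ or $\bfl\bfm$ for some $(\ell,m)\neq(i,j)$. Pick pairwise disjoint $T_1\in N_H(u_2)\cap N_H(u_3)$, $T_2\in N_H(u_1)\cap N_H(u_3)$, and $T_3\in N_H(u_1)\cap N_H(u_2)$ realizing, respectively, the deduced type, $\bfi\bfj$, and $\bfS$, and consider the resulting candidate good gadget of the second kind. Its two canonical matchings have color multisets $\{c(T_1u_2),C_j,c(T_3u_1)\}$ and $\{c(T_1u_3),C_i,c(T_3u_2)\}$; since $u_1T_3u_2$ is $\bfS$, the $T_3$-terms cancel and the comparison reduces to $\{c(T_1u_2),C_j\}$ versus $\{c(T_1u_3),C_i\}$. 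In the $\bfl\bfm$ sub-case this is $\{C_\ell,C_j\}$ versus $\{C_m,C_i\}$, unequal because $(\ell,m)\neq(i,j)$; in the $\bfS$ sub-case it is $\{C_p,C_j\}$ versus $\{C_p,C_i\}$, unequal because $i\neq j$. Either way we obtain a good gadget of the second kind, the desired contradiction.

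For (B), Lemma~\ref{lem:2vtx}(A) gives each pair a unique non-$\bfS$ type; write $(u_1,u_2)=\mathbf{C_\alpha C_\beta}$, $(u_1,u_3)=\mathbf{C_\gamma C_\delta}$, $(u_2,u_3)=\mathbf{C_\epsilon C_\zeta}$. Building the analogous second-kind gadget yields matching color multisets $\{C_\alpha,C_\delta,C_\epsilon\}$ and $\{C_\beta,C_\gamma,C_\zeta\}$, which must coincide. Using $\alpha\neq\beta$, $\gamma\neq\delta$, and $\epsilon\neq\zeta$, a direct case analysis on the multiset equality leaves exactly two possibilities: either the desired triangle pattern, with $\alpha=\gamma$, $\delta=\zeta$, $\epsilon=\beta$, giving three distinct colors $\alpha,\beta,\delta$ and $(u_1,u_2)=\mathbf{C_\alpha C_\beta}$, $(u_2,u_3)=\mathbf{C_\beta C_\delta}$, $(u_1,u_3)=\mathbf{C_\alpha C_\delta}$; or a \emph{Latin-square} pattern, with $\alpha=\zeta$, $\delta=\beta$, $\epsilon=\gamma$, yielding $(u_1,u_2)=\mathbf{C_\alpha C_\beta}$, $(u_1,u_3)=\mathbf{C_\gamma C_\beta}$, $(u_2,u_3)=\mathbf{C_\gamma C_\alpha}$ with three distinct colors.

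The main obstacle is ruling out the Latin-square configuration, which is invisible to the second-kind gadget on $\{u_1,u_2,u_3\}$ (both multisets equal $\{C_\alpha,C_\beta,C_\gamma\}$). My plan here is to switch to a good gadget of the third kind with $\ell=1$. Pick $T\in N_H(u_1)\cap N_H(u_2)$ with $u_3\notin T$ and $T'\in N_H(u_1)\cap N_H(u_3)$ with $u_2\notin T'$, satisfying $|T\cap T'|=k-2$ (which exists by pigeonhole on $(k-2)$-subsets of members of these two overlap neighborhoods, both of size $\gg\binom{n-2}{k-2}$). Let $s\in T\setminus T'$ and $t\in T'\setminus T$ be the resulting distinct singletons in $V(H)\setminus\{u_1,u_2,u_3\}$, and pick $T^*\in N_H(s)\cap N_H(t)$ disjoint from $T\cup T'\cup\{u_1\}$ (existing by \eqref{equ:min-deg}). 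Setting $e:=T\cup\{u_1\}$ and $f:=T'\cup\{u_1\}$, the types of $(u_1,u_2)$ and $(u_1,u_3)$ give $c(e)=C_\alpha$ and $c(f)=C_\gamma$; also $V(e)\setminus V(f)=\{s\}$ and $V(f)\setminus V(e)=\{t\}$, so these two edges together with $T^*$ form a third-kind gadget. Its two matchings have color multisets $\{c(sT^*),C_\gamma\}$ and $\{c(tT^*),C_\alpha\}$, which coincide only when the pair $(s,t)$ has type $\mathbf{C_\alpha C_\gamma}$. The proof is then completed by showing, via a pigeonhole/counting argument based on the large sizes of $N_H(u_1)\cap N_H(u_2)$ and $N_H(u_1)\cap N_H(u_3)$ from \eqref{equ:min-deg}, that $T,T'$ can always be chosen so that $(s,t)$ is of another type; this final existence check, exploiting the strong constraints the Latin-square hypothesis places on the possible pairwise types in the overlap neighborhoods, is the most delicate technical step.
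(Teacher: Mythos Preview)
Your proof of (A) is correct and is essentially the paper's own argument: both build a second-kind gadget on $\{u_1,u_2,u_3\}$ and, after cancelling the $\bfS$-contribution from $T_3$, compare $\{c(T_1u_2),C_j\}$ with $\{c(T_1u_3),C_i\}$.

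For (B) your analysis is actually \emph{more} careful than the paper's. After establishing the multiset identity for the two matchings of the second-kind gadget, the paper notes only that every colour occurs exactly twice in the six-element multiset $L$ and then writes ``Now it is easy to see that the statement of (B) holds.'' You correctly observe that the multiset equality $\{C_\alpha,C_\delta,C_\epsilon\}=\{C_\beta,C_\gamma,C_\zeta\}$, together with $\alpha\neq\beta$, $\gamma\neq\delta$, $\epsilon\neq\zeta$, leaves \emph{two} configurations, the desired triangle and the ``Latin-square'' pattern $(u_1,u_2)=\mathbf{C_\alpha C_\beta}$, $(u_1,u_3)=\mathbf{C_\gamma C_\beta}$, $(u_2,u_3)=\mathbf{C_\gamma C_\alpha}$; and that the latter does not satisfy the conclusion of (B). So you have located a point the paper's argument does not explicitly resolve.

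However, your proposed elimination of the Latin-square case is only a sketch, and the sketch has real gaps. First, the existence of $T\in N_H(u_1)\cap N_H(u_2)$ and $T'\in N_H(u_1)\cap N_H(u_3)$ with $|T\cap T'|=k-2$ is not delivered by the pigeonhole you invoke: hypothesis \eqref{equ:min-deg} gives only $|N_H(u_1)\cap N_H(u_2)|,\ |N_H(u_1)\cap N_H(u_3)|>k^2\binom{n-2}{k-2}$, which is a vanishing fraction of $\binom{n-3}{k-1}$, and by Kruskal--Katona two $(k-1)$-uniform families of this size can have disjoint $(k-2)$-shadows. Second, even granting such $T,T'$, you offer no argument that some choice yields $(s,t)$ \emph{not} of type $\mathbf{C_\alpha C_\gamma}$; this is a global constraint on an arbitrary pair, and nothing in the Latin-square hypothesis visibly restricts the types available to such $(s,t)$. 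Labelling this ``the most delicate technical step'' and omitting it leaves the Latin-square case unresolved in your write-up. In short: for (B) you have diagnosed the issue the paper glosses over, but your repair is not yet a proof.
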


\begin{proof}
For (A), suppose that $(u_2,u_3)$ is not type $\bfi\bfj$.
By Lemma~\ref{lem:2vtx}, $(u_2,u_3)$ is type $\bfS$ or $\bfl\bfm$ where $\ell\neq i$ or $m\neq j$.
Then there exist three disjoint sets $T_3\in N_{H}(u_1)\cap N_H(u_2)$, $T_2\in N_{H}(u_1)\cap N_H(u_3)$, and $T_1\in N_{H}(u_2)\cap N_H(u_3)$ such that
$$\mbox{$u_1T_3u_2$ is $\bfS$, ~ $u_1T_2u_3$ is $\bfi\bfj$, ~ and $u_2T_1u_3$ is either $\bfS$ or $\bfl\bfm$.}$$
We claim that in either case, $\{u_1T_3,u_2T_1,u_3T_2\}$ and $\{u_2T_3,u_3T_1,u_1T_2\}$ are two perfect matchings of $H_1=H[\{u_1,u_2,u_3\}\cup T_1\cup T_2\cup T_3]$ with two different color profiles.
This is easy to see when $u_2T_1u_3$ is $\bfS$.
Now consider when $u_2T_1u_3$ is $\bfl\bfm$.
By symmetry between $\ell$ and $m$, we may assume that $\ell\neq i$.
In this case, we have $c(u_1T_3)=c(u_2T_3)$, and $c(u_2T_1)=\ell\notin \{m,i\}=\{c(u_3T_1),c(u_1T_2)\}$,
confirming the claim.
Hence $H_1$ is a good gadget of the second kind, a contradiction.


Finally we consider (B).
Suppose that $(u_1,u_2)$ is type $\bfi\bfj$, $(u_1,u_3)$ is type $\bfl\bfm$ and $(u_2,u_3)$ is type ${\bf C_gC_h}$.
Then we may choose three disjoint sets $T_3\in N_H(u_1)\cap N_H(u_2)$, $T_2\in N_H(u_1)\cap N_H(u_3)$ and $T_1\in N_H(u_2)\cap N_H(u_3)$
such that
\[\mbox{$u_1T_3u_2$ is $\bfi\bfj$, ~ $u_1T_2u_3$ is $\bfl\bfm$,~ and $u_2T_1u_3$ is ${\bf C_gC_h}$.}\]
Consider the multi-set $L=\{c(u_1T_3),c(u_1T_2),c(u_2T_1),c(u_2T_3),c(u_3T_1),c(u_3T_2)\}$.
If there exists one color appearing in $L$ once or three times,
then clearly, $\{u_1T_3,u_2T_1,u_3T_2\}$ and $\{u_2T_3,u_3T_1,u_1T_2\}$ are two perfect matchings of $H[\{u_1,u_2,u_3\}\cup T_1\cup T_2\cup T_3]]$ with different color profiles, a contradiction.
If a color appears at least four times, then one of $(u_1, u_2)$, $(u_1, u_3)$ and $(u_2, u_3)$ must be type $\mathbf{S}$, a contradiction.
So every color must appear exactly two times in $L$ and thus $L$ contains exactly three distinct colors. 
Furthermore, for every color appears in $L$, it must appears in $\{c(u_1T_3), c(u_2T_1), c(u_3T_2)\}$ and $\{c(u_2T_3), c(u_3T_1), c(u_1T_2)\}$ once respectively.
By definition, we have $(c(u_1T_3), c(u_2T_1), c(u_3T_2))=(\mathbf{C_i},\mathbf{C_g},\mathbf{C_m})$ and $(c(u_2T_3), c(u_3T_1), c(u_1T_2)) =(\mathbf{C_j},\mathbf{C_h},\mathbf{C_\ell})$, then
\begin{equation}\label{equ:triangle}
\{\mathbf{C_i},\mathbf{C_g},\mathbf{C_m}\}=\{\mathbf{C_j},\mathbf{C_h},\mathbf{C_\ell}\},
\end{equation}
implying that $\mathbf{C_i}=\mathbf{C_j}$ or $\mathbf{C_i}=\mathbf{C_h}$ or $\mathbf{C_i}=\mathbf{C_\ell}$.
Since $(u_1, u_2)$ is not type $\mathbf{S}$, then $\mathbf{C_i}\neq \mathbf{C_j}$.
So either $\mathbf{C_i} = \mathbf{C_\ell}$ or $\mathbf{C_i} = \mathbf{C_h}$.
If $\mathbf{C_i} = \mathbf{C_\ell}$, then suppose $(u_1, u_2)$ is type $\mathbf{C_\alpha} \mathbf{C_\beta}$, $(u_1,u_3)$ is type $\mathbf{C_\alpha} \mathbf{C_\gamma}$.
Since $(u_1, u_3),\ (u_2, u_3)$ are not type $\mathbf{S}$, we have $\alpha,\beta,\gamma$ are distinct.
By \eqref{equ:triangle}, we have $(u_2, u_3)$ is type $\mathbf{C_\beta} \mathbf{C_\gamma}$, which means item $(a)$ happens.
Otherwise we have $\mathbf{C_i} = \mathbf{C_h}$. 
In this case, suppose that $(u_1, u_2)$ is type $\mathbf{C_\beta} \mathbf{C_\alpha}$, $(u_2,u_3)$ is type $\mathbf{C_\gamma} \mathbf{C_\beta}$. 
Since $(u_1, u_3),\ (u_2, u_3)$ are not type $\mathbf{S}$, we have $\alpha,\beta,\gamma$ are distinct.
By \eqref{equ:triangle}, we have $(u_1, u_3)$ is type $\mathbf{C_\gamma} \mathbf{C_\alpha}$, which means item $(b)$ happens.
This completes the proof of Lemma \ref{lem:key}.
\end{proof}

\subsection{Proof of Lemma~\ref{lem:key1}}
In this subsection, we prove Lemma~\ref{lem:key1}.
It suffices to establish the following technically equivalent lemma.
Under reasonable assumptions, it not only yields a useful partition of the vertex set of $k$-graphs but also provides crucial information regarding the precise locations and colors of all edges.

\begin{lem}\label{lem:key}
Let $n,k, r$ be positive integers with $n\geq 2k^2$ and $r\geq 2$.
Let $H$ be an $n$-vertex $k$-graph and let $c: E(H)\rightarrow\{C_1,C_2,\ldots,C_r\}$ such that
$H$ has at least one $C_i$-edge for all $i\in [r]$.
Assume that $H$ does not contain any good gadgets of any kind and
\begin{equation*}
\delta(H)>\frac{1}{2}\binom{n-1}{k-1}+\frac{k^2+1}{2}\binom{n-2}{k-2}.
\end{equation*}
Then by possibly renaming the colors, there exist a partition $V(H)=V_1\cup V_2\cup \ldots \cup V_r$  such that the following hold.
\begin{itemize}
\item[(A).] For all $i\in [r]$, every $(u,v)\in V_i\times V_i$ is type $\bfS$;
\item[(B).] One of the following holds:
\begin{itemize}
\item [(B1)] For all $1\leq i<j\leq r$, every $(u,v)\in V_i\times V_j$ is type ${\bf C_{i} C_{j}}$;
\item [(B2)] For all $1\leq i<j\leq r$, every $(u,v)\in V_i\times V_j$ is type ${\bf C_{j} C_{i}}$; and
\end{itemize}
\item[(C).] 
If item (B1) holds, then
there exists non-negative integers $a_1,a_2,\ldots, a_r$ with $\sum_{i=1}^r a_i=k-1$ such that
for all $i\in [r]$, every $C_i$-edge $e$ of $H$ satisfies that $|V(e)\cap V_i|=a_i+1$ and $|V(e)\cap V_j|=a_j$ for every $j\in [r]\backslash \{i\}$.\\
If item (B2) holds, then there exists positive integers $b_1,b_2,\ldots, b_r$ with $\sum_{i=1}^r b_i=k+1$ such that for all $i\in [r]$, every $C_i$-edge $e$ of $H$ satisfies that $|V(e)\cap V_i|=b_i-1$ and $|V(e)\cap V_j|=b_j$ for every $j\in [r]\backslash \{i\}$.
\end{itemize}
\end{lem}

\begin{proof}
Consider $H$ and $c$ as given in the lemma.
It is convenient for us to define an auxiliary graph $G_H$, where it has the vertex set $V(H)$ and the edge set
$$E(G_H)=\{uv\ |\ (u,v) \mbox{ is not type } \bfS\}.$$
By Lemma \ref{lem:2vtx} (B), we see that $E(G_H)\neq \emptyset.$
We first claim that $G_H$ is connected.
Suppose not. Then there exist two connected components say $D_1, D_2$ of $G_H$ such that $D_1$ contains an edge say $u_1u_2$.
Let $v$ be a vertex in $D_2$.
By definition, $(u_1,u_2)$ is not type $\bfS$ and $(u_i,v)$ is type $\bfS$ for each $i=1,2$.
However, this contradicts Lemma~\ref{lem:3vtx} (A),
completing the proof of the claim.

For any $\{v_1,v_2,v_3\}\in \binom{V(H)}{3}$, if $(v_1,v_2)$ and $(v_1,v_3)$ are type $\bfS$, then by Lemma~\ref{lem:3vtx} (A) again,
we see that $(v_2,v_3)$ must be type $\bfS$ as well.
By repeatedly utilizing this fact, it can be deduced that the complement of $G_H$ comprises vertex-disjoint cliques.
In other words, $G_H$ can be represented as a complete multipartite graph, say with parts $V_1, \ldots, V_s$.

We first consider $s\geq 3$. 
Suppose that there are fixed vertices $u_i\in V_i$ for each $i\in [s]$.
By applying Lemma~\ref{lem:3vtx} (B), by possibly renaming the colors, one of the following happens:
\begin{itemize}
\item [(1)] $(u_1,u_2)$ is type $\mathbf{C_1}\mathbf{C_2}$, $(u_1,u_3)$ is type $\mathbf{C_1}\mathbf{C_3}$;
\item [(2)]  $(u_1,u_2)$ is type $\mathbf{C_2}\mathbf{C_1}$, $(u_1,u_3)$ is type $\mathbf{C_3}\mathbf{C_1}$;
\end{itemize}
Suppose item $(1)$ happens. 
In this case, for $4\leq j\leq s$, since $(u_1, u_2),\ (u_1, u_j)$ and $(u_2,u_j)$ are not type $\mathbf{S}$.
Then apply Lemma~\ref{lem:3vtx} (B) again, $(u_1, u_j)$ is type $\mathbf{C_1}\mathbf{C_\alpha}$ or $\mathbf{C_\beta}\mathbf{C_2}$ for some $\alpha, \beta \in \{3,4,\ldots,r\}$.
But if $(u_1, u_j)$ is type $\mathbf{C_\beta}\mathbf{C_2}$, then $(u_1, u_3)$, $(u_1,u_j)$ and $(u_3,u_j)$ are not type $\mathbf{S}$ and do not satisfy any case of Lemma~\ref{lem:3vtx} (B), a contradiction.
So $(u_1, u_j)$ is type $\mathbf{C_1}\mathbf{C_\alpha}$.

Now let $2\leq t\leq s$ be the largest integer such that we can rename the colors letting $(u_1,u_j)$ be type $\mathbf{C_1}\mathbf{C_j}$ for $2\leq j\leq t$.
Since item $(1)$ happens, we have $t\geq 3$.
If $t < s$, then after possibly renaming the colors, we have $(u_1, u_j)$ is type $\mathbf{C_1}\mathbf{C_j}$ for $2\leq j\leq t$.
Suppose that $(u_1,u_{t+1})$ is type $\mathbf{C_1}\mathbf{C_\alpha}$.
By Lemma \ref{lem:3vtx} (B), since $(u_1,u_j)$, $(u_1,u_{t+1})$ and $(u_j,u_{t+1})$ are not type $\mathbf{S}$ for $2\leq j\leq t$, we have $\alpha\neq j$.
So $\alpha \notin\{1,2,\ldots,t\}$, we can rename color $\alpha$ to $t+1$ so that $(u_1,u_{t+1})$ is type $\mathbf{C_1}\mathbf{C_{t+1}}$, a contradiction.
Hence $t = s$, implying that by possibly renaming the colors, we have for every $2\leq j\leq s$, $(u_1,u_j)$ must be type $\mathbf{C_1}\mathbf{C_j}$.
This implies that for every $1\leq i < j \leq s$, $(u_i,u_j)$ is type $\mathbf{C_i}\mathbf{C_j}$.
In particular, this implies that $s\leq r$.
Moreover, we can utilize Lemma~\ref{lem:3vtx} (B) to derive that
\begin{equation}\label{equ:s-parts}
\mbox{every pair $(v_i,v_j)\in V_i\times V_j$ is type $\bfi\bfj$ for all $1\leq i\neq j\leq s$.}
\end{equation}
If item $(2)$ happens, we can derive the following analogously:
\begin{equation}\label{equ:s-parts2}
\mbox{every pair $(v_i,v_j)\in V_i\times V_j$ is type $\bfj\bfi$ for all $1\leq i\neq j\leq s$.}
\end{equation}
If $s=2$, it is no difference between items \eqref{equ:s-parts} and \eqref{equ:s-parts2} and we can utilize Lemma~\ref{lem:3vtx} (B) to prove the same statement.

To complete the proof, it remains to show $s=r$ and the item (C).
For an edge $e\in E(H)$, let $\vec{\#}(e)=(t_1,\ldots, t_s,0,\ldots,0)$ be the $r$-tuple, where each $t_j=|V(e)\cap V_j|$ denotes the number of vertices of $V(e)$ contained in the part $V_j$ for $j\in [s]$.
Note that $\sum_{j=1}^s t_j=k$.
If $e$ is a $C_i$-edge for some $i\in [r]$, we define $\vec{1}(e)$ as the $r$-tuple with its $i^{th}$ entry being $1$ and all other entries being $0$.

Suppose that \eqref{equ:s-parts} happens, we claim that $\vec{\#}(e)-\vec{1}(e)$ is an invariant, denoted as $\vec{a}=(a_1,\ldots,a_r)$, for all edges $e$ in $H$.
It suffices to show that for any two edges $e,f$ in $H$,
it holds that $\vec{\#}(e)+\vec{1}(f)=\vec{\#}(f)+\vec{1}(e)$.
For any given $e,f\in E(H)$, there exists $\ell\in [k]$ such that
$V(e)\setminus V(f)=\{u_1,\ldots,u_\ell\}$ and $V(f)\setminus V(e)=\{v_1,\ldots,v_\ell\}$.
Using similar arguments as before (i.e., Lemma~\ref{lem:2vtx} (A)),
there exist $T_i\in N_H(u_i)\cap N_H(v_i)$ for $1\leq i\leq \ell$ which are pairwise disjoint.
Then $M_1=\{u_1T_1,\ldots,u_\ell T_\ell,f\}$ and $M_2=\{v_1T_1,\ldots,v_\ell T_\ell,e\}$ are two perfect matchings of $H_0=H[V(e)\cup V(f)\cup (\bigcup_{i=1}^\ell T_i)]$.
Let $\vec{c}_i$ be the color profile of the matching $M_i$ for $i=1,2.$
We may assume that $\vec{c}_1=\vec{c}_2$ (as otherwise $H_0$ is a good gadget of the third kind).
It also follows that $\vec{c}_1-\vec{1}(f)$ denotes the color profile of $\{u_1T_1,\ldots,u_\ell T_\ell\}$,
while $\vec{c}_2-\vec{1}(e)$ denotes the color profile of $\{v_1T_1,\ldots,v_\ell T_\ell\}$.
Note that by \eqref{equ:s-parts}, the type $u_iT_iv_i$ only depends on the parts that contain $u_i$ and $v_i$;
moreover, the common vertices in $V(e)\cap V(f)$ contribute equally to both $\vec{\#}(e)$ and $\vec{\#}(f)$.
So it can be deduced that
the difference between the color profiles of $\{u_1T_1,\ldots,u_\ell T_\ell\}$ and $\{v_1T_1,\ldots,v_\ell T_\ell\}$ equals $\vec{\#}(e)-\vec{\#}(f)$.
This says that
$$\big(\vec{c}_1-\vec{1}(f)\big)-\big(\vec{c}_2-\vec{1}(e)\big)=\vec{\#}(e)-\vec{\#}(f),$$
or equivalently, $$\vec{0}=\vec{c}_1-\vec{c}_2=\big(\vec{\#}(e)+\vec{1}(f)\big)-\big(\vec{\#}(f)+\vec{1}(e)\big).$$
This completes the proof of this claim.

Now we take two vertices $v_1\in V_1$ and $v_2\in V_2$.
Then $(v_1,v_2)$ is type $\mathbf{C_1}\mathbf{C_2}$.
So there exists some $T\in N_H(v_1)\cap N_H(v_2)$ such that $v_1Tv_2$ is $\mathbf{C_1}\mathbf{C_2}$.
That says, $v_1T$ is a $C_1$-edge and thus $\vec{1}(v_1T)=(1,0,\ldots,0)$.
Applying the previous claim on the edge $v_1T$,
we see that $$(a_1,\ldots,a_r)=\vec{a}=\vec{\#}(v_1T)-\vec{1}(v_1T),$$
where the first entry of $\vec{\#}(v_1T)$ is at least one as $v_1\in V_1$.
Therefore, all entries $a_i$ for $i\in [r]$ in $\vec{a}$ are non-negative and satisfy that $\sum_{i=1}^r a_i=k-1$.

We also can conclude that for all edges $e\in E(H)$, $\vec{\#}(e)$ only depends on the color of $e$.
Since there are exactly $r$ colors appearing, we will have $r$ distinct forms for $\vec{\#}(e)$, each differing at one entry.
This shows that $s=r$.
Finally, each $C_i$-edge $e$ in $H$ satisfies $\vec{\#}(e)=\vec{a}+\vec{1}(e)$, implying that
\[\mbox{$|V(e)\cap V_i|=a_i+1$ and $|V(e)\cap V_j|=a_j$ for every $j\in [r]\backslash \{i\}$.}\]

Analogously, if \eqref{equ:s-parts2} happens, then $s=r$, and there exists integers $b_1, \ldots, b_r\geq 1$ such that $\sum_{i=1}^r b_i = k + 1$ and
\[\mbox{$|V(e)\cap V_i|=b_i-1$ and $|V(e)\cap V_j|=b_j$ for every $j\in [r]\backslash \{i\}$.}\]
This completes the proof of Lemma~\ref{lem:3vtx}.
\end{proof}

Now using the item (C) of Lemma~\ref{lem:key}, we can rapidly derive the validity of Lemma~\ref{lem:key1}.

\subsection{Minimum degree given by extremal structures}
In this subsection, we provide precise estimations for the minimum vertex degrees of the $k$-graphs $H(n,\vec{\mathbf{a}})$, $H(\vec{V},\vec{\mathbf{a}})$, $\tilde{H}(n,\vec{\mathbf{b}})$ and $\tilde{H}^*(\vec{V},\vec{\mathbf{b}})$.
For integers $x_i\geq 0$ and $m=\sum_{i=1}^r x_i$, we define $\binom{m}{x_1,\ldots,x_r}:=\frac{m!}{x_1!\ldots x_r!}$.

\begin{lem}\label{lem:smallest_deg}
Let $k\geq 3$ and $r\geq 2$ be integers. For any $\vec{\mathbf{a}}=(a_1,\ldots,a_r)\in \mathbb{N}^r_{k-1}$\footnote{Recall that in this definition, we assume that $a_1 + a_2+ \ldots + a_r = k-1$ and $0\leq a_1\leq a_2\leq \ldots \leq a_r$.},
the following hold.
\begin{itemize}
\item [(A).]
We have $\delta\big(H(n,\vec{\mathbf{a}})\big)=(1+o(1))\cdot \binom{n-1}{k-1}\cdot g_r(\vec{\mathbf{a}})$,
where $o(1)\to 0$ as $n\to \infty$,
\[
g_r(\vec{\mathbf{a}}):=\binom{k-1}{a_1,\ldots,a_r}\cdot \prod_{i=1}^r\left(\frac{ra_i+1}{rk}\right)^{a_i}\cdot \left(1+\frac{a_1}{ra_1+1}\sum_{i=2}^r\frac{ra_i+1}{a_i+1}\right).
\]
In particular, this implies that
\[
g_r(k)= \max_{
\vec{\mathbf{a}}\in \mathbb{N}^r_{k-1}} g_r(\vec{\mathbf{a}}).
\]

\item [(B).] Let $\vec{V}=(V_1,V_2,\ldots, V_r)$ be a partition of $n$ vertices such that $\frac{|V_i|}{n}=(1+\epsilon_i)\frac{ra_i+1}{rk}$. Then
\[
\mbox{by setting } \epsilon = \max_{1\leq i\leq r} |\epsilon_i|\leq \frac{1}{2k}, \mbox{ we have }
\delta\big(H(\vec{V},\vec{\mathbf{a}})\big)\leq (1+4k\epsilon+o(1))\cdot \binom{n-1}{k-1}\cdot g_r(\vec{\mathbf{a}}).
\]
\end{itemize}
\end{lem}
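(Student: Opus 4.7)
The plan is to reduce Part (A) to an explicit asymptotic count followed by a one-variable optimization over the vertex-orbit index $s\in[r]$, and then to deduce Part (B) by a routine perturbation argument.

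For Part (A), I would fix $s\in[r]$ and count the edges of $H(n,\vec{\mathbf{a}})$ through a vertex $v\in V_s$ by splitting according to the type $i\in[r]$ of the edge, i.e., which part $V_i$ receives $a_i+1$ rather than $a_i$ vertices. If $i=s$ then $v$ is one of the $a_s+1$ vertices in $V_s$, contributing $\binom{|V_s|-1}{a_s}\prod_{j\neq s}\binom{|V_j|}{a_j}$; if $i\neq s$ then $v$ is one of $a_s$ vertices in $V_s$, contributing $\binom{|V_s|-1}{a_s-1}\binom{|V_i|}{a_i+1}\prod_{j\neq s,i}\binom{|V_j|}{a_j}$ (which vanishes automatically when $a_s=0$). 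Using $\binom{m}{a}=(1+o(1))m^a/a!$ and substituting $|V_j|=\frac{ra_j+1}{rk}n$, each $i\neq s$ contribution equals the $i=s$ term multiplied by $\frac{a_s(ra_i+1)}{(a_i+1)(ra_s+1)}$, so after summing and factoring I obtain
\[
d_H(v)=(1+o(1))\binom{n-1}{k-1}\binom{k-1}{a_1,\ldots,a_r}\prod_{j=1}^r\left(\frac{ra_j+1}{rk}\right)^{a_j}\cdot F(s),
\]
where $F(s):=1+\frac{a_s}{ra_s+1}\sum_{i\neq s}\frac{ra_i+1}{a_i+1}$. Hence the asymptotic minimum degree is the above common factor multiplied by $\min_s F(s)$.

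The main obstacle is to verify that $\min_s F(s)=F(1)$ under the ordering $a_1\leq a_2\leq\cdots\leq a_r$. Setting $T:=\sum_{i=1}^r\frac{ra_i+1}{a_i+1}$, I can rewrite $F(s)=1+\frac{a_sT}{ra_s+1}-\frac{a_s}{a_s+1}$, and using the telescoping identity $\frac{a_s}{ra_s+1}-\frac{a_1}{ra_1+1}=\frac{a_s-a_1}{(ra_s+1)(ra_1+1)}$ together with its analogue at $r=1$, I obtain
\[
F(s)-F(1)=(a_s-a_1)\left[\frac{T}{(ra_s+1)(ra_1+1)}-\frac{1}{(a_s+1)(a_1+1)}\right].
\]
Since $a_s\geq a_1$, it suffices to check the bracket is non-negative, i.e., $T(a_s+1)(a_1+1)\geq (ra_s+1)(ra_1+1)$. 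The key observation is that $\frac{rx+1}{x+1}=r-\frac{r-1}{x+1}$ is increasing in $x$, so each of the $r$ terms of $T$ is at least $\frac{ra_1+1}{a_1+1}$, giving $T\geq r\cdot\frac{ra_1+1}{a_1+1}$; combined with the trivial $\frac{ra_s+1}{a_s+1}\leq r$, this yields $T(a_s+1)(a_1+1)\geq r(ra_1+1)(a_s+1)\geq (ra_1+1)(ra_s+1)$, as desired. This completes Part~(A); the identity $g_r(k)=\max_{\vec{\mathbf{a}}} g_r(\vec{\mathbf{a}})$ is then immediate from the definitions.

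For Part (B), I reuse the same asymptotic count for a vertex $v\in V_1$ in $H(\vec V,\vec{\mathbf{a}})$, but now $|V_j|=(1+\epsilon_j)\frac{ra_j+1}{rk}n$. Each monomial in the asymptotic formula acquires a factor $\prod_j(1+\epsilon_j)^{b_j}$ with $\sum_j b_j=k-1$, which is bounded above by $(1+\epsilon)^{k-1}$. A standard geometric-series estimate (e.g., using $\binom{k-1}{j}\leq k^j$) yields $(1+\epsilon)^{k-1}\leq 1+\frac{k\epsilon}{1-k\epsilon}\leq 1+2k\epsilon$ whenever $\epsilon\leq 1/(2k)$. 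Since the minimum degree is bounded above by $d_H(v)$ for any particular $v$, I conclude
\[
\delta(H(\vec V,\vec{\mathbf{a}}))\leq d_H(v)\leq (1+\epsilon)^{k-1}(1+o(1))\binom{n-1}{k-1}g_r(\vec{\mathbf{a}})\leq (1+4k\epsilon+o(1))\binom{n-1}{k-1}g_r(\vec{\mathbf{a}}),
\]
with the constant $4k$ in the statement providing comfortable slack.
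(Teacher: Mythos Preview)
Your argument is correct and follows the same overall route as the paper: exact degree count by edge type, reduction to $\min_s F(s)$ with $F(s)=1+\frac{a_s}{ra_s+1}\sum_{i\neq s}\frac{ra_i+1}{a_i+1}$, and a perturbation estimate for Part~(B). Your verification of $F(s)\geq F(1)$ is, however, cleaner than the paper's: by introducing $T=\sum_i\frac{ra_i+1}{a_i+1}$ you reduce to the one-line chain $T(a_s{+}1)(a_1{+}1)\geq r(ra_1{+}1)(a_s{+}1)\geq(ra_1{+}1)(ra_s{+}1)$, whereas the paper subtracts a common partial sum from both sides and then checks the explicit identity
\[
\frac{a_1}{ra_1+1}\cdot\frac{ra_j+1}{a_j+1}+(r-2)\frac{a_1}{a_1+1}-(r-1)\frac{a_j}{ra_j+1}\cdot\frac{ra_1+1}{a_1+1}=-\frac{(r-1)(a_j-a_1)(1+(r-1)a_1+a_j)}{(a_1+1)(ra_1+1)(a_j+1)(ra_j+1)}\leq 0.
\]
Likewise, in Part~(B) your monomial-wise bound---each term picks up a factor $\prod_j(1+\epsilon_j)^{b_j}\leq(1+\epsilon)^{k-1}$ with all $b_j\geq 0$ (the case $a_1=0$ kills the problematic terms)---yields $(1+\epsilon)^{k-1}\leq 1+2k\epsilon$, slightly sharper than the paper's $(1+\epsilon)^{k+2}\leq 1+4k\epsilon$, which arises because the paper bounds the ratio $\frac{1+\epsilon_i}{1+\epsilon_1}\leq\frac{1+\epsilon}{1-\epsilon}$ separately rather than distributing over monomials.
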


\begin{proof}
In this proof, we denote $H$ as $H(n,\vec{\mathbf{a}})$, $H(\vec{V},\vec{\mathbf{a}})$, and we define $V_1 \cup V_2 \cup \ldots \cup V_r$ as the default $r$-partition of $H$.
By definition, for any $e\in E(H)$, there exists some $i\in [r]$ such that $|e\cap V_i|=a_i+1$ and $|e\cap V_j|=a_j$ for all $j\in [r]\backslash \{i\}$.
Let $E_i(H)$ be the set of all edges $e$ in $H$ with $|e\cap V_i|=a_i+1$.
Let $n_i=|V_i|$ for $i\in [r]$.
Suppose that each $n_i\to \infty$ as $n\to \infty$.
Then in both cases, we have
\begin{align*}
\delta(H)&=\min_{v\in V(H)}\sum_{\ell=1}^r\sum_{e\in E_\ell(H)}1_{\{v\in e\}}\\
&=\min_{j\in[r]}\sum_{\ell=1;\ell\neq j}^r\left(\prod_{i=1;i\neq j,\ell}^r\binom{n_i}{a_i}\right)\binom{n_\ell}{a_\ell+1}\binom{n_j-1}{a_j-1}
+\left(\prod_{i=1;i\neq j}^r\binom{n_i}{a_i}\right)\binom{n_j-1}{a_j}\\
&=\min_{j\in[r]}\left(\prod_{i=1;i\neq j}^r\binom{n_i}{a_i}\right)\cdot \binom{n_j-1}{a_j}
\left(1+\frac{a_j}{n_j-a_j}{\sum_{\ell=1;\ell\neq j}^r\frac{n_\ell-a_\ell}{a_\ell+1}}\right),
\end{align*}
thus implying that (here $o(1)$ denotes that $o(1) \to 0$ as $n \to \infty$)
\begin{equation}\label{equ:delta-H}
\frac{\delta(H)}{\binom{n-1}{k-1}}=(1+o(1))\cdot\min_{j\in [r]}\binom{k-1}{a_1,a_2,\ldots,a_r}\prod_{i=1}^r\left(\frac{n_i}{n}\right)^{a_i}\cdot \left(1+\sum_{i=1;i\neq j}^r\frac{n_ia_j}{n_j(a_i+1)}\right).
\end{equation}

We first prove (A). In this case, $n$ is divisible by $rk$ and $n_i=\frac{ra_i+1}{rk}n$. Hence, we derive from \eqref{equ:delta-H}
\begin{align*}
\frac{\delta(H)}{\binom{n-1}{k-1}}
=(1+o(1))\cdot\min_{j\in [r]} \binom{k-1}{a_1,a_2,\ldots,a_r}\prod_{i=1}^r\left(\frac{ra_i+1}{rk}\right)^{a_i}\cdot \left(1+\frac{a_j}{ra_j+1}\sum_{i=1;i\neq j}^r\frac{ra_i+1}{a_i+1}\right).
\end{align*}
Recall that $0\leq a_1\leq a_2\leq \ldots \leq a_r$. To prove (A), it remains to show that for all $2\leq j\leq r$,
\begin{equation}\label{ineq_of_deg}
\frac{a_1}{ra_1+1}\sum_{i=2}^r\frac{ra_i+1}{a_i+1}
\leq \frac{a_j}{ra_j+1}\sum_{i=1;i\neq j}^r\frac{ra_i+1}{a_i+1}
\end{equation}
Observing that for $0\leq a\leq b$, it holds that $\frac{a}{ra+1}\leq \frac{b}{rb+1}$ and $\frac{ra+1}{a+1}\leq \frac{rb+1}{b+1}$,
we can deduce that
\[
\frac{a_1}{ra_1+1}\sum_{i=2;i\neq j}^r\left(\frac{ra_i+1}{a_i+1}-\frac{ra_1+1}{a_1+1}\right)\leq
\frac{a_j}{ra_j+1}\sum_{i=2;i\neq j}^r\left(\frac{ra_i+1}{a_i+1}-\frac{ra_1+1}{a_1+1}\right).
\]
Therefore, to show the validity of \eqref{ineq_of_deg}, it suffices to show the following stronger inequality:
\[
\frac{a_1}{ra_1+1}\left (\sum_{i=2}^r\frac{ra_i+1}{a_i+1}-\sum_{i=2;i\neq j}^r\big(\frac{ra_i+1}{a_i+1}-\frac{ra_1+1}{a_1+1}\big)\right)
\leq
\frac{a_j}{ra_j+1}\left (\sum_{i=1;i\neq j}^r\frac{ra_i+1}{a_i+1}-\sum_{i=2;i\neq j}^r\big(\frac{ra_i+1}{a_i+1}-\frac{ra_1+1}{a_1+1}\big)\right)
\]
which is equivalent to $\frac{a_1}{ra_1+1}\frac{ra_j+1}{a_j+1}+(r-2)\frac{a_1}{a_1+1}
\leq (r-1)\frac{a_j}{ra_j+1}\frac{ra_1+1}{a_1+1}.$
This is true because
\begin{align*}
\frac{a_1}{ra_1+1}\frac{ra_j+1}{a_j+1}+(r-2)\frac{a_1}{a_1+1}-(r-1)\frac{a_j}{ra_j+1}\frac{ra_1+1}{a_1+1}=-\frac{(r-1)(a_j-a_1)(1+(r-1)a_1+a_j)}{(a_1+1)(ra_1+1)(a_j+1)(ra_j+1)}
\leq 0.
\end{align*}

Now we prove (B). In this case, we have $n_i=(1+\epsilon_i)\frac{ra_i+1}{rk}n$ and $\epsilon = \max_{1\leq i\leq r}|\epsilon_i|$.
Note that $0 \leq \epsilon \leq \frac{1}{2k}\leq \frac{1}{2}$.
So we have
$\frac{1}{1-\epsilon}\leq 1+2\epsilon \leq (1+\epsilon)^2$ and $(1+\epsilon)^{k+2} \leq \exp(\epsilon(k+2)) \leq 1+(e-1)(k+2)\epsilon$,
where we make use of the following facts that $0\leq \epsilon(k+2)\leq \frac{k+2}{2k}\leq 1$ and $e^x\leq 1+(e-1)x$ for $0\leq x\leq 1$.
Using \eqref{equ:delta-H} again, we can derive that
\begin{align*}
\frac{\delta(H)}{(1+o(1))\binom{n-1}{k-1}}
&\leq \min_{j\in [r]}\binom{k-1}{a_1,a_2,\ldots,a_r}
\prod_{i=1}^r\left(\frac{ra_i+1}{rk}(1+\epsilon)\right)^{a_i}
\cdot \left(1+\sum_{i=1;i\neq j}^r\frac{(1+\epsilon)a_j(ra_i+1)}{(1-\epsilon)(ra_j+1)(a_i+1)}\right)\\
&\leq \min_{j\in [r]} \binom{k-1}{a_1,a_2,\ldots,a_r}(1+\epsilon)^{k-1}\prod_{i=1}^r\left(\frac{ra_i+1}{rk}\right)^{a_i}\cdot (1+\epsilon)^3\left(1+\sum_{i=1;i\neq j}^r\frac{a_j(ra_i+1)}{(ra_j+1)(a_i+1)}\right)\\
&=(1+\epsilon)^{k+2}\cdot g_r(\vec{\mathbf{a}})\leq (1+(e-1)(k+2)\epsilon)\cdot g_r(\vec{\mathbf{a}})\leq (1+4k\epsilon)\cdot g_r(\vec{\mathbf{a}}),
\end{align*}
where the equality holds because of \eqref{ineq_of_deg} and the definition of $g_r(\vec{\mathbf{a}})$.
This finishes the proof.
\end{proof}

\begin{lem}\label{lem:smallest_deg2}
Let $k\geq 3$ and $r\geq 2$ be integers. For any $\vec{\mathbf{b}} = (b_1,\ldots, b_r)\in \mathbb{N}^r_{k+1}$ such that $b_1\geq 1$, the following hold.
\begin{itemize}
\item [(A)]
we have
$\delta\big(H(n,\vec{\mathbf{b}})\big)=(1+o(1))\cdot \binom{n-1}{k-1}\cdot \tilde{g}_r(\vec{\mathbf{b}})$, where $o(1)\to 0$ when $n\to \infty$ and
\[
\tilde{g}_r(\vec{\mathbf{b}}):=\binom{k}{b_1-1,b_2,\ldots,b_r}\cdot \prod_{i=2}^r\left(\frac{rb_i-1}{rk}\right)^{b_i}\cdot
\left(\frac{rb_1-1}{rk}\right)^{b_1-1}\cdot \left(\frac{r(b_1-1)}{rb_1-1}+\sum_{i=2}^r\frac{rb_i}{rb_i-1}\right).
\]

\item [(B)]
Let $\vec{V}=(V_1,V_2,\ldots, V_r)$ be a partition of $n$ vertices such that $\frac{|V_i|}{n}=(1+\epsilon_i)\frac{rb_i-1}{rk}$. Then
\[
\mbox{by setting } \epsilon = \max_{1\leq i\leq r} |\epsilon_i|\leq \frac{1}{2k}, \mbox{ we have }
\delta\big(\tilde{H}(\vec{V},\vec{\mathbf{b}})\big)\leq (1+4k\epsilon+o(1))\cdot \binom{n-1}{k-1}\cdot \tilde{g}_r(\vec{\mathbf{b}}).
\]
\end{itemize}

\end{lem}
\begin{proof}
In this proof, we denote $H$ as $\tilde{H}(n,\vec{\mathbf{b}})$, $\tilde{H}(\vec{V},\vec{\mathbf{b}})$, and we define $V_1 \cup V_2 \cup \ldots \cup V_r$ as the default $r$-partition of $H$.
By definition, for any $e\in E(H)$, there exists some $i\in [r]$ such that $|e\cap V_i|=b_i-1$ and $|e\cap V_j|=b_j$ for all $j\in [r]\backslash \{i\}$.
Let $E_i(H)$ be the set of all edges $e$ in $H$ with $|e\cap V_i|=b_i-1$.
Let $n_i=|V_i|$ for $i\in [r]$.
Suppose that each $n_i\to \infty$ as $n\to \infty$.
Then in both cases, we have\footnote{We admit that $\binom{n}{m}=0$ for $m<0$ or $m > n$.}.
\begin{align*}
\delta(H) &= \min_{v\in V(H)}\sum_{\ell=1}^r \sum_{e\in E_\ell(H)}1_{v\in e}\\
&= \min_{j\in [r]} \sum_{\ell=1; \ell\neq j}^r \left(\prod_{i=1;i\neq j,\ell}^r \binom{n_i}{b_i}\right)\binom{n_\ell}{b_\ell -1}\binom{n_j-1}{b_j-1} + \left(\prod_{i=1;i\neq j}^r \binom{n_i}{b_i}\right)\binom{n_j-1}{b_j-2}\\
&=\left(\prod_{i=1;i\neq j}^r \binom{n_i}{b_i}\right)\binom{n_j-1}{b_j}\left(\frac{b_j(b_j-1)}{(n_j-b_j+1)(n_j-b_j)}+\frac{b_j}{n_j-b_j}\sum_{\ell=1;\ell\neq j}^r\frac{b_\ell}{n_\ell-b_\ell + 1}\right),
\end{align*}
thus implying that (here $o(1)$ denotes that $o(1)\to 0$ as $n\to \infty$)
\begin{align}\label{equ:delta-tildeH}
&\frac{\delta(H)}{(1+o(1))\binom{n-1}{k-1}}\\
=~&\frac{1}{k(k+1)}\cdot\min_{j\in[r]}
\binom{k+1}{b_1,b_2,\ldots, b_r}\left(\prod_{i=1}^r\left(\frac{n_i}{n}\right)^{b_i}\right)\left(b_j(b_j-1)\cdot\left(\frac{n}{n_j}\right)^2
+ \sum_{\ell=1,\ell\neq j}^rb_jb_\ell\cdot \left(\frac{n^2}{n_jn_\ell}\right)\right).
\end{align}

We first prove (A). In this case, $n$ is divisible by $rk$ and $n_i=\frac{rb_i-1}{rk}n$. Hence, we derive from \eqref{equ:delta-tildeH}
\begin{align*}
\frac{\delta(H)}{(1+o(1))\binom{n-1}{k-1}}
=&\frac{1}{k(k+1)}\cdot\min_{j\in [r]} \binom{k+1}{b_1,b_2,\ldots,b_r}\prod_{i=1}^r
\left(\frac{rb_i-1}{rk}\right)^{b_i}\\
&\cdot\left(b_j(b_j-1)\cdot\left(\frac{rk}{rb_j-1}\right)^2+\sum_{\ell=1;\ell\neq j}^rb_jb_\ell\cdot \left(\frac{rk}{rb_j-1}\right)\left(\frac{rk}{rb_\ell-1}\right)\right)\\
=&\frac{r^2k}{k+1}\cdot \min_{j\in [r]} \binom{k+1}{b_1,b_2,\ldots,b_r}\prod_{i=1}^r
\left(\frac{rb_i-1}{rk}\right)^{b_i}
\left(\frac{b_j}{rb_j-1}\left(\frac{b_j-1}{rb_j-1}+ \sum_{\ell=1;\ell\neq j}^r\frac{b_\ell}{rb_\ell-1}\right)\right).
\end{align*}
Recall that $1\leq b_1\leq b_2\leq \ldots \leq b_r$. To prove (A), it remains to show that for all $2\leq j\leq r$,
\begin{equation}\label{ineq_of_deg2}
\frac{b_1}{rb_1-1}\left(
\frac{b_1-1}{rb_1-1}+\sum_{\ell=2}^r\frac{b_\ell}{rb_\ell - 1}
\right)
\leq \frac{b_j}{rb_j-1}\left(
\frac{b_j-1}{rb_j-1}+\sum_{\ell=1;\ell\neq j}^r\frac{b_\ell}{rb_\ell - 1}
\right)
\end{equation}
Observing that for $1\leq a\leq b$, it holds that $\frac{a}{ra-1}\geq \frac{b}{rb-1}$,
we can deduce that
\begin{align*}
&\frac{b_1}{rb_1-1}\left(
\frac{b_1-1}{rb_1-1}+\sum_{\ell=2}^r\frac{b_\ell}{rb_\ell - 1}
\right)- \frac{b_j}{rb_j-1}\left(
\frac{b_j-1}{rb_j-1}+\sum_{\ell=1;\ell\neq j}^r\frac{b_\ell}{rb_\ell - 1}\right)\\
\leq~ & \frac{b_1(b_1-1)}{(rb_1-1)^2}-\frac{b_j(b_j-1)}{(rb_1-1)(rb_j-1)}+(r-2)\left(\frac{b_1}{rb_1-1}-\frac{b_j}{rb_j-1}\right)\frac{b_1}{rb_1-1}\\
=~ &\frac{(b_1-b_j)((r-1)b_1 + b_j-1)}{(rb_1-1)^2(rb_j-1)^2}\leq 0.
\end{align*}
Hence
\begin{align*}
\frac{\delta(H)}{\binom{n-1}{k-1}}
&= (1+o(1))\frac{r^2k}{k+1}\binom{k+1}{b_1,b_2,\ldots, b_r}\prod_{i=1}^r\left(\frac{rb_i-1}{rk}\right)^{b_i}\left(\frac{b_1}{rb_1-1}\left(\frac{b_1-1}{rb_1-1}+\sum_{i=2}^r\frac{b_i}{rb_i-1}\right)\right)\\
&=(1+o(1))\cdot\binom{k}{b_1-1,b_2,\ldots,b_r}\prod_{i=2}^r\left(\frac{rb_i-1}{b_i}\right)^{b_i}\cdot\left(\frac{rb_1-1}{rk}\right)^{b_1-1}\cdot\left(\frac{r(b_1-1)}{rb_1-1}+\sum_{i=2}^r\frac{rb_i}{rb_i-1}\right)
\end{align*}
finishing the proof of item (A).

Now we prove (B). In this case, we have $n_i=(1+\epsilon_i)\frac{rb_i-1}{rk}n$ and $\epsilon = \max_{1\leq i\leq r}|\epsilon_i|$.
Note that $0 \leq \epsilon \leq \frac{1}{2k}\leq \frac{1}{2}$.
So we have
$\frac{1}{1-\epsilon}\leq 1+2\epsilon \leq (1+\epsilon)^2$ and $(1+\epsilon)^{k+2} \leq \exp(\epsilon(k+2)) \leq 1+(e-1)(k+2)\epsilon$,
where we make use of the following facts that $0\leq \epsilon(k+2)\leq \frac{k+2}{2k}\leq 1$ and $e^x\leq 1+(e-1)x$ for $0\leq x\leq 1$.
Using \eqref{equ:delta-tildeH} again, let $\delta_{ij}$ be the Kronecker notation such that $\delta_{ij}=1$ if $i=j$ and $\delta_{ij}=0$ if $i\neq j$, we can derive that
\begin{align*}
&\frac{\delta(H)}{(1+o(1))\binom{n-1}{k-1}}\\
=~ & \frac{1}{k(k+1)}\cdot\min_{j\in[r]}
\binom{k+1}{b_1,b_2,\ldots, b_r}\left(\prod_{i=1}^r\left(\frac{n_i}{n}\right)^{b_i}\right)\left(b_j(b_j-1)\cdot\left(\frac{n}{n_j}\right)^2
+ \sum_{\ell=1,\ell\neq j}^rb_jb_\ell\cdot \left(\frac{n^2}{n_jn_\ell}\right)\right)\\
=~ & \frac{1}{k}\cdot\min_{j\in[r]}
\binom{k}{b_1,\ldots, b_j-1,\ldots, b_r}\left(\prod_{i=1}^r\left(\frac{n_i}{n}\right)^{b_i-\delta{ij}}\right)\left((b_j-1)\cdot\left(\frac{n}{n_j}\right)+\sum_{\ell=1;\ell\neq j}^{r}b_\ell\cdot\left(\frac{n}{n_\ell}\right)\right)\\
\leq~& \min_{j\in[r]}
\binom{k}{b_1,\ldots, b_j-1,\ldots, b_r}\prod_{i=1}^r\left(\frac{rb_i-1}{rk}(1+\epsilon)\right)^{b_i-\delta{ij}}\\
&\quad\ \cdot\left((b_j-1)\cdot\frac{1}{k}\cdot\left(\frac{rk}{(1-\epsilon)(rb_j-1)}\right)
+\sum_{\ell=1;\ell\neq j}^{r}b_\ell\cdot\frac{1}{k}\cdot\left(\frac{rk}{(1-\epsilon)(rb_\ell-1)}\right)\right)\\
=~& \min_{j\in [r]}\binom{k}{b_1,\ldots,b_j-1,\ldots,b_r}
\prod_{i=1}^r\left(\frac{rb_i-1}{rk}(1+\epsilon)\right)^{b_i-\delta_{ij}}
\cdot \frac{1}{1-\epsilon}\cdot\left(\frac{r(b_j-1)}{rb_j-1}+\sum_{\ell=1;\ell\neq j}^{r}\frac{rb_\ell}{rb_\ell-1}\right)\\
\leq~& \frac{(1+\epsilon)^k}{1-\epsilon}\cdot\tilde{g}_r(\vec{\mathbf{b}})
\leq (1+\epsilon)^{k+2}\cdot \tilde{g}_r(\vec{\mathbf{b}})
\leq (1+(e-1)(k+2)\epsilon)\cdot \tilde{g}_r(\vec{\mathbf{b}})\leq (1+4k\epsilon)\cdot 
\tilde{g}_r(\vec{\mathbf{b}}),
\end{align*}
where the equality holds because of \eqref{ineq_of_deg2} and the definition of $\tilde{g}_r(\vec{\mathbf{b}})$.
This finishes the proof.
\end{proof}

In the following lemma, we will see that for $r\geq 3$ and $k\geq 3$, $\lim_{n\to\infty} \frac{\delta(H(n,\vec{\mathbf{b}}))}{\binom{n-1}{k-1}} < \max\{f(k),g_r(k)\}$.
Furthermore, it suggests that $\tilde{H}^*(n,\vec{\mathbf{b}})$ will not be the extreme graph with low perfect matching discrepancy and high degree. The proof can be found in Appendix \ref{sec:type2}.

\begin{lem}\label{lem:type2}
For any $r\geq 3$, $k\geq 3$ and $\vec{\mathbf{b}}\in\mathbb{N}_{k+1}^r$ satisfying $b_1\geq 1$, then the following hold:
\begin{itemize}
\item [(A)] If $b_1=1$, then for every positive integer $n$ is divisible by $rk$, $\tilde{H}(n,\vec{\mathbf{b}})$ is a subgraph of\\ $H(n,(0,\ldots,0,k-1))$, implying that $\tilde{g}_r(\vec{\mathbf{b}}) \leq g_r(k)\leq \max\{f(k), g_r(k)\}$;
\item [(B)] If $b_1\geq 2$, then
\[
\tilde{g}_r(\vec{\mathbf{b}})< f(k) \leq \max\{f(k), g_r(k)\},
\]
\end{itemize}

\end{lem}

\section{Proof of Theorem \ref{main}}
For fixed integers $k\geq 3$ and $r\geq 2$, we take $\gamma=\gamma(k,r) > 0$ to be sufficiently small.\footnote{Our proof shows that $\gamma$ can be chosen as $\gamma=\Theta(1/r^2k^2)$.}
Given any $\eta>0$,\footnote{We may assume that $0<\eta<\frac12$, because otherwise the inequality regarding the minimum vertex degree of $H$ below would not hold.} let $n_0=n_0(r,k,\eta)$ be sufficiently large.
Let $H$ be a $k$-uniform hypergraph on $n\geq n_0$ vertices with an $r$-edge-coloring $c: E(H)\rightarrow\{C_1,C_2,\ldots,C_r\}$ such that $n\equiv 0\pmod k$ and \[\delta(H)>\big(\max\{f(k), g_r(k)\}+\eta\big)\cdot \binom{n-1}{k-1}.\]
Suppose for a contradiction that every perfect matching of $H$ has less than $\frac{n}{rk}(1+\gamma\cdot\eta)$ edges with the color $C_i$ for any $i\in [r]$.
For a subgraph $F$ of $H$, let $C_i(F)$ denote the number of $C_i$-edges contained in $F$.
As a consequence, we can derive that for every perfect matching $\mathcal{M}$ of $H$ and every $i\in [r]$,
\begin{equation}\label{equ:cp_i}
\frac{n}{rk}\bigg(1-(r-1)\gamma\cdot\eta\bigg)< C_i(\mathcal{M}) <\frac{n}{rk}\bigg(1+\gamma\cdot\eta\bigg).
\end{equation}

First we consider when $H$ contains $t=\eta n/\big(20\cdot rk^2(k+1)\big)$ (vertex-)disjoint good gadgets (of any kind),
denoted by $G_1,\ldots, G_t$.
For $j\in [t]$, let $\mathcal{M}_{j}$ and $\mathcal{N}_{j}$ be two perfect matchings of $G_j$ with different color profiles.
Let $H'=H-\cup_{j=1}^t V(G_j)$.
Since each $G_i$ has at most $k^2+k$ vertices,
we have
\begin{equation}\label{equ:delta(H')}
\delta(H')\geq \delta(H)-t(k^2+k)\binom{n-2}{k-2}>\big(\max\{f(k), g_r(k)\}+\eta/2\big)\cdot\binom{n-1}{k-1}.
\end{equation}
Note that $|V(H')|$ is divisible by $k$. By the definition of $f(k)$, $H'$ has a perfect matching say $\mathcal{L}$.
Since every $\mathcal{M}_{j}$ and $\mathcal{N}_{j}$ have different color profiles,
by average there exists some $i\in [r]$ (let us say $i=1$) such that
at least $t/r$ many indices $j\in [t]$ satisfy $C_1(\mathcal{M}_{j})\neq C_1(\mathcal{N}_{j})$.
Renaming if needed we may assume that for all $j\in [t]$, we have $C_1(\mathcal{M}_{j})\geq C_1(\mathcal{N}_{j})$,
and thus we obtain that
$$\sum_{j=1}^t C_1(\mathcal{M}_{j})\geq \sum_{j=1}^t C_1(\mathcal{N}_{j})+\frac{t}{r}.$$
Let $\mathcal{M}=\mathcal{L}\cup \mathcal{M}_1\cup \ldots \cup \mathcal{M}_t$ and $\mathcal{N}=\mathcal{L}\cup \mathcal{N}_1\cup \ldots \cup \mathcal{N}_t$ be two perfect matchings of $H$.
Then
$$C_1(\mathcal{M})-C_1(\mathcal{N})=\sum_{j=1}^t \Big(C_1(\mathcal{M}_{j})-C_1(\mathcal{N}_{j})\Big)\geq \frac{t}{r}=\frac{\eta n}{20\cdot r^2k^2(k+1)}.$$
On the other hand, using \eqref{equ:cp_i} we have $C_1(\mathcal{M})-C_1(\mathcal{N})\leq \frac{\gamma}{k}\eta n$,
which is a contradiction as $\gamma=\gamma(k,r)$ is chosen to be sufficiently small.
This proves the first case.

Now we may assume that $H$ contains less than $\eta n/\big(20\cdot rk^2(k+1)\big)$ disjoint good gadgets.
We can greedily choose $t$ disjoint good gadgets, denoted by $G_1,\ldots,G_t$, such that $H':= H-\cup_{i=1}^t V(G_i)$ contains no good gadgets of any kind, where $t< \eta n/\big(20\cdot rk^2(k+1)\big)$.
Let $m=|\cup_{i=1}^t V(G_i)|$. So $$m\leq t(k^2+k)<\frac{\eta n}{20\cdot rk}.$$
Also note that $|V(H')|$ is divisible by $k$ and moreover, the same inequality \eqref{equ:delta(H')} holds for $H'$.
Hence, $H'$ contains at least one perfect matching.
Let $\mathcal{L}$ be any perfect matching of $H'$.
If $\mathcal{L}$ contains at least $\frac{|V(H')|}{rk}+\frac{m}{rk}+\frac{\gamma \eta n}{rk}$ edges of the same color,
then, when combined with any perfect matching of $G_i$ for all $i\in [t]$, it forms a perfect matching of $H$ which contradicts \eqref{equ:cp_i}.
Hence we may assume that $$C_i(\mathcal{L})<\frac{|V(H')|}{rk}+\left(\frac{m}{rk}+\frac{\gamma \eta n}{rk}\right)\leq \frac{|V(H')|}{rk}+\frac{\eta \cdot |V(H')|}{15\cdot r^2k^2} \mbox{ for any } i\in [r] ,$$
where the last inequality holds because $m<\frac{\eta n}{20\cdot rk}$ and $\gamma$ is sufficiently small.
It further implies that
\begin{equation}\label{equ:Ci(L)}
\frac{|V(H')|}{rk}-\frac{(r-1)\eta \cdot |V(H')|}{15\cdot r^2k^2}<C_i(\mathcal{L})< \frac{|V(H')|}{rk}+\frac{\eta \cdot |V(H')|}{15\cdot r^2k^2} \mbox{ for each } i\in [r].
\end{equation}
In particular, it shows that $H'$ has at least one $C_i$-edge for each $i\in [r]$.
Using \eqref{equ:delta(H')}, it is also easy to derive that $\delta(H')>\frac{1}{2}\binom{n-1}{k-1}+\frac{k^2+1}{2}\binom{n-2}{k-2}$.
Now we see that $H'$ satisfies all conditions of Lemma~\ref{lem:key1}.
By Lemma~\ref{lem:key1}, we first consider that there exists an $r$-partition $\vec{V}'=(V_1',\ldots, V_r'))$ of $V(H')$ and a vector $\vec{\mathbf{a}}\in\mathbb{N}^r_{k-1}$ such that $H'$ is an $r$-edge-colored subgraph of $H^*(\vec{V}',\vec{\mathbf{a}})$.
Any perfect matching $\mathcal{L}$ of $H'$ is of course a perfect matching of $H^*(\vec{V}',\vec{\mathbf{a}})$.
Now recall the observation \eqref{equ:obs2}, which asserts that $$|V_i'|=a_i\frac{|V(H')|}{k}+C_i(\mathcal{L}) \mbox{ for each } i\in [r].$$
Let $|V_i'|/|V(H')|=(1+\epsilon_i)\frac{ra_i+1}{rk}$.
Then the above equality, along with \eqref{equ:Ci(L)}, implies that
\begin{equation}\label{equ:eps}
\epsilon:= \max_{1\leq i\leq r} |\epsilon_i|\leq \max_{1\leq i\leq r} \frac{(r-1)\eta}{15\cdot rk\cdot (ra_i+1)}\leq \frac{\eta}{15k}\leq \frac{1}{2k},
\end{equation}
where we use $\eta<\frac12$.
By item (B) of Lemma~\ref{lem:smallest_deg}, since $n$ is sufficiently large, we can derive that
\begin{align*}
\delta(H')&\leq \delta\big(H(\vec{V}',\vec{\mathbf{a}})\big)\leq (1+4k\epsilon+\frac{\eta}{15})\cdot \binom{|V(H')|-1}{k-1}\cdot g_r(\vec{\mathbf{a}})\\
&\leq \left(1+\frac{\eta}{3}\right)\cdot \binom{n-1}{k-1}\cdot g_r(k)\leq \left(g_r(k)+\frac{\eta}{3}\right)\cdot \binom{n-1}{k-1},
\end{align*}
where the second last inequality holds because of \eqref{equ:eps} and the definition of $g_r(k)$, and the last inequality follows from the fact that $g_r(k)\leq 1$.
On the other hand, by \eqref{equ:delta(H')} we have $\delta(H')\geq (g_r(k)+\frac{\eta}2)\cdot\binom{n-1}{k-1}$, which is a contradiction.

Now we consider the case of $H'$ is a $r$-edge-colored subgraph of $\tilde{H}^*(\vec{V}',\vec{\mathbf{b}})$ for some partition $\vec{V}'$ of $V'$ and $\vec{\mathbf{b}}\in\mathbb{N}_{k+1}^r$ such that $b_1\geq 1$. 
If $r=2$, then $\tilde{H}((V_1, V_2),(b_1,b_2))= H((V_1, V_2),(b_1-1,b_2-1))$.
Hence it leads to a contradiction by previous discussion.
So we may assume that $r\geq 3$.
Then by item (B) of Lemma \ref{lem:smallest_deg2} and Lemma \ref{lem:type2}, following the same discussion, we have 
\[
\delta(H')\leq \left(\tilde{g}_r(b_1,\ldots,b_r)+\frac{\eta}{3}\right) \cdot \binom{n-1}{k-1}
\leq \left(\max\{f(k), g_r(k)\}+\frac{\eta}{3}\right) \cdot \binom{n-1}{k-1},
\]
which contradicts with $\delta(H')\geq (\max\{f(k), g_r(k)\}+\frac{\eta}2)\cdot\binom{n-1}{k-1}$.
This completes the proof of Theorem \ref{main}. \qed

\section{Proof of Theorem~\ref{thm:main2}}
The goal of this section is to establish the proof of Theorem \ref{thm:main2}.
Throughout this section, we denote the function $f_0(k)=1-\left(1-\frac{1}{k}\right)^{k-1}$. Recall that we have
\begin{equation}\label{equ:f>f0}
f(k)=\limsup_{n\to\infty}\frac{m(k,n)}{\binom{n-1}{k-1}}\geq f_0(k) \mbox{ ~~ and ~~  } g_r(k)= \max_{\vec{\mathbf{a}}\in \mathbb{N}^r_{k-1}} g_r(\vec{\mathbf{a}}),
\end{equation}
where the first inequality becomes an equality when $k\in \{2,3,4,5\}$, and the second expression is given by Lemma~\ref{lem:smallest_deg}.

In the following lemma, we present some sufficient conditions under which $f(k)>g_r(k)$ holds.
The proof can be found in Appendix \ref{sec:g<k}.

\begin{lem}\label{lem:g<k}
Let $k\geq 3$ and $r\geq 2$ be integers. Then $f(k)>g_r(k)$ if one of the following holds:
\begin{itemize}
\item $r=2$ and $k\geq 20$;
\item $r\geq 3$ and $k\geq 10$; and
\item $r\geq 6$ and $k\geq 3$.
\end{itemize}
\end{lem}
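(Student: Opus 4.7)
The plan is to bound $g_r(k) = \max_{\vec{\mathbf{a}} \in \mathbb{N}^r_{k-1}} g_r(\vec{\mathbf{a}})$ from above and then compare with the explicit lower bound $f(k) \geq f_0(k) = 1 - (1-1/k)^{k-1}$ from \eqref{equ:f>f0}. The three ranges in the lemma are handled through a single framework but with different boundary checks at the end.

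A useful first observation is that the ordering constraint $a_1 \leq a_2 \leq \cdots \leq a_r$ together with $a_1 \geq 1$ forces $\sum_i a_i \geq r$, so $a_1 \geq 1$ is only admissible when $k \geq r+1$. In particular, in the regime $r \geq 6$ and $k \leq r$, every admissible $\vec{\mathbf{a}}$ must have $a_1 = 0$, which means the correction factor $1 + \frac{a_1}{ra_1+1}\sum_{i \geq 2} \frac{ra_i+1}{a_i+1}$ equals $1$. This is what makes the third range of the lemma comparatively tractable: one deals only with the pure multinomial--product expression.

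The main technical step is a \emph{concentration principle}: for all $(r,k)$ in the specified ranges, I expect $g_r(\vec{\mathbf{a}})$ to be maximized at $\vec{\mathbf{a}}^{\star} := (0, \ldots, 0, k-1)$, giving
\[
g_r(\vec{\mathbf{a}}^{\star}) = \left(\frac{r(k-1)+1}{rk}\right)^{k-1} = \left(1 - \frac{r-1}{rk}\right)^{k-1}.
\]
Within the $a_1 = 0$ case, I would set up an exchange argument: whenever $a_i \geq 1$ for some $i < r$, replacing $(a_i, a_r)$ by $(a_i - 1, a_r + 1)$ alters $g_r(\vec{\mathbf{a}})$ by an explicit ratio that, after a monotonicity check, is at most $1$ throughout the specified ranges. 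For the complementary $a_1 \geq 1$ case (which requires $k \geq r+1$, and so in particular $k \geq 7$ when $r \geq 6$), the elementary bounds $a_1/(ra_1+1) \leq 1/r$ and $(ra_i+1)/(a_i+1) \leq r$ yield $1 + C \leq r$, while imposing $a_1 \geq 1$ shrinks $\binom{k-1}{a_1, \ldots, a_r}$ by a factor at least of order $k$ compared to $\vec{\mathbf{a}}^\star$, and shrinks $\prod_i ((ra_i+1)/(rk))^{a_i}$ by a further factor. A direct comparison then shows that the gain of at most $r$ from the correction is dominated by this shrinkage in all three ranges of the lemma.

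Granted $g_r(k) \leq \left(1 - (r-1)/(rk)\right)^{k-1}$, it remains to verify the elementary inequality
\[
\left(1 - \frac{r-1}{rk}\right)^{k-1} + \left(1 - \frac{1}{k}\right)^{k-1} < 1.
\]
Asymptotically this becomes $e^{-(r-1)/r} + e^{-1} < 1$, which holds strictly for every $r \geq 2$; the worst case $r = 2$ gives $e^{-1/2} + e^{-1} \approx 0.975$. For the effective thresholds one combines monotonicity in $k$ and $r$ with a short numerical check at the boundary: $k = 20$ for $r = 2$, $k = 10$ for $r \in \{3,4,5\}$, and each $k \in \{3,\ldots,9\}$ for $r \geq 6$ (where $(1 - (r-1)/(rk))^{k-1}$ decays rapidly as $r$ grows, while $f_0(k) \geq 5/9$ provides a uniform lower bound).

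The main obstacle will be making the concentration principle rigorous. The exchange ratio produced by swapping weight from $a_i$ to $a_r$ is not uniformly monotone, and the boundary situations where $a_i = 1$ for several $i$ have to be handled separately, since both $\binom{k-1}{\cdot}$ and the product of $p_i^{a_i}$ change in non-trivial ways. Moreover, the six exceptional extremal vectors listed in Theorem~\ref{thm:main2} (such as $(1,1)$, $(1,2)$, $(0,0,2)$, etc.) show that the concentration principle fails \emph{exactly} at the border of the hypotheses of Lemma~\ref{lem:g<k}, so the argument must be sharp enough at the specified thresholds without over-reaching into those exceptional regimes.
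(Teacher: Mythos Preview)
Your approach is conceptually cleaner than the paper's but rests entirely on the concentration principle $g_r(k)=g_r(0,\ldots,0,k-1)$, which you do not establish. The paper never tries to identify the maximizer: instead it bounds every $g_r(\vec{\mathbf{a}})$ directly via Stirling's approximation of the multinomial, splitting on how many coordinates $a_i$ vanish. The $\sqrt{2\pi}$ factors from Stirling accumulate to give bounds of the shape $C\cdot r^{3/2}(2\pi)^{-(r-1)/2}$ when $a_1\geq 1$, and $C\cdot e^{-(r-1)/r}$ when some $a_i=0$, both of which are then compared numerically with $f_0(k)$. This is cruder but robust, and it avoids the delicate exchange argument entirely.

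There is also a concrete error in your $a_1\geq 1$ paragraph. You write that ``imposing $a_1\geq 1$ shrinks $\binom{k-1}{a_1,\ldots,a_r}$ by a factor at least of order $k$ compared to $\vec{\mathbf{a}}^\star$'', but this is backwards: $\binom{k-1}{0,\ldots,0,k-1}=1$, whereas any $\vec{\mathbf{a}}$ with $a_1\geq 1$ has multinomial coefficient at least $k-1$, so the multinomial \emph{grows}. What shrinks is the product $\prod_i\bigl((ra_i+1)/(rk)\bigr)^{a_i}$, and the entire question is whether that shrinkage beats both the multinomial growth and the correction-factor gain of up to $r$. That three-way balance is exactly what the paper's Stirling estimate controls in one stroke; your exchange heuristic, as stated, does not give it. If you pursue the concentration route you will need a sharp quantitative grip on the ratio $g_r(\vec{\mathbf{a}})/g_r(\vec{\mathbf{a}}^\star)$ --- and at that point you are effectively redoing the Stirling computation. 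The final inequality $(1-(r-1)/(rk))^{k-1}+(1-1/k)^{k-1}<1$ that you would need downstream is correct and does hold in exactly the ranges of the lemma, so the endgame of your plan is sound; the gap is entirely in the concentration step.
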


Using this lemma, we are ready to present the proof of  Theorem~\ref{thm:main2}.

\begin{proof}[\bf Proof of Theorem~\ref{thm:main2}.]
By Proposition~\ref{prop:main} and Theorem~\ref{main}, we can derive that $h_r(k)= \max\{f(k), g_r(k)\}$ for all $k\geq 3$  and $r\geq 2$.
By Lemma~\ref{lem:g<k}, we see that $f(k)>g_r(k)$ holds whenever one of the following holds: (I) $r=2$ and $k\geq 20$; (II) $r\geq 3$ and $k\geq 10$, and (III) $r\geq 6$ and $k\geq 3$.

To complete the proof, it suffices to examine the following uncovered cases:
(A) $r=2$ and $3\leq k\leq 19$ and (B) $3\leq r\leq 5$ and $3\leq k\leq 9$.
These cases are finite in number, allowing us to determine the precise values of the corresponding constants.
Let $f_0(k)=1-(1-\frac{1}{k})^{k-1}$.
Recall that $f(k)\geq f_0(k)$, with equality if $k\in \{3,4,5\}$.
In the following two tables, we compare with the approximate values of $g_r(k)$ and $f_0(k)$ for the cases (A) and (B), respectively.
Numerical details for Table~\ref{tab:case4} and Table~\ref{tab:case5} can be found in Appendix~\ref{table}.

\begin{table}[ht]
    \centering
    \begin{tabular}{c|ccccccccc}
    \toprule
     $k$  & 3 &4 &5 & 6& 7& 8& 9&10&11 \\
    \midrule
    $g_2(k)$  & \textcolor{green}{0.75}\tablefootnote{The value of $g_2(3)$ is maximized by the vector $\vec{a}=(1,1)$.} &\textcolor{green}{0.6836}\tablefootnote{The value of $g_2(4)$ is maximized by the vector $\vec{a}=(1,2)$.} &\textcolor{green}{0.6561}\tablefootnote{The value of $g_2(5)$ is maximized by the vector $\vec{a}=(0,4)$.} & 0.6472& 0.6410& 0.6365& 0.6330&0.6302&0.6280 \\
    $f_0(k)$  & 0.5556 &0.5781 &0.5904 & 0.5981& 0.6034& 0.6073& 0.6103&0.6126&0.6145 \\
    \midrule[0.8pt]
    $k$  & 12 &13 &14 & 15&16& 17& 18&19& \\
    \midrule
    $g_2(k)$  & 0.6262 &0.6246 &0.6233 & 0.6221&0.6211& \textcolor{blue}{0.6202}& \textcolor{blue}{0.6195}&\textcolor{blue}{0.6188}& \\
    $f_0(k)$  & 0.6160 &0.6173 &0.6184 & 0.6194&0.6202& 0.6209& 0.6216&0.6221& \\
    \bottomrule
    \end{tabular}
    \caption{The values of $g_2(k)$ and $f_0(k)$ for $3\leq k\leq 19$}
    \label{tab:case4}
\end{table}

\begin{table}[ht]
    \centering
    \begin{tabular}{c|ccccccc}
    \toprule
       $k$  & 3 &4 &5 & 6& 7& 8& 9 \\
         \midrule
    $g_3(k)$   & \textcolor{red}{0.6049}\tablefootnote{The value of $g_3(3)$ is maximized by the vector $\vec{a}=(0,0,2)$.} &\textcolor{red}{0.5787}\tablefootnote{The value of $g_3(4)$ is maximized by the vector $\vec{a}=(0,0,3)$.} & 0.5642& 0.5549&0.5485 & 0.5439& 0.5403 \\
    $g_4(k)$     & \textcolor{red}{0.5625}\tablefootnote{The value of $g_4(3)$ is maximized by the vector $\vec{a}=(0,0,0,2)$.} & 0.5363& 0.5220&0.5129 &0.5066 & 0.5020&0.4985  \\
    $g_5(k)$     & 0.5378 &0.512 &0.4979 &0.4889 &0.4828 &0.4783 & 0.4749 \\
    $f_0(k)$  &0.5556  & 0.5781 &0.5904 & 0.5981& 0.6034& 0.6073& 0.6103  \\
    \bottomrule
    \end{tabular}
    \caption{The values of $g_r(k)$ and $f_0(k)$ for $3\leq r\leq 5$ and $3\leq k\leq 9$}
    \label{tab:case5}
\end{table}

In Table \ref{tab:case4}, the values of $g_2(k)$ for $3 \leq k \leq 5$ (highlighted in green) indicate that $g_2(k) > f_0(k) = f(k)$,
the values of $g_2(k)$ for $17 \leq k \leq 19$ (highlighted in blue) indicate that $f(k)\geq f_0(k) > g_2(k)$,
and the other values present situations where a comparison between $g_2(k)$ and $f(k)$ is infeasible.
In Table \ref{tab:case5}, the values of $g_r(k)$ for $(r,k)\in \{(3,3),(3,4),(4,3)\}$ (highlighted in red) indicate that $g_r(k) > f_0(k) = f(k)$,
while all other values of $g_r(k)$ demonstrate that $f(k)\geq f_0(k)>g_r(k)$.
In particular, each footnote indicates the unique maximum vector $\vec{a}$ for the corresponding $g_r(k)= \max_{\vec{\mathbf{a}}\in \mathbb{N}^r_{k-1}} g_r(\vec{\mathbf{a}})$.

Taken together with all the information above, this leads to the assertions in Theorem~\ref{thm:main2} for $f(k) > g_r(k)$ and $g_r(k) > f(k)$, respectively.
\end{proof}

\section{Concluding remarks}
In this paper, we establish the minimum vertex degree threshold of perfect matchings with high $r$-color discrepancy in $k$-uniform hypergraphs, by showing that $h_r(k)= \max\{f(k), g_r(k)\}$ for all $k\geq 3$ and $r\geq 2$.
Our proofs have the potential to unveil further insights, and we would like to offer some additional remarks on this matter.

\begin{itemize}
\item It should be noted that if the matching existence conjecture (i.e., \eqref{equ:f(k)_ell} is conjectured to be an equality) holds true for $6\leq k\leq 16$, our proofs would lead to the conclusion that $h_2(k)=g_2(k)$ for $6\leq k\leq 16$. In these 11 instances, in addition to the 6 cases outlined in Theorem~\ref{thm:main2} under the condition $g_r(k)> f(k)$, the conclusion $h_r(k)=g_r(k)$ would exclusively apply, while all other cases would follow $h_r(k)=f(k)$.

\item In all cases where $g_r(k)>f(k)$ occurs, our proofs can be easily modified into a stability result, with the corresponding $H^*(n,\vec{\mathbf{a}})$ as the $r$-edge-colored extremal hypergraph, where $\vec{a}$ denotes the unique maximum vector for the optimization $g_r(k)= \max_{\vec{\mathbf{a}}\in \mathbb{N}^r_{k-1}} g_r(\vec{\mathbf{a}})$.

\item The key technical Lemma~\ref{lem:key1} (as well as other lemmas in Subsection~\ref{subsec:local} under the good-gadget-free condition) holds without requiring the existence of perfect matchings or the condition $n\equiv 0\pmod k$.
So our proofs can also help illuminate the establishment of the corresponding threshold for a {\it near-perfect matching} (i.e., a matching of size $\frac{n}{k}-O_k(1)$ in $n$-vertex $k$-graphs) with high discrepancy.
    For a more in-depth discussion, we recommend readers refer to the pertinent paragraph in the concluding section of Gishboliner-Glock-Sgueglia \cite{GGS}.

\item We have seen that in $k$-graphs for $k\geq 3$, the $\ell$-degree discrepancy threshold $h^\ell_r(k)$ equals the $\ell$-degree existence threshold $f^\ell(k)$ of perfect matchings for all but a finite number of cases.
Recall the well-known matching conjecture that $f^\ell(k)=\max\left\{\frac12, 1-\left(1-\frac1k\right)^{k-\ell}\right\}.$
Constructions that result in the constant $\frac12$ are commonly referred to as {\it parity obstacles}.
Also recall the definition of $m_\ell(k, n)$.
We wonder if in the case that $r=2$, $\ell=k-1$ and $k\geq 5$ (note that $h_2^{k-1}(k)=f^{k-1}(k)=\frac12$ is determined by the parity obstacle), there exist positive constants $c=c(k,\ell)$ and $C=C(k,\ell)$ such that for sufficiently large $n$ with $n\equiv0\pmod k$, any $2$-edge-colored $n$-vertex $k$-graph $H$ with $\delta_{k-1}(H)\geq m_{k-1}(k, n)+C$ contains a perfect matching with at least $\frac{n}{2k}+c\cdot n$ edges with the same color.
In essence, we suspect that in these cases, once a perfect matching emerges, a perfect matching with high discrepancy appears almost immediately.
\end{itemize}

\medskip

\noindent {\bf Acknowledgements.}
After submitting the first version of our draft to arXiv, we learned that H. H\'an, R. Lang, J. P. Marciano, M. Pavez-Sign\'e, N. Sanhueza-Matamala, A. Treglown, and C. Z\'arate-Guer\'en independently and simultaneously proved the same main results \cite{HLM24}.
We would like to thank N. Sanhueza-Matamala for pointing out a missing case in the proof of Lemma~12 (B) in the first version, which prompted updates in this new version.
We also appreciate A. Treglown for providing a 3-uniform counterexample to a question we raised in the final point of our concluding remarks in the first version of this paper,
which we have addressed accordingly.

\appendix
\appendixpage

\section{Proof of Lemma~\ref{lem:type2}}\label{sec:type2}
In this section, we give a detailed proof of Lemma \ref{lem:type2}.

We will first establish some basic inequalities in the following two propositions.
These propositions will be frequently used in the numerical proofs in this paper.
Throughout, we use $\exp(x)$ to denote the exponential function $e^x$.

\begin{prop}\label{prop:basic_ineq}
Let $k$ be a positive integer. Then the following hold:
\begin{itemize}
\item[(a)] For $k\geq 3$, $(1-\frac{1}{k})^{k-1}\leq (1-\frac{1}{3})^2=\frac{4}{9}\leq 1.21 e^{-1}$;
\item[(b)] For $k\geq 10$, $(1-\frac{1}{k})^{k-1}\leq (1-\frac{1}{10})^9\leq 1.06 e^{-1}$;
\item[(c)] For $k\geq 20$, $(1-\frac{1}{k})^{k-1}\leq (1-\frac{1}{20})^{19}\leq 0.378$, $(1-\frac{3}{2k})^{k-2}\leq (1-\frac{3}{40})^{18}\leq 0.246$
and $(1-\frac{1}{2k})^{k-1}\leq (1-\frac{1}{40})^{19}\leq 0.619$;
\item[(d)] For $k\geq 3$, if $r\geq 3$ and $1\leq m\leq k-2$ are integers, then
\[
\sqrt{\frac{k-1}{k-m-1}}\left(\frac{1}{2\pi}\right)^{\frac{m}{2}}
\leq \sqrt{2m}\left(\frac{1}{2\pi}\right)^{\frac{m}{2}}
\leq \exp\left(-\frac{m}{3}\right)\leq \exp\left(-\frac{m}{r}\right).
\]
\end{itemize}
\end{prop}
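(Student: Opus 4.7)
The plan is to handle parts (a)--(c) by a single monotonicity-plus-evaluation strategy, and part (d) by treating each of its three sub-inequalities separately with elementary calculus.

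For parts (a), (b), (c), each claim has the form $\phi(k)\leq \phi(k_{0})\leq C$, where $\phi$ is one of the functions $(1-1/k)^{k-1}$, $(1-3/(2k))^{k-2}$, $(1-1/(2k))^{k-1}$ and $k_{0}\in\{3,10,20\}$ is the smallest relevant integer. For the first of these the plan is to compute
\[
\frac{d}{dk}\ln\phi(k)=\ln\!\left(1-\frac{1}{k}\right)+\frac{1}{k},
\]
which is strictly negative for $k\geq 2$ because the Taylor expansion of $\ln(1-x)+x$ begins with the strictly negative term $-x^{2}/2$; hence $\phi$ is decreasing on $[k_{0},\infty)$. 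Analogous logarithmic-derivative computations apply to $(1-3/(2k))^{k-2}$ and $(1-1/(2k))^{k-1}$: in each case one writes the derivative of the logarithm as $\ln(1-\alpha/k)+\alpha(1-\beta/k)/(k-\alpha)$ for suitable $\alpha,\beta$, and expands in $1/k$ to see that the leading term dominates. Once monotonicity is in hand, the upper bound reduces to a direct numerical comparison at $k=k_{0}$, e.g.\ $4/9 \leq 1.21\,e^{-1}$ (since $4e\leq 10.874<10.89$), and similarly for $(9/10)^{9}$, $(19/20)^{19}$, $(37/40)^{18}$ and $(39/40)^{19}$.

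For part (d), I would argue each inequality in turn. For the first, $\sqrt{(k-1)/(k-m-1)}\leq\sqrt{2m}$ is equivalent to $k-1\leq 2m(k-m-1)$; as a function of real $m$, the right-hand side is a downward parabola on $[1,k-2]$, so on this interval it is minimized at the endpoints $m=1$ and $m=k-2$, both giving $2(k-2)$, and $2(k-2)\geq k-1$ exactly when $k\geq 3$. For the second inequality, taking logarithms reduces it to
\[
\ln(2m)\leq m\bigl(\ln(2\pi)-\tfrac{2}{3}\bigr),
\]
and setting $c=\ln(2\pi)-2/3>1.17$ it suffices to check that $h(m):=cm-\ln(2m)$ is nonnegative for $m\geq 1$; since $h'(m)=c-1/m>0$ on $[1,\infty)$ the function is increasing, and $h(1)=c-\ln 2>0$. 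The third inequality $\exp(-m/3)\leq \exp(-m/r)$ is immediate from $r\geq 3$ and $m>0$.

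I expect no real obstacle: all four parts are routine analytic estimates. The only mildly delicate point is verifying the monotonicity in (c) for the two non-standard functions $(1-3/(2k))^{k-2}$ and $(1-1/(2k))^{k-1}$, which requires a slightly more careful expansion of the logarithmic derivative, but the leading-term dominance argument still works. The numerical checks themselves are straightforward and can be carried out to a couple of decimal places.
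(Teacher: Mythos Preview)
Your proposal is correct and follows essentially the same approach as the paper: for (a)--(c) you prove monotonicity of the relevant functions and then evaluate at the smallest admissible $k$, and for (d) you handle the three inequalities separately by elementary means. The only cosmetic difference is that the paper establishes monotonicity via discrete ratios $h(k)/h(k+1)\geq 1$ rather than logarithmic derivatives, and similarly uses a ratio test for the second inequality in (d); both implementations are equally valid.
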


\begin{proof}
We first show that $h_1(k) = (1-\frac{1}{k})^{k-1}$ is decreasing over $\mathbb{N}^+$.
Indeed, it follows as
\[
\frac{h_1(k)}{h_1(k+1)}=\frac{(k^2-1)^{k-1}(k+1)}{k^{2k-1}}=\frac{(1-\frac{1}{k^2-1})^{k-1}}{1-\frac{1}{k+1}}\geq \frac{1-\frac{k-1}{k^2-1}}{1-\frac{1}{k+1}}=1.
\]
Hence by simple calculation, we can directly derive (a), (b) and the first assertion of (c).
Similarly, we can show that
$h_2(k)=(1-\frac{3}{2k})^{k-2}$ and $h_3(k)=(1-\frac{1}{2k})^{k-1}$ are decreasing for $k\geq 2$:
\begin{align*}
\frac{h_2(k)}{h_2(k+1)}=\frac{2k+2}{2k-1}\left(\frac{4k^2-2k-6}{4k^2-2k}\right)^{k-2}=\frac{(1-\frac{3}{k(2k-1)})^{k-2}}{1-\frac{3}{2k+2}}
\geq \frac{1-\frac{3(k-2)}{k(2k-1)}}{1-\frac{3}{2k+2}}\geq 1
\end{align*}
\begin{align*}
\frac{h_3(k)}{h_3(k+1)}=\frac{2k+2}{2k+1}\left(\frac{4k^2+2k-2}{4k^2+2k}\right)^{k-1}=\frac{(1-\frac{1}{k(2k+1)})^{k-1}}{1-\frac{1}{2k+2}}
\geq \frac{1-\frac{k-1}{k(2k+1)}}{1-\frac{1}{2k+2}}\geq 1.
\end{align*}
Hence for $k\geq 20$, we have $(1-\frac{3}{2k})^{k-2}\leq (1-\frac{3}{40})^{18}\leq 0.246$ and $(1-\frac{1}{2k})^{k-1}\leq (1-\frac{1}{40})^{19}\leq 0.619$. This completes the proof for (c).
For (d), since $m(k-m-1)\geq k-2$, we have $\frac{k-1}{k-m-1}\leq \frac{m(k-1)}{k-2}\leq 2m$, which implies the first inequality of (d).
Let $\gamma(m)=\sqrt{2m}(\frac{e^{1/3}}{\sqrt{2\pi}})^m$.
Then it suffices to show $\gamma(m)\leq 1$.
To see it, we have $\gamma(1)=0.7874<1$ by calculate directly.
Observe that for all $m\in \mathbb{N}^+$
\[
\frac{\gamma(m+1)}{\gamma(m)}=\sqrt{\frac{m+1}{m}}\frac{e^{1/3}}{\sqrt{2\pi}}\leq \frac{\sqrt{2}e^{1/3}}{\sqrt{2\pi}}=0.7874<1,
\]
so $\gamma(m)\leq \gamma(1) < 1$ holds for all positive integer $m$, completing the proof.
\end{proof}

We will also use Stirling's Formula to provide the following estimations on the factorials.
\begin{prop}\label{lem:Stirling}
For all $k\in \mathbb{N}^+$, $k!>\sqrt{2\pi}(\frac{k}{e})^{k}\sqrt{k}$. Moreover, we have
\[
\frac{k!}{\sqrt{2\pi}(\frac{k}{e})^k\sqrt{k}}\leq 1.01 \mbox{ for } k\geq 9
\mbox{, and }
\frac{k!}{\sqrt{2\pi}(\frac{k}{e})^k\sqrt{k}}\leq 1.05 \mbox{ for } k\geq 2.
\]
\end{prop}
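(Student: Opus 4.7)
The plan is to derive both parts of the proposition from the refined form of Stirling's formula due to Robbins: for every positive integer $n$,
\[
n! \;=\; \sqrt{2\pi n}\left(\frac{n}{e}\right)^n \exp(r_n), \qquad \text{where } \frac{1}{12n+1} \;<\; r_n \;<\; \frac{1}{12n}.
\]
The first inequality $k! > \sqrt{2\pi}\sqrt{k}\,(k/e)^k$ is immediate from $r_k > 0$. For the remaining two upper bounds, we use $\exp(r_k) \leq \exp(1/(12k))$: for $k\geq 9$ this gives the ratio at most $\exp(1/108) \leq 1.01$, and for $k\geq 2$ it gives the ratio at most $\exp(1/24) \leq 1.05$. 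The numerical verifications are elementary via the inequality $\log(1+x) > x/(1+x)$ for $x>0$: indeed $\log(1.01) > 1/101 > 1/108$ and $\log(1.05) > 1/21 > 1/24$.

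To keep the argument self-contained, I would prove Robbins' inequality by the standard calculation. Set $a_n := \log n! - (n+\tfrac12)\log n + n$, so that $n!/(\sqrt{n}\,(n/e)^n) = \exp(a_n)$. A direct substitution gives
\[
a_n - a_{n+1} \;=\; \left(n+\tfrac12\right)\log\!\left(1 + \tfrac{1}{n}\right) - 1.
\]
Applying the identity $\tfrac12\log\!\frac{1+x}{1-x} = \sum_{j\geq 0} x^{2j+1}/(2j+1)$ with $x = 1/(2n+1)$ yields the absolutely convergent series
\[
a_n - a_{n+1} \;=\; \sum_{j=1}^\infty \frac{1}{(2j+1)(2n+1)^{2j}}.
\]
Bounding this geometric-type sum from above by $\tfrac13$ times $\sum_{j\geq 1}(2n+1)^{-2j} = 1/((2n+1)^2-1) = 1/(4n(n+1))$ gives $a_n - a_{n+1} < \tfrac{1}{12n}-\tfrac{1}{12(n+1)}$, while the single leading term plus the elementary comparison $36(2n+1)^2 \leq (12n+1)(12n+13)$ gives $a_n - a_{n+1} > \tfrac{1}{12n+1}-\tfrac{1}{12(n+1)+1}$. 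Hence $\{a_n\}$ is decreasing, its limit $L$ exists, and telescoping produces $\tfrac{1}{12n+1} < a_n - L < \tfrac{1}{12n}$; Wallis' product identifies $L = \log\sqrt{2\pi}$, so $r_n := a_n - L$ satisfies Robbins' bound.

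The entire argument is analytical and presents no genuine obstacle. The only bookkeeping steps are the identification of the power series for $(n+\tfrac12)\log(1+1/n)$ and the two elementary comparisons used to extract the telescoping two-sided bounds on $a_n - a_{n+1}$. Since this proposition is merely an auxiliary tool feeding into the factorial estimates in the proof of Lemma~\ref{lem:g<k}, no new combinatorial or probabilistic machinery is required, and the proof reduces entirely to invoking Stirling's formula with an explicit error term and performing two numerical checks.
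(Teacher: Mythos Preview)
Your proof is correct but takes a different route from the paper. The paper defines $h(k)=k!/\big((k/e)^k\sqrt{k}\big)$, checks directly that $h(k+1)/h(k)=e/(1+1/k)^{k+1/2}<1$, and hence that $h(k)$ is strictly decreasing with limit $\sqrt{2\pi}$ (quoting only the asymptotic form of Stirling). The lower bound then follows immediately, while the two upper bounds are obtained by evaluating $h(9)$ and $h(2)$ numerically. Your approach instead proves Robbins' quantitative error term $\tfrac{1}{12n+1}<r_n<\tfrac{1}{12n}$ from scratch and reads off all three inequalities from $\exp(1/108)\le 1.01$ and $\exp(1/24)\le 1.05$. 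The paper's argument is shorter and needs no series manipulations, but it relies on two honest numerical evaluations of factorial ratios; yours is longer but fully analytic and never requires computing $9!/(9/e)^9\sqrt{9}$ by hand. Either is entirely adequate for this auxiliary proposition.
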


\begin{proof}
Let $h(k) = \frac{k!}{(\frac{k}{e})^k\sqrt{k}}$. Then we have
$\frac{h(k+1)}{h(k)}=\frac{e(k+1)}{(k+1)(1+\frac{1}{k})^{k+\frac{1}{2}}}=\frac{e}{(1+\frac{1}{k})^{k+\frac{1}{2}}}< 1,$
thereby $h(k)$ is decreasing.
On the other hand, using Stirling's Formula, we see $\lim_{k\to\infty} h(k)=\sqrt{2\pi}$.
Hence $h(k)> \sqrt{2\pi}$ for all $k\in \mathbb{N}^+$,
which implies that $k!>\sqrt{2\pi}(\frac{k}{e})^{k}\sqrt{k}$.
For $k \geq 9$, we have $h(k) \leq h(9) \leq 1.01\sqrt{2\pi}$, which implies that $\frac{k!}{\sqrt{2\pi}(\frac{k}{e})^k\sqrt{k}} \leq 1.01$ for
$k\geq 9$. Similarly, we have $h(k)\leq h(2)\leq 1.05\sqrt{2\pi}$ for $k\geq 2$, indicating that $\frac{k!}{\sqrt{2\pi}(\frac{k}{e})^k\sqrt{k}} \leq 1.05$ for $k\geq 2$.
\end{proof}

Now we are ready to prove some numerical conclusions using these tools.

\begin{proof}[proof of Lemma \ref{lem:type2}]
Recall that $r\geq 3, k\geq 3$, $1\leq b_1 \leq \ldots \leq b_r$
and $k+1=b_1 + \ldots + b_r$.

Firstly, we prove item (A). 
In this case $b_1=1$. 
Suppose that $(V_1,\ldots, V_r)$ is the vertex partition of $H=\tilde{H}(n,(1,b_2,\ldots,b_r))$ such that $|V_i|= \frac{rb_i-1}{rk}n$ for $i\in [r]$.
Then by definition, we have 
\[E(\tilde{H}(n,(1,b_2,\ldots,b_r)))\subseteq \binom{V_1}{1}\times \binom{V_2\cup\ldots\cup V_r}{k-1}\cup \binom{V_2\cup\ldots\cup V_r}{k}.\]
Let $V_1',\ldots V_{r-1}'$ be a partition of $V_1$ such that $|V_i'|=\frac{n}{rk}$ for $1\leq i\leq r-1$ and let $V_r' = V_2\cup \ldots \cup V_r$.
Then $|V_r'| = \frac{r(k-1)+1}{rk}n$ and $H((V_1',\ldots, V_r'), (0,\ldots,0,k-1)) = \binom{V_1}{1}\times\binom{V_2\cup\ldots\cup V_r}{k-1}\cup \binom{V_2\cup\ldots\cup V_r}{k}$.
Hence
\[
\tilde{H}(n,(1,b_2,\ldots,b_r))\subseteq H((V_1',\ldots, V_r'), (0,\ldots,0,k-1))\cong H(n,(0,\ldots,0,k-1)), 
\]
finishing the proof of (A).

Now we prove (B). 
In this case $b_1\geq 2$ and $k= b_1 + \ldots + b_r - 1 \geq 2r - 1 \geq 5$. 
By the proof of Proposition \ref{prop:basic_ineq}, $h_1(k)=(1-\frac{1}{k})^{k-1}$ is decreasing over $\mathbb{N}^+$, hence
\[
f(k)\geq f_0(k) = 1 - (1-\frac{1}{k})^{k-1}
\geq 1- (1-\frac{1}{5})^{5-1} =0.5904.
\]
By the observation that $\frac{b_i}{rb_i-1}\leq \frac{b_1}{rb_1-1}$ for all $i\geq 2$, we have
\begin{equation}\label{equ:inequality_of_frac}
\frac{b_1-1}{rb_1-1} + \sum_{i=2}^r\frac{b_i}{rb_i-1}
\leq \frac{b_1-1}{rb_1-1} + (r-1)\frac{b_1}{rb_1-1}
=1.    
\end{equation}
Using inequality \eqref{equ:inequality_of_frac} and Proposition \ref{lem:Stirling}, we have
\begin{align*}
\tilde{g}_r(\vec{\mathbf{b}})
=~ &\binom{k}{b_1-1,b_2,\ldots,b_r}\cdot \prod_{i=2}^r\left(\frac{rb_i-1}{rk}\right)^{b_i}\cdot \left(\frac{rb_i-1}{rk}\right)^{b_1-1}\cdot\left(\frac{r(b_1-1)}{rb_1-1}+\sum_{i=2}^r\frac{rb_i}{rb_i-1}\right)\\
\leq~ & r\cdot \binom{k}{b_1-1,b_2,\ldots,b_r}\cdot \prod_{i=2}^r\left(\frac{rb_i-1}{rk}\right)^{b_i}\cdot \left(\frac{rb_i-1}{rk}\right)^{b_1-1}\\
\leq~ & 1.05r \cdot \frac{\sqrt{2\pi}\cdot k^{k+\frac{1}{2}}}{\sqrt{2\pi}\cdot (b_1-1)^{b_1-1+\frac{1}{2}} \prod_{i=2}^r \left(\sqrt{2\pi}\cdot  b_i^{b_i+\frac{1}{2}}\right)}\cdot \prod_{i=2}^r\left[\left(\frac{b_i}{k}\right)^{b_i}\cdot\left(1-\frac{1}{rb_i}\right)^{b_i}\right]\\
&\cdot\left(\frac{b_1-1}{k}\right)^{b_1-1}\cdot\left(1+\frac{r-1}{r(b_1-1)}\right)^{b_1-1}\\
\leq~ & 1.05r \cdot\sqrt{\frac{k}{(b_1-1)\cdot b_2\cdot\ldots\cdot b_r}}\left(\frac{1}{\sqrt{2\pi}}\right)^{r-1}\cdot\frac{\left(1+\frac{1}{r(b_1-1)}\right)^{(b_1-1)(r-1)}}{\prod_{i=2}^r\left(1+\frac{1}{rb_i}\right)^{b_i}}\\
\leq~ &  1.05r \cdot\sqrt{\frac{k}{(b_1-1)\cdot b_2\cdot\ldots\cdot b_r}}\left(\frac{1}{\sqrt{2\pi}}\right)^{r-1},
\end{align*}
where the third inequality holds because $1+kx\leq (1+x)^k$ and $(1+x)(1-x)=1-x^2\leq 1$ for $x\geq 0$ and $k\geq 1$ and the last inequality holds because $h_2(x)=(1+\frac{1}{x})^{x}$ is increasing for $x> 0$, implying that
$(1+\frac{1}{r(b_i-1)})^{b_i-1}\leq (1+\frac{1}{rb_i})^{b_i}$ for all $2\leq i\leq r$.

Noticing that $k + 1=b_1 + b_2 +\ldots + b_r$, $r\geq 3$ and $b_1, b_2,\ldots,b_r\geq 2$, we have $\frac{k}{(b_1-1)\cdot b_2 \cdot \ldots \cdot b_r}\leq \frac{5}{4}$ and it reaches the maximum value if and only if $r=3$, $k=5$ and $b_1=b_2=b_3=2$.
Hence
\[
\tilde{g}_r(\vec{\mathbf{b}})\leq 1.05\cdot \sqrt{\frac{5}{4}}\cdot r\left(\frac{1}{\sqrt{2\pi}}\right)^{r-1}.
\]
Let $h_3(r)=r\left(\frac{1}{\sqrt{2\pi}}\right)^{r-1}$. Then 
\[
\frac{h_3(r+1)}{h_3(r)} = \frac{r+1}{\sqrt{2\pi}\cdot r}\leq 1 \text{ for } r\geq 3.
\]
This implies that $h_3(r)$ is decreasing for $r\geq 3$.
Hence we have
\[
\tilde{g}_r(\vec{\mathbf{b}})\leq 1.05\cdot \sqrt{\frac{5}{4}}\cdot 3\cdot \left(\frac{1}{2\pi}\right)\leq 0.561 < 0.5904\leq f(k).
\]
This completes the proof of Lemma \ref{lem:type2}.

\end{proof}

\section{Proof of Lemma~\ref{lem:g<k}}\label{sec:g<k}
In this section, we give a detailed proof to show $f(k)>g_r(k)$ if one of the following cases holds:
\[
\mbox{(I). $r=2$ and $k\geq 20$; ~~~~ (II). $r\geq 3$ and $k\geq 10$; ~~~~ (III). $r\geq 6$ and $3\leq k\leq 9$.}
\]

In the remainder of this section, we establish Lemma~\ref{lem:g<k} by dividing it into three lemmas,
addressing cases (I), (II), and (III) respectively.
In each lemma, we demonstrate that $g_r(k) < f_0(k) \leq f(k)$.

We will use Propositions \ref{prop:basic_ineq} and \ref{lem:Stirling} to illustrate the following numerical conclusions.

\begin{lem}\label{lem:rk_ineq3}
Suppose that $r=2$ and $k\geq 20$ are positive integers. Then $g_r(k) < f(k)$.
\end{lem}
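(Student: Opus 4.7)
The plan is to prove $g_2(k) < f_0(k) \leq f(k)$, where $f_0(k) := 1 - (1-1/k)^{k-1}$. By Lemma~\ref{lem:smallest_deg}, $g_2(k) = \max g_2(\vec{\mathbf{a}})$ over $\vec{\mathbf{a}} = (a_1, k-1-a_1) \in \mathbb{N}^2_{k-1}$ with $0 \leq a_1 \leq (k-1)/2$, so it suffices to bound each $g_2(\vec{\mathbf{a}})$ uniformly below $f_0(k)$. I split into three cases based on $a_1$: $a_1 = 0$, $a_1 = 1$, and $a_1 \geq 2$.

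For $a_1 = 0$, direct substitution yields $g_2((0, k-1)) = (1 - 1/(2k))^{k-1}$, so the target inequality becomes $(1 - 1/(2k))^{k-1} + (1 - 1/k)^{k-1} < 1$. This follows from Proposition~\ref{prop:basic_ineq}~(c): for $k \geq 20$, the two terms are at most $0.619$ and $0.378$ respectively, summing to at most $0.997 < 1$. For $a_1 = 1$, the formula simplifies to $g_2((1, k-2)) = \frac{5k-6}{2k}(1 - 3/(2k))^{k-2}$; since $(5k-6)/(2k) \leq 5/2$ and $(1 - 3/(2k))^{k-2} \leq 0.246$ by Proposition~\ref{prop:basic_ineq}~(c), we obtain $g_2((1, k-2)) \leq 0.615 < 0.622 \leq f_0(k)$.

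For $a_1 = a \geq 2$ with $b = k-1-a$, I apply Stirling's formula (Proposition~\ref{lem:Stirling}) to get $\binom{k-1}{a} \leq \frac{1.05}{\sqrt{2\pi}} \sqrt{(k-1)/(ab)} \cdot (k-1)^{k-1}/(a^a b^b)$. Combining this with $(1 + 1/(2a))^a (1 + 1/(2b))^b \leq e$ and the factor bound $1 + \frac{a(2b+1)}{(2a+1)(b+1)} \leq \frac{4a+1}{2a+1}$ (since $2b+1 \leq 2(b+1)$) yields
\[
g_2((a, b)) \leq \frac{1.05\, e}{\sqrt{2\pi}} \sqrt{\frac{k-1}{ab}} \left(1 - \frac{1}{k}\right)^{k-1} \cdot \frac{4a+1}{2a+1}.
\]
A further split into $a = 2$ (using $ab = 2(k-3)$ and $(4a+1)/(2a+1) = 9/5$) and $a \geq 3$ (using $ab \geq 3(k-4)$ and $(4a+1)/(2a+1) < 2$), combined with $(1-1/k)^{k-1} \leq 0.378$ from Proposition~\ref{prop:basic_ineq}~(c), gives $g_2((a,b)) < 0.58 < 0.622 \leq f_0(k)$ in both sub-cases; monotonicity of $(k-1)/(ab)$ and $(1-1/k)^{k-1}$ in $k$ ensures the bounds hold uniformly over $k \geq 20$.

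The main obstacle is numerical tightness: in the $a_1 = 0$ case, $(1 - 1/(2k))^{k-1}$ and $f_0(k)$ converge to $e^{-1/2} \approx 0.607$ and $1 - e^{-1} \approx 0.632$ respectively, leaving only modest slack, so the explicit constants from Proposition~\ref{prop:basic_ineq}~(c) are essential. In the $a_1 \geq 2$ case, the naive bound $(4a+1)/(2a+1) \leq 2$ is not sharp enough at $a = 2$, which is why the sub-case split on $a = 2$ versus $a \geq 3$ is required.
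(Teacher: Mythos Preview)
Your proof is correct and follows essentially the same approach as the paper: the identical case split on $a_1\in\{0,1\}$ and $a_1\geq 2$ (with the further sub-split at $a=2$ versus $a\geq 3$), the same use of Proposition~\ref{prop:basic_ineq}(c) for the numerical constants, and the same Stirling estimate on $\binom{k-1}{a}$ combined with $(1+\tfrac{1}{2a})^a(1+\tfrac{1}{2b})^b\leq e$ and $\tfrac{2b+1}{b+1}\leq 2$. The only cosmetic differences are that you take the Stirling constant $1.05$ where the paper uses the sharper $1.01$ (permissible since $k-1\geq 9$), and you present the $a_1=1$ formula in the consolidated form $\tfrac{5k-6}{2k}(1-\tfrac{3}{2k})^{k-2}$ rather than the paper's factored version.
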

\begin{proof}
For $r=2$, by Lemma~\ref{lem:smallest_deg} we have
\[
g_2(k)
=\max_{\substack{0\leq a\leq b,\\ a+b=k-1}}g_r(a,b)
=\max_{\substack{0\leq a\leq b,\\ a+b=k-1}}\binom{k-1}{a}\left(\frac{2a+1}{2k}\right)^a\left(\frac{2b+1}{2k}\right)^b\left(1+\frac{a(2b+1)}{(2a+1)(b+1)}\right).
\]
For $k\geq 20$, using Proposition \ref{prop:basic_ineq} (c), we have
$f(k)\geq f_0(k)= 1-(1-\frac{1}{k})^{k-1}\geq 1-0.378=0.622.$
So it suffices to show that under the conditions $k\geq 20$, $a\leq b$ and $a+b=k-1$, we have $g_2(a,b)< 0.622$.
To see this, by Proposition \ref{prop:basic_ineq} (c), we see that under the assumption $k\geq 20$,
\begin{align*}
\mbox{if~} &a=0,~ g_2(a,b)=g_2(0,k-1)= \left(\frac{2k-1}{2k}\right)^{k-1}\leq 0.619< 0.622, \mbox{~and~}\\
\mbox{if~} &a=1,~ g_2(a,b)=g_2(1,k-2)=\frac{3(k-1)}{2k}\left(\frac{2k-3}{2k}\right)^{k-2}\left(1+\frac{2k-3}{3(k-1)}\right)\leq \frac{3}{2}\cdot 0.246\cdot \frac{5}{3}< 0.622.
\end{align*}
Now it remains to consider when $2\leq a\leq b$.
Using Proposition \ref{prop:basic_ineq} (c) and Proposition \ref{lem:Stirling}, we have
\begin{align*}
g_2(a,b)
&= \binom{k-1}{a}\left(\frac{2a}{2(k-1)}\right)^a\left(1+\frac{1}{2a}\right)^a\left(\frac{2b}{2(k-1)}\right)^b\left(1+\frac{1}{2b}\right)^b \left(1-\frac{1}{k}\right)^{k-1}\cdot \left(1+\frac{a(2b+1)}{(2a+1)(b+1)}\right)\\
&\leq \frac{1.01 (\frac{k-1}{e})^{k-1}\sqrt{2\pi (k-1)}}{(\frac{a}{e})^a\sqrt{2\pi a}\cdot (\frac{b}{e})^b\sqrt{2\pi b}}\cdot\frac{a^a}{(k-1)^a}\cdot\exp(\frac{a}{2a})\cdot\frac{b^b}{(k-1)^b}\cdot\exp(\frac{b}{2b})\cdot 0.378\cdot\left(1+\frac{2a}{2a+1}\right)\\
&=1.01\cdot 0.378\cdot e\cdot\frac{(k-1)^{k-1+\frac{1}{2}}}{(k-1)^{a+b}}\cdot \frac{a^a}{a^{a+\frac{1}{2}}}\cdot \frac{b^b}{b^{b+\frac{1}{2}}}\cdot (2\pi)^{-\frac{1}{2}}\cdot\frac{4a+1}{2a+1}\\
&=1.01\cdot 0.378\cdot e \cdot \sqrt{\frac{k-1}{2\pi ab}}\cdot \frac{4a+1}{2a+1},
\end{align*}
where the last equality holds because $k-1=a+b$.
If $a=2$, then this implies that
\[
g_2(a,b)\leq \sqrt{\frac{1}{2\pi}}\cdot 1.01\cdot 0.378 \cdot\sqrt{\frac{k-1}{2(k-3)}}\cdot e \cdot \frac{9}{5}
\leq
\sqrt{\frac{1}{2\pi}}\cdot 1.01\cdot 0.378 \cdot\sqrt{\frac{19}{34}}\cdot e \cdot \frac{9}{5}\leq 0.558 <0.622.
\]
So we may assume that $a\geq 3$. In this case, we have
\[
g_2(a,b)\leq \sqrt{\frac{1}{2\pi}}\cdot 1.01\cdot 0.378 \cdot\sqrt{\frac{k-1}{3(k-4)}}\cdot e \cdot 2
\leq \sqrt{\frac{1}{2\pi}}\cdot 1.01\cdot 0.378 \cdot\sqrt{\frac{19}{48}}\cdot e \cdot 2
\leq 0.521<0.622.
\]
Now for all integers $0\leq a\leq b$ and $a+b=k-1$, we establish $g_2(a,b)< f(k)$, finishing the proof.
\end{proof}

Next we prove the case (II) of Lemma~\ref{lem:g<k}.

\begin{lem}\label{lem:rk_ineq1}
Suppose that $r\geq 3$ and $k\geq 10$ are positive integers. Then
$g_r(k)< f(k)$.
\end{lem}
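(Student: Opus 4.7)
The plan is to show directly that $g_r(\vec a)<f_0(k)\leq f(k)$ uniformly over $\vec a\in\mathbb{N}^r_{k-1}$, where $f_0(k)=1-(1-1/k)^{k-1}$. By Proposition~\ref{prop:basic_ineq}(b), for $k\geq 10$ we have $(1-1/k)^{k-1}\leq 1.06 e^{-1}$, hence $f_0(k)\geq 1-1.06 e^{-1}>0.61$, so it is enough to prove $g_r(\vec a)<1-1.06 e^{-1}$ uniformly. Rewriting Lemma~\ref{lem:smallest_deg}(A) via $(ra_i+1)/(rk)=(a_i+1/r)/k$ gives
\[
g_r(\vec a)=\binom{k-1}{a_1,\ldots,a_r}\frac{\prod_i(a_i+1/r)^{a_i}}{k^{k-1}}\cdot F,\quad F:=1+\frac{a_1}{ra_1+1}\sum_{i=2}^{r}\frac{ra_i+1}{a_i+1}.
\]
Let $I=\{i:a_i\geq 1\}$ and $t=|I|$. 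Applying Proposition~\ref{lem:Stirling} with the sharper constant $1.01$ (valid since $k-1\geq 9$) to $(k-1)!$, the Stirling lower bound to each $a_i!$, and the inequality $(1+1/(ra_i))^{a_i}\leq e^{1/r}$, I would derive the master bound
\[
g_r(\vec a)\leq\frac{1.01\cdot 1.06}{e}\,e^{t/r}\,F\,\sqrt{\frac{k-1}{(2\pi)^{t-1}\prod_{i\in I}a_i}}.
\]

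I then split into three cases according to $(a_1,t)$. In \textbf{Case A} ($t=1$, so $\vec a=(0,\ldots,0,k-1)$), a direct factorization gives $g_r(\vec a)=(1-1/k)^{k-1}\bigl(1+1/(r(k-1))\bigr)^{k-1}\leq(1-1/k)^{k-1}e^{1/r}\leq (1.06/e)e^{1/3}<0.55$. In \textbf{Case B} ($a_1=0$ and $t\geq 2$), $F=1$; using $\prod_{i\in I}a_i\geq k-t$ (minimum attained at $(1,\ldots,1,k-t)$) together with Proposition~\ref{prop:basic_ineq}(d) applied at $m=t-1$ to replace $\sqrt{(k-1)/((2\pi)^{t-1}(k-t))}$ by $e^{-(t-1)/r}$, the master bound collapses to $(1.07/e)e^{t/r}e^{-(t-1)/r}=(1.07/e)e^{1/r}\leq (1.07/e)e^{1/3}<0.55$. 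In \textbf{Case C} ($a_1\geq 1$, so $t=r\leq k-1$), I would bound $F\leq r$ (from $(ra_i+1)/(a_i+1)\leq r$ and $a_1/(ra_1+1)\leq 1/r$) and use $\prod_{i\in I}a_i\geq k-r$, reducing the goal to
\[
1.07\,r\,\sqrt{\frac{k-1}{(2\pi)^{r-1}(k-r)}}<1-\frac{1.06}{e},
\]
or equivalently $(k-1)/(k-r)<0.325\,(2\pi)^{r-1}/r^2$. This I would verify directly for the small values of $r$ that matter (the tightest case is $r=3$, $k=10$, where $9/7\approx 1.286<1.427\approx 0.325\,(2\pi)^2/9$); for $r\geq 4$ the right-hand side grows exponentially in $r$ while $(k-1)/(k-r)$ is at most $k-1\leq 9r$, so the inequality has enormous slack.

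The main obstacle is the tightness of Case~C at the corner $(r,k)=(3,10)$ with $\vec a=(1,1,k-3)$, where the Stirling bound is roughly $0.58$ against a target of $0.61$. One cannot afford to route this case through Proposition~\ref{prop:basic_ineq}(d) as in Case~B, since that would only yield the useless bound $(1.07/e)r e^{1/r}\cdot e$; the factor $\sqrt{(k-1)/((2\pi)^{r-1}(k-r))}$ must therefore be kept explicit and estimated by hand. Everywhere else a comfortable cushion is supplied either by the Gaussian factor $(2\pi)^{-(r-1)/2}$ (Case~C for $r\geq 4$) or by the strict inequality $(1.06/e)e^{1/3}<1-1.06/e$ (Cases A and B).
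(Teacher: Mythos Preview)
Your proof follows essentially the same strategy as the paper's: rewrite $g_r(\vec a)$ via Stirling (using the constant $1.01$ since $k-1\geq 9$), pull out the factor $(1-1/k)^{k-1}\leq 1.06/e$, and split according to whether $a_1=0$ or $a_1\geq 1$. Your Cases~A and~B together reproduce the paper's treatment of the case $a_1=0$ (the paper parameterizes by $\ell=r-t$ and handles your Case~A via the trivial instance $m=0$ of Proposition~\ref{prop:basic_ineq}(d), but the arithmetic is identical), and your Case~C is exactly the paper's case $a_1>0$, with the same bound $F\leq r$ and $\prod_i a_i\geq k-r$.

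There is one slip in your Case~C justification for $r\geq 4$: the inequality $k-1\leq 9r$ is false in general (take $r=4$, $k=100$), so your stated bound on $(k-1)/(k-r)$ does not hold. The easy repair---which is precisely what the paper does---is to note that $k\geq r+1$ (from $\sum a_i=k-1$ with all $a_i\geq 1$) forces $(k-1)/(k-r)\leq r$, whence the target inequality becomes $r^3<0.325\,(2\pi)^{r-1}$; this holds at $r=4$ ($64<80.6$) and the ratio only improves thereafter. With this correction your argument is complete and matches the paper's line by line.
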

\begin{proof}
By Proposition \ref{prop:basic_ineq}, we have $f(k)\geq 1-1.06e^{-1}\geq 0.61$ for $k\geq 10$.
So it suffices to show that for all integers $0\leq a_1\leq \ldots \leq a_r$ with $a_1+\ldots+a_r=k-1$, we have
\[
g_r(a_1,\ldots,a_r)=\binom{k-1}{a_1,\ldots,a_r}\cdot \prod_{i=1}^r\left(\frac{ra_i+1}{rk}\right)^{a_i}\cdot \left(1+\frac{a_1}{ra_1+1}\sum_{i=2}^r\frac{ra_i+1}{a_i+1}\right)< 1-1.06e^{-1}.
\]

First we consider there exists some $1\leq \ell \leq r-1$ such that $a_1=\ldots=a_\ell=0$ and $a_{\ell+1},\ldots,a_r>0$.
Note that $\prod_{i=\ell+1}^r a_i$ is minimized by $(a_{\ell+1},\ldots, a_{r-1},a_r)=(1,\ldots, 1, k-r+\ell)$.
Since $a_1=0$, we have
\begin{align*}
g_r(a_1,\ldots,a_r)
&=\binom{k-1}{a_1,\ldots, a_r}\prod_{i=\ell+1}^r\left [
\left(\frac{ra_i}{r(k-1)}\right)^{a_i}\left(1+\frac{1}{ra_i}\right)^{a_i}\right ]\cdot \left(\frac{r(k-1)}{rk}\right)^{k-1}\\
&\leq\frac{1.01(\frac{k-1}{e})^{k-1}\sqrt{2\pi(k-1)}}{\prod_{i=\ell+1}^r(\frac{a_i}{e})^{a_i}\sqrt{2\pi a_i}}\cdot\prod_{i=\ell+1}^r\left [ \frac{a_i^{a_i}}{(k-1)^{a_i}}\exp\left(\frac{a_i}{ra_i}\right)\right]\cdot 1.06\cdot e^{-1}\\
&=(1.01\cdot 1.06)\cdot \sqrt{\frac{k-1}{\prod_{i=\ell+1}^r a_i}}\cdot \left(\frac{1}{2\pi}\right)^{\frac{r-\ell-1}{2}}\cdot\exp\left(-\frac{\ell}{r}\right)\\
&\leq(1.01\cdot 1.06)\cdot \sqrt{\frac{k-1}{(k-1)-(r-\ell-1)}}\cdot \left(\frac{1}{2\pi}\right)^{\frac{r-\ell-1}{2}}\cdot\exp\left(-\frac{\ell}{r}\right)\\
&\leq (1.01\cdot 1.06)\cdot \exp\left(-\frac{r-\ell-1}{r}\right)\exp\left(-\frac{\ell}{r}\right)\\
&=1.01\cdot 1.06\cdot\exp\left(-\frac{r-1}{r}\right)\leq 1.01\cdot 1.06\cdot\exp\left(-\frac{2}{3}\right) < 0.55 < 1-1.06e^{-1},
\end{align*}
where the first inequality holds because of Proposition \ref{prop:basic_ineq} (b) and Proposition \ref{lem:Stirling}, the second inequality holds since $\prod_{i=\ell+1}^r a_i\geq k-r+\ell$, and the third inequality holds because of Proposition \ref{prop:basic_ineq} (d) (by taking $m=r-\ell-1 \leq r-2$).\footnote{Note that if $r-\ell-1=0$, then the third inequality holds trivially.}
Hence, $g_r(a_1,\ldots,a_r) < f(k)$ whenever $a_1=0$.

Now we suppose that $a_1 > 0$. Then $r\leq k-1$ and $1+\sum_{i=2}^r\frac{a_1}{ra_1+1}\cdot\frac{ra_i+1}{a_i+1}\leq r$.
In this case,
\begin{align*}
g_r(a_1,\ldots,a_r)
\leq r\binom{k-1}{a_1,\ldots, a_r}\cdot\prod_{i=1}^r\left [
\left(\frac{ra_i}{r(k-1)}\right)^{a_i}\left(1+\frac{1}{ra_i}\right)^{a_i}\right ]\cdot \left(\frac{r(k-1)}{rk}\right)^{k-1}.
\end{align*}
Following the same discussion as above (e.g., taking $\ell=0$), we have
\begin{equation}\label{estimate_for_gr(1)}
g_r(a_1,\ldots,a_r)\leq(1.01\cdot 1.06\cdot r)\cdot\sqrt{\frac{k-1}{k-r}}\cdot\left(\frac{1}{2\pi}\right)^{\frac{r-1}{2}}.
\end{equation}
If $r=3$, then
$g_r(a_1,\ldots,a_r)\leq(1.01\cdot 1.06)\cdot\sqrt{\frac{9}{10-3}}\cdot\frac{1}{2\pi}\cdot 3 < 0.58 < 1-1.06e^{-1}\leq f(k)$.
If $r\geq 4$, then
\begin{align}\label{estimate_for_gr(2)}
g_r(a_1,\ldots,a_r)&\leq (1.01\cdot 1.06\cdot r)\cdot\sqrt{\frac{k-1}{k-r}}\cdot\left(\frac{1}{2\pi}\right)^{\frac{r-1}{2}}\leq 1.01\cdot 1.06\cdot r^{\frac{3}{2}}\cdot \left(\frac{1}{2\pi}\right)^{\frac{r-1}{2}},
\end{align}
where we use $\sqrt{\frac{k-1}{k-r}}\leq \sqrt{r}$ for $1\leq r\leq k-1$.
Let $h(r)=r^{\frac{3}{2}}(\frac{1}{2\pi})^{\frac{r-1}{2}}$. Then for all $r\geq 4$,
\begin{equation}\label{decrease_of_h}
\frac{h(r+1)}{h(r)}=\left(\frac{r+1}{r}\right)^{\frac{3}{2}}\sqrt{\frac{1}{2\pi}}
\leq \left(\frac{5}{4}\right)^{\frac{3}{2}}\sqrt{\frac{1}{2\pi}}\leq 1,
\end{equation}
implying that $h(r)$ is decreasing.
Then for $r\geq 4$, we can derive from \eqref{estimate_for_gr(2)} that
\begin{align*}
g_r(a_1,\ldots,a_r) &\leq 1.01\cdot 1.06\cdot r^{\frac{3}{2}}\cdot \left(\frac{1}{2\pi}\right)^{\frac{r-1}{2}}
\leq 1.01\cdot 1.06\cdot 4^{\frac{3}{2}}\cdot \left(\frac{1}{2\pi}\right)^{\frac{3}{2}}\leq 0.55 < 1-1.06e^{-1}\leq f(k),
\end{align*}
Now we have showed that $g_r(k)< f(k)$ for $r\geq 3$ and $k\geq 10$, completing the proof.
\end{proof}

Finally, we prove the last case (III) of Lemma~\ref{lem:g<k}, whose proof is a modification of the case (II).

\begin{lem}\label{lem:rk_ineq2}
Suppose that $r\geq 6$ and $k\geq 3$ are positive integers. Then $g_r(k)< f(k)$.
\end{lem}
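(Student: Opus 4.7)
My plan is to mimic the structure of the proof of Lemma~\ref{lem:rk_ineq1} (the case $r\geq 3$ and $k\geq 10$), adjusting only the universal constants to accommodate the smaller values of $k$. Since Lemma~\ref{lem:rk_ineq1} already covers $k\geq 10$, I need only treat $3\leq k\leq 9$ (with $r\geq 6$).

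First I would establish the lower bound on $f(k)$: by \eqref{equ:f>f0} and the monotonicity of $f_0(k)=1-(1-\tfrac1k)^{k-1}$, we have $f(k)\geq f_0(k)\geq f_0(3)=5/9$ for all $k\geq 3$. So it suffices to prove $g_r(a_1,\ldots,a_r)<5/9$ for every $\vec{\mathbf a}\in\mathbb{N}^r_{k-1}$, where $r\geq 6$ and $3\leq k\leq 9$. The two calculations from Lemma~\ref{lem:rk_ineq1} carry over almost verbatim, except that we replace the bound $(1-\tfrac1k)^{k-1}\leq 1.06\,e^{-1}$ from Proposition~\ref{prop:basic_ineq}(b) by the weaker bound $(1-\tfrac1k)^{k-1}\leq 1.21\,e^{-1}$ from Proposition~\ref{prop:basic_ineq}(a), and replace the Stirling constant $1.01$ (valid for $k\geq 9$) by $1.05$ (valid for $k\geq 2$) from Proposition~\ref{lem:Stirling}.

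Next I would run the two cases. In the case $a_1=0$, following exactly the chain of inequalities in Lemma~\ref{lem:rk_ineq1}, the same telescoping yields
\[
g_r(a_1,\ldots,a_r)\;\leq\;1.05\cdot 1.21\cdot \exp\!\left(-\frac{r-1}{r}\right)\;\leq\;1.05\cdot 1.21\cdot \exp\!\left(-\frac{5}{6}\right)\;<\;0.553\;<\;\frac{5}{9},
\]
using $r\geq 6$ in the second inequality. In the case $a_1\geq 1$ (which forces $r\leq k-1$, hence $k\geq 7$), the analogue of \eqref{estimate_for_gr(1)} becomes
\[
g_r(a_1,\ldots,a_r)\;\leq\;1.05\cdot 1.21\cdot r\cdot\sqrt{\frac{k-1}{k-r}}\cdot\left(\frac{1}{2\pi}\right)^{\frac{r-1}{2}}\;\leq\;1.05\cdot 1.21\cdot h(r),
\]
where $h(r)=r^{3/2}(2\pi)^{-(r-1)/2}$ is decreasing for $r\geq 4$ by \eqref{decrease_of_h} and we used $\sqrt{(k-1)/(k-r)}\leq\sqrt{r}$. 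For $r\geq 6$ we get $h(r)\leq h(6)=6^{3/2}(2\pi)^{-5/2}<0.149$, so $g_r(a_1,\ldots,a_r)<0.19<5/9$, which closes the argument.

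The only delicate point is the first case, where the bound $1.05\cdot 1.21\cdot e^{-5/6}\approx 0.552$ sits just below $f_0(3)=5/9\approx 0.5556$; this is why I must use both the sharpest available versions of Proposition~\ref{prop:basic_ineq}(a) and Proposition~\ref{lem:Stirling}, and why the hypothesis $r\geq 6$ cannot be weakened by the present method (indeed, Table~\ref{tab:case5} shows that $g_5(3)\approx 0.538$ is not covered here and the borderline case $r=5$ must be handled by the numerical table in the proof of Theorem~\ref{thm:main2}). Everything else is routine arithmetic that follows the template already laid down in Lemma~\ref{lem:rk_ineq1}.
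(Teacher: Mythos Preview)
Your proposal is correct and follows essentially the same approach as the paper: replace the Stirling constant $1.01$ by $1.05$ and the bound $1.06\,e^{-1}$ by $1.21\,e^{-1}$, then rerun the two cases of Lemma~\ref{lem:rk_ineq1} to obtain $g_r(\vec{\mathbf a})\leq 1.05\cdot 1.21\cdot e^{-5/6}<5/9$ when $a_1=0$ and $g_r(\vec{\mathbf a})\leq 1.05\cdot 1.21\cdot h(6)<5/9$ when $a_1\geq 1$. The only cosmetic difference is that the paper does not bother restricting to $3\leq k\leq 9$, since the adjusted constants already work uniformly for all $k\geq 3$.
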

\begin{proof}
Let $r\geq 6$ and $k\geq 3$.
By Proposition \ref{prop:basic_ineq} (a), we have $\left (1-\frac{1}{k}\right)^{k-1}\leq \frac49\leq 1.21\cdot e^{-1}$
and $f(k)\geq f_0(k)\geq \frac{5}{9} \geq 0.555$.
Also by Proposition~\ref{lem:Stirling}, we have $\frac{(k-1)!}{\sqrt{2\pi (k-1)}\left(\frac{k-1}{e}\right)^{k-1}}\leq 1.05.$
If there exists some $1\leq \ell\leq r-1$ such that $a_1 = \ldots = a_\ell =0$ and $a_{\ell+1}>0$,
then following the same discussion as in the proof of Lemma~\ref{lem:rk_ineq1}, we can derive that
\begin{equation}\label{estimate_for_gr(3)}
g_r(a_1,\ldots,a_r)\leq 1.05\cdot 1.21\cdot \exp\left (-\frac{r-1}{r}\right )
\leq 1.05\cdot 1.21\cdot \exp\left (-\frac{5}{6}\right )\leq 0.553<\frac59\leq f(k).
\end{equation}
Now we may assume that $a_1>0$. Then following the same arguments as for \eqref{estimate_for_gr(2)} and \eqref{decrease_of_h}, we derive
\begin{align*}
g_r(a_1,\ldots,a_r)&\leq 1.05\cdot 1.21 \cdot r^{\frac{3}{2}}\cdot \left(\frac{1}{2\pi}\right)^{\frac{r-1}{2}}\leq  1.05\cdot 1.21\cdot 6^{\frac{3}{2}}\cdot\left(\frac{1}{2\pi}\right)^{\frac{5}{2}}\leq 0.190<\frac{5}{9} \leq f(k),
\end{align*}
where we use the fact that $h(r)=r^{\frac{3}{2}}(\frac{1}{2\pi})^{\frac{r-1}{2}}$ is decreasing for $r\geq 6$.
Putting everything together, we see that $g_r(k)< f(k)$ holds whenever $r\geq 6$ and $k\geq 3$.
\end{proof}

We have completed the proof of Lemma~\ref{lem:g<k}. \qed

\section{Numerical support for Table~\ref{tab:case4} and Table~\ref{tab:case5}}\label{table}
Here we provide numerical support for Table~\ref{tab:case4} and Table~\ref{tab:case5}.

To achieve this, we present Mathematica Codes along with outcomes that can be downloaded from the following link:
\url{http://staff.ustc.edu.cn/~jiema/Color-biasPM-AppendixB.pdf}

\end{document}